\newtheorem{theorem}{Theorem}[section]
\newtheorem{introthm}{Theorem}
\newtheorem{lemma}[theorem]{Lemma}
\newtheorem{proposition}[theorem]{Proposition}
\theoremstyle{definition}
\newtheorem{algorithm}[theorem]{Algorithm}
\newtheorem{example}[theorem]{Example}
\newtheorem{remark}[theorem]{Remark}
\numberwithin{equation}{theorem}
\newcommand{\myplus}{\mkern-5mu+\mkern-5mu}
\newcommand{\mynus}{\mkern-5mu-\mkern-5mu}
\newcommand{\lm}{\mathrm{lm}}
\newcommand{\Cov}{\mathrm{Cov}}
\newcommand{\BigWedge}{\Lambda}
\newcommand{\iE}[1]{\mathtt{#1}}
\newcommand{\iiE}[1]{\boldsymbol{#1}}
\newcommand{\iiiE}[1]{\mathfrak{#1}}
\newcommand{\ivE}[1]{\mathit{#1}}
\newcommand{\onee}[2]{
\draw[line width=0.5pt, draw=black] (#1,0)-- (#1,-0.38);
\node[font=\tiny,align=center] at (#1,-0.5) {$\iE{#2}$};
\draw[black, fill=black] (#1,0) circle [radius=1.5pt];
}
\newcommand{\oneeu}[2]{
\draw[line width=0.5pt, draw=black] (#1,0)-- (#1,0.38);
\node[font=\tiny,align=center] at (#1,0.5) {$\iE{#2}$};
\draw[black, fill=black] (#1,0) circle [radius=1.5pt];
}
\newcommand{\oneel}[2]{
\draw[line width=0.5pt, draw=black] (#1,0)-- (#1-0.35,0);
\node[font=\tiny,align=center] at (#1-0.47,0) {$\iE{#2}$};
\draw[black, fill=black] (#1,0) circle [radius=1.5pt];
}
\newcommand{\twoe}[2]{
\begin{scope}
\clip (#1-0.2,-0.2) rectangle (#1+0.2,0.6);
\draw[line width=0.5pt, draw=black]  (#1,0) .. controls (#1-0.6,0.5) and (#1+0.6,0.5) .. (#1,0) ;
\node[font=\tiny,align=center] at (#1,0.5) {$\iiE{#2}$};
\draw[black, fill=black] (#1,0) circle [radius=1.5pt];
\end{scope}
}
\newcommand{\twostr}[3]{
\begin{scope}
\draw[line width=0.5pt, draw=black]  (0,0) .. controls (#1-#2,0.6) and (#1+#2,0.6) .. (0,0) ;
\node[font=\tiny,align=center] at (#1,0.6) {$\iiE{#3}$};
\draw[black, fill=black] (0,0) circle [radius=1.5pt];
\end{scope}
}
\newcommand{\twoes}[3]{
\begin{scope}
\clip (#1-0.2,-0.2) rectangle (#1+0.2,0.6);
\draw[line width=0.5pt, draw=black]  (#1,0) .. controls (#1-#3,0.5) and (#1+#3,0.5) .. (#1,0) ;
\node[font=\tiny,align=center] at (#1,0.5) {$\iiE{#2}$};
\draw[black, fill=black] (#1,0) circle [radius=1.5pt];
\end{scope}
}
\newcommand{\twoed}[2]{
\begin{scope}
\clip (#1-0.2,0.2) rectangle (#1+0.2,-0.6);
\draw[line width=0.5pt, draw=black]  (#1,0) .. controls (#1-0.6,-0.5) and (#1+0.6,-0.5) .. (#1,0) ;
\node[font=\tiny,align=center] at (#1,-0.5) {$\iiE{#2}$};
\draw[black, fill=black] (#1,0) circle [radius=1.5pt];
\end{scope}
}
\newcommand{\starABg}[2]{
\begin{scope}
\clip (#1-0.2,0.2) rectangle (#2+0.2,-0.6);
\draw[line width=0.5pt, draw=lightgray]  (#1,0) .. controls (#1*0.5+#2*0.5-0.6,-0.5) and (#1*0.5+#2*0.5+0.6,-0.5) .. (#2,0) ;
\draw[line width=0.5pt, draw=lightgray]  (#1*0.5+#2*0.5,0) -- (#1*0.5+#2*0.5,-0.38) ;

\draw[line width=0.5pt, draw=lightgray,fill=lightgray]  (#1*0.5+#2*0.5,-0.38) .. controls  (#1*0.15+#2*0.85,-0.37) .. (#2,0) .. controls  (#1*0.25+#2*0.75,-0.06).. (#1*0.5+#2*0.5,-0.38);

\draw[black, fill=black] (#1,0) circle [radius=1.5pt];
\draw[black, fill=black] (#2,0) circle [radius=1.5pt];
\end{scope}
}
\newcommand{\threABg}[2]{
\begin{scope}
\clip (#1-0.2,0.2) rectangle (#2+0.2,-0.6);
\draw[line width=0.5pt, draw=lightgray]  (#1,0) .. controls (#1*0.5+#2*0.5-0.6,-0.5) and (#1*0.5+#2*0.5+0.6,-0.5) .. (#2,0) ;
\draw[line width=0.5pt, draw=lightgray]  (#1*0.5+#2*0.5,0) -- (#1*0.5+#2*0.5,-0.38) ;

\draw[black, fill=black] (#1,0) circle [radius=1.5pt];
\draw[black, fill=black] (#2,0) circle [radius=1.5pt];
\end{scope}
}
\newcommand{\twoeS}[4]{
\begin{scope}
\clip (#1-0.2,1) rectangle (#2+0.2,-0.6);
\draw[decorate, decoration={coil,aspect=0,segment length=2pt,amplitude=1pt}, line width=0.5pt, draw=black]  (#1,0) .. controls (#1*0.5+#2*0.5-#4,#4) and (#1*0.5+#2*0.5+#4,#4) .. (#2,0) node[font=\tiny, midway, above=2pt] {$\iiE{#3}$};
\draw[black, fill=black] (#1,0) circle [radius=1.5pt];
\draw[black, fill=black] (#2,0) circle [radius=1.5pt];

\end{scope}
}
\newcommand{\twoeSd}[4]{
\begin{scope}
\clip (#1-0.2,0.6) rectangle (#2+0.2,-0.6);
\draw[decorate, decoration={coil,aspect=0,segment length=2pt,amplitude=1pt}, line width=0.5pt, draw=black]  (#1,0) .. controls (#1*0.5+#2*0.5-#4,-#4) and (#1*0.5+#2*0.5+#4,-#4) .. (#2,0) node[font=\tiny,midway, below=2.5pt]  {$\iiE{#3}$};
\draw[black, fill=black] (#1,0) circle [radius=1.5pt];
\draw[black, fill=black] (#2,0) circle [radius=1.5pt];
\end{scope}
}
\newcommand{\twoeABS}[3]{
\draw[line width=0.5pt, draw=black]  (#1,0) -- (#2,0) ;
\node[font=\tiny,align=center] at (#1*0.5+#2*0.5,0.1) {$\iiE{#3}$};
\draw[black, fill=black] (#1,0) circle [radius=1.5pt];
\draw[black, fill=black] (#2,0) circle [radius=1.5pt];
}
\newcommand{\twoeARB}[6]{
\draw[line width=0.5pt, draw=black]  (#1,#2) -- (#3,#4) node[font=\tiny, midway, #6] {$\iiE{#5}$} ;
\draw[black, fill=black] (#1,#2) circle [radius=1.5pt];
\draw[black, fill=black] (#3,#4) circle [radius=1.5pt];
}
\newcommand{\thre}[2]{
\begin{scope}
\clip (#1-0.2,-0.2) rectangle (#1+0.2,0.6);
\draw[line width=0.5pt, draw=black] (#1,0)-- (#1,0.38);
\draw[line width=0.5pt, draw=black]  (#1,0) .. controls (#1-0.6,0.5) and (#1+0.6,0.5) .. (#1,0) ;
\node[font=\tiny,align=center] at (#1,0.5) {$\iiiE{#2}$};
\draw[black, fill=black] (#1,0) circle [radius=1.5pt];
\end{scope}
}
\newcommand{\thregs}[2]{
\begin{scope}
\clip (#1-0.2,-0.2) rectangle (#1+0.2,0.6);
\draw[line width=0.5pt, draw=lightgray]  (#1,0) .. controls (#1-#2,0.5) and (#1+#2,0.5) .. (#1,0) ;
\draw[black, fill=black] (#1,0) circle [radius=1.5pt];
\end{scope}
}
\newcommand{\thred}[2]{
\begin{scope}
\clip (#1-0.2,#2+0.2) rectangle (#1+0.2,#2-0.5);
\draw[line width=0.5pt, draw=black] (#1,#2)-- (#1,#2-0.38);
\draw[line width=0.5pt, draw=black]  (#1,#2) .. controls (#1-0.6,#2-0.5) and (#1+0.6,#2-0.5) .. (#1,#2) ;
\draw[black, fill=black] (#1,#2) circle [radius=1.5pt];
\end{scope}
}
\newcommand{\threu}[2]{
\begin{scope}
\clip (#1-0.2,#2-0.2) rectangle (#1+0.2,#2+0.5);
\draw[line width=0.5pt, draw=black] (#1,#2)-- (#1,#2+0.38);
\draw[line width=0.5pt, draw=black]  (#1,#2) .. controls (#1-0.6,#2+0.5) and (#1+0.6,#2+0.5) .. (#1,#2) ;
\draw[black, fill=black] (#1,#2) circle [radius=1.5pt];
\end{scope}
}
\newcommand{\thretr}[3]{
\begin{scope}
\clip (#1-0.2,#2-0.2) rectangle (#1+0.6,#2+0.2);
\draw[line width=0.5pt, draw=black]  (#1,#2) .. controls (#1+0.5,#2-0.6) and (#1+0.5,#2+0.6) .. (#1,#2) ;
\node[font=\tiny,align=center] at (#1+0.5,#2) {$\iiiE{#3}$};
\draw[black, fill=black] (#1,#2) circle [radius=1.5pt];
\draw[line width=0.5pt, draw=black] (#1,#2)-- (#1+0.38,#2);
\end{scope}
}
\newcommand{\thretl}[3]{
\begin{scope}
\clip (#1-0.6,#2-0.2) rectangle (#1+0.2,#2+0.2);
\draw[line width=0.5pt, draw=black]  (#1,#2) .. controls (#1-0.5,#2-0.6) and (#1-0.5,#2+0.6) .. (#1,#2) ;
\node[font=\tiny,align=center] at (#1-0.5,#2) {$\iiiE{#3}$};
\draw[black, fill=black] (#1,#2) circle [radius=1.5pt];
\draw[line width=0.5pt, draw=black] (#1,#2)-- (#1-0.38,#2);
\end{scope}
}
\newcommand{\threAB}[3]{
\begin{scope}
\clip (#1-0.2,0.2) rectangle (#2+0.2,-0.6);
\draw[line width=0.5pt, draw=black] (#2,0)-- (#2,-0.37);
\draw[line width=0.5pt, draw=black]  (#1,0) .. controls (#2-0.8,-0.5) and (#2+0.6,-0.5) .. (#2,0) ;
\node[font=\tiny,align=center] at (#2,-0.5) {$\iiiE{#3}$};
\draw[black, fill=black] (#1,0) circle [radius=1.5pt];
\draw[black, fill=black] (#2,0) circle [radius=1.5pt];
\end{scope}
}
\newcommand{\threABC}[7]{
\draw[line width=0.5pt, draw=black] (#1,#2)-- (#1*0.3333+#3*0.3333+#5*0.3333,#2*0.3333+#4*0.3333+#6*0.3333);
\draw[line width=0.5pt, draw=black] (#3,#4)-- (#1*0.3333+#3*0.3333+#5*0.3333,#2*0.3333+#4*0.3333+#6*0.3333);
\draw[line width=0.5pt, draw=black] (#5,#6)-- (#1*0.3333+#3*0.3333+#5*0.3333,#2*0.3333+#4*0.3333+#6*0.3333)  	;
\draw[black, fill=black] (#1,#2) circle [radius=1.5pt];
\draw[black, fill=black] (#3,#4) circle [radius=1.5pt];
\draw[black, fill=black] (#5,#6) circle [radius=1.5pt];
}
\newcommand{\threABARB}[7]{
\draw[line width=0.5pt, draw=black] (#1,#2)--(#3,#4)  node[font=\tiny, midway, above=3pt] {$\iiiE{#5}$};
\draw[line width=0.5pt, draw=black]  (#1*0.5+#3*0.5,#2*0.5+#4*0.5).. controls (#1*0.5+#3*0.5+#6,#2*0.5+#4*0.5+#7)..(#3,#4);
\draw[black, fill=black] (#1,#2) circle [radius=1.5pt];
\draw[black, fill=black] (#3,#4) circle [radius=1.5pt];
}
\newcommand{\threABS}[3]{
\begin{scope}
\clip (#1-0.2,0.5) rectangle (#2+0.2,-0.5);
\draw[line width=0.5pt, draw=black] (#1,0)-- (#2,0);
\draw[line width=0.5pt, draw=black]  (#1*0.5+#2*0.5,0) .. controls (#1*0.25+#2*0.75-0.1,-0.15) and (#1*0.25+#2*0.75+0.1,-0.15) .. (#2,0);
\node[font=\tiny,align=center] at (#1*0.5+#2*0.5,0.1) {$\iiiE{#3}$};
\draw[black, fill=black] (#1,0) circle [radius=1.5pt];
\draw[black, fill=black] (#2,0) circle [radius=1.5pt];
\end{scope}
}
\newcommand{\threBA}[3]{
\begin{scope}
\clip (#1-0.2,0.2) rectangle (#2+0.2,-0.6);
\draw[line width=0.5pt, draw=black] (#1,0)-- (#1,-0.38);
\draw[line width=0.5pt, draw=black]  (#1,0) .. controls (#1-0.6,-0.5) and (#1+0.6,-0.5) .. (#2,0) ;
\node[font=\tiny,align=center] at (#1,-0.5) {$\iiiE{#3}$};
\draw[black, fill=black] (#1,0) circle [radius=1.5pt];
\draw[black, fill=black] (#2,0) circle [radius=1.5pt];
\end{scope}
}
\newcommand{\threBAu}[3]{
\begin{scope}
\clip (#1-0.2,-0.2) rectangle (#2+0.2,0.6);
\draw[line width=0.5pt, draw=black] (#1,0)-- (#1,0.37);
\draw[line width=0.5pt, draw=black]  (#1,0) .. controls (#1-0.6,0.5) and (#1+0.8,0.5) .. (#2,0) ;
\node[font=\tiny,align=center] at (#1,0.5) {$\iiiE{#3}$};
\draw[black, fill=black] (#1,0) circle [radius=1.5pt];
\draw[black, fill=black] (#2,0) circle [radius=1.5pt];
\end{scope}
}
\newcommand{\foureu}[3]{
\begin{scope}
\clip (#1-0.2,#2-0.2) rectangle (#1+0.2,#2+0.6);
\draw[line width=0.5pt, draw=black]  (#1,#2) .. controls (#1-0.6,#2+0.5) and (#1+0.6,#2+0.5) .. (#1,#2) node[font=\tiny, midway, above=1pt] {$\ivE{#3}$};
\draw[line width=0.5pt, draw=black]  (#1,#2) .. controls (#1-0.3,#2+0.5) and (#1+0.3,#2+0.5) .. (#1,#2);
\draw[black, fill=black] (#1,#2) circle [radius=1.5pt];
\end{scope}
}
\newcommand{\sixeu}[3]{
\begin{scope}
\clip (#1-0.2,#2-0.2) rectangle (#1+0.2,#2+0.6);
\draw[line width=0.5pt, draw=black]  (#1,#2) .. controls (#1-0.6,#2+0.5) and (#1+0.6,#2+0.5) .. (#1,#2) node[font=\tiny, midway, above=1pt] {$\ivE{#3}$};
\draw[line width=0.5pt, draw=black]  (#1,#2) .. controls (#1-0.4,#2+0.5) and (#1+0.4,#2+0.5) .. (#1,#2);
\draw[line width=0.5pt, draw=black]  (#1,#2) .. controls (#1-0.2,#2+0.5) and (#1+0.2,#2+0.5) .. (#1,#2);
\draw[black, fill=black] (#1,#2) circle [radius=1.5pt];
\end{scope}
}
\newcommand{\staru}[3]{
\begin{scope}
\clip (#1-0.2,#2-0.2) rectangle (#1+0.2,#2+0.6);
\draw[line width=0.5pt, draw=black,fill=black]  (#1,#2) .. controls (#1-0.6,#2+0.5) and (#1+0.6,#2+0.5) .. (#1,#2);
\node[font=\tiny,align=center] at (#1,#2+0.5) {$\ivE{#3}$};
\draw[black, fill=black] (#1,#2) circle [radius=1.5pt];
\end{scope}
}
\newcommand{\starug}[2]{
\begin{scope}
\clip (#1-0.2,#2-0.2) rectangle (#1+0.2,#2+0.6);
\draw[line width=0.5pt, draw=lightgray,fill=lightgray]  (#1,#2) .. controls (#1-0.6,#2+0.5) and (#1+0.6,#2+0.5) .. (#1,#2) ;
\draw[black, fill=black] (#1,#2) circle [radius=1.5pt];
\end{scope}
}
\newcommand{\starugd}[3]{
\begin{scope}
\clip (#1-0.2,#2+0.2) rectangle (#1+0.2,#2-0.6);
\draw[line width=0.5pt, draw=lightgray,fill=lightgray]  (#1,#2) .. controls (#1-#3,#2-0.5) and (#1+#3,#2-0.5) .. (#1,#2) ;
\draw[black, fill=black] (#1,#2) circle [radius=1.5pt];
\end{scope}
}
\newcommand{\gS}[1]{
\begin{tikzpicture}[baseline=(A),outer sep=0pt,inner sep=-3pt]
\node (A) at (0,0) {$#1$};
\end{tikzpicture}
}
\def\SL{{\rm SL}}
\def\GL{{\rm GL}}
\def\RR{{\mathbb R}}
\def\PP{{\mathbb P}}
\def\CC{{\mathbb C}}
\def\ZZ{{\mathbb Z}}
\def\NN{{\mathbb N}}
\title[Representations of $\SL_n$ with complete intersection invariant ring]{Completing the classification of representations of $\SL_n$ with complete intersection invariant ring}
\author[Lukas Braun]{Lukas Braun}
\address{Mathematisches Institut, Universit\"at T\"ubingen,
Auf der Morgenstelle 10, 72076 T\"ubingen, Germany}
\email{braun@math.uni-tuebingen.de}
\subjclass[2010]{13A50, 20G20, 13P10, 14Q15}
\keywords{Special linear group, invariant ring, complete intersection, Gr\"obner basis}
\begin{document}

\begin{abstract}
We present a full list of all representations of the special linear group $\SL_n$ over the complex numbers  with complete intersection invariant ring, completing the classification of Shmelkin. For this task, we combine three techniques. Firstly,  the graph method for invariants of $\SL_n$ developed by the author to compute invariants, covariants and explicit forms of syzygies. Secondly, a new  algorithm for finding a monomial order such that a certain basis of an ideal is a Gr\"obner basis with respect to this order, inbetween usual Gr\"obner basis computation and computation of the Gr\"obner fan. Lastly,   a modification of an algorithm by Xin for MacMahon partition analysis to compute Hilbert series.
\end{abstract}

\maketitle

\tableofcontents

\section*{Introduction}

Let $A=\CC[x_1,\ldots,x_n]/I$ be an affine algebra. We say that $A$ is regular if it is isomorphic to some polynomial ring, a hypersurface if $I$ is a principal ideal, a \emph{complete intersection} if   $I$ is minimally generated by $\mathrm{hd}(A)=n-\mathrm{dim}(A)$ elements, where $\mathrm{dim}(A)$  is the Krull and $\mathrm{hd}(A)$ the homological dimension of $A$.

Now let $\phi: \SL_n \to \mathrm{SL}(W)$ be a finite-dimensional representation of the special linear group $\SL_n$. We denote by $\CC[W]^{\SL_n}$ the \emph{ring of invariants} of this representation. It is a long-standing task in invariant theory to determine these representations that have a regular, hypersurface (hs.) or complete intersection (ci.) ring of invariants respectively.  Representations of connected simple groups with regular ring of invariants have been classified in~\cite{kacpopvin, adgol, schwarzrepreg}, while \emph{irreducible} representations with ci. invariant rings can be found in~\cite{nakaji}. \emph{Reducible} representations of $\SL_2$ with ci. invariant ring  are classically known, see~\cite{gryo}, while those for arbitrary  $\SL_n$ have been classified by Shmelkin in~\cite{shmel} - all but six cases, for which it is still open if they are ci. or not.

The task of the present paper is to address these remaining cases, three single ones and three series to be exact. Let $V$ always denote the standard representation of $\SL_n$ and  by $\BigWedge^k$, $S^k$, $\mathrm{Ad}$ and $S_\lambda$ denote the $k$-th exterior power, $k$-th symmetric power, adjoint representation and Schur module for a vector $\lambda$ of natural numbers respectively. We denote reducible representations by sums of these symbols and the dual by a starred version respectively. The six left open cases are:
$$
(\SL_5, 2\BigWedge^2+\BigWedge^3+V^*),
\quad
(\SL_5, 3\BigWedge^2+V^*),
\quad
(\SL_7, 3V+\BigWedge^3+V^*),
$$
$$
(\SL_{n}, 2\BigWedge^2+4V^*),
\quad
(\SL_{n}, V+2\BigWedge^2+3V^*),
\quad
(\SL_{n}, S^2+\BigWedge^2+2V^*),
\quad
5 \leq n   \in 2\ZZ+1.
$$
We show in Sections~\ref{sec:single} and~\ref{sec:serial} that all six representations have complete intersection invariant rings. Combining this with the results of Shmelkin~\cite[Thms. 0.1, 0.2]{shmel}, we get the following compendium of ci. representations.

\begin{introthm}
A representation of $\SL_n$ has a non-regular but complete intersection invariant ring if and only if it or its dual is contained in the following table. If the invariant ring is not a hypersurface, we note its homological dimension by a boldface subscript.
\begin{center}
\renewcommand{\arraystretch}{1.1}
\begin{tabular*}{\textwidth}{c@{\extracolsep{\fill}}c@{\extracolsep{\fill}}c@{\extracolsep{\fill}}c@{\extracolsep{\fill}}c@{\extracolsep{\fill}}c@{\extracolsep{\fill}}c}
\toprule[0.1em]
$\SL_2$ & $\SL_3$ & $\SL_4$ & $\SL_5$ & $\SL_6$ & $\SL_7$  \\
\midrule
$4V$ & $2S^2\myplus (S^2)^*$ & $S^3$ & $V\myplus 3\BigWedge^2$ & $V\myplus V^*\myplus \BigWedge^2\myplus \BigWedge^3$ & $3V\myplus \BigWedge^3$ &
\\ \addlinespace[1ex]
$3S^2$ & $S^2 \myplus \mathrm{Ad}$ & $6\BigWedge^2$ & $ 3V^*\myplus 2\BigWedge^2$ & $2V^*\myplus \BigWedge^2\myplus \BigWedge^3$ & $2V\myplus 2V^*\myplus \BigWedge^3$ &
\\ \addlinespace[1ex]
$V\myplus 2S^2$ & $V\myplus S^3$ & $V\myplus 4\BigWedge^2$ & $V\myplus 2\BigWedge^2\myplus \BigWedge^3$ & ${2V\myplus \BigWedge^2\myplus \BigWedge^3}_{\iiE{2}}$ & $V\myplus 3V^*\myplus \BigWedge^3$ &
\\ \addlinespace[1ex]
$2V\myplus S^2$ & $V^*\myplus S^3$ & $2V\myplus 3\BigWedge^2$ & ${V^* \myplus  3\BigWedge^2}_{\iiE{2}}$ & ${3V\myplus 3V^*\myplus \BigWedge^3}_{\iiE{2}}$ & $4V^*\myplus \BigWedge^3$ &
\\ \addlinespace[1ex]
$V\myplus S^3$ & $3S^2$ & $V\myplus V^*\myplus 3\BigWedge^2$ & ${V^* \myplus  2\BigWedge^2\myplus \BigWedge^3}_{\iiE{3}}$ & ${4V\myplus V^*\myplus \BigWedge^3}_{\iiE{2}}$ & $\BigWedge^2 \myplus \BigWedge^3$ &
\\ \addlinespace[1ex]
$S^2\myplus S^3$ & $2\mathrm{Ad}$ & $V\myplus S_{(2,2)}$ &
${4V^*+2\BigWedge^2}_{\iiE{4}}$
&  ${4V\myplus 2V^*\myplus \BigWedge^3}_{\iiE{4}}$ & $\BigWedge^3 \myplus \BigWedge^5$ &

\\ \addlinespace[1ex]
$V\myplus S^4$ &&         $\BigWedge^2\myplus S_{(2,2)}$ & & & ${3V\myplus V^*\myplus \BigWedge^3}_{\iiE{2}}$ &
\\ \addlinespace[1ex]
$S^2\myplus S^4$ && ${3\BigWedge^2\myplus S^2}_{\iiE{2}}$ &&& 
\\\cmidrule{6-6}
$2S^4$ &&    ${2\BigWedge^2\myplus \mathrm{Ad}}_{\iiE{3}}$ &&&
$\SL_8$
\\\cmidrule{6-6}
$S^5$ &&
${2V \myplus 2V^* \myplus 2\BigWedge^2}_{\iiE{2}}$
&&&
$2V \myplus \BigWedge^3$ 
\\ \addlinespace[1ex]

$S^6$ &&
${3V \myplus V^* \myplus 2\BigWedge^2}_{\iiE{2}}$
&&&
$V \myplus V^* \myplus \BigWedge^3$
\\ \addlinespace[1ex]
${2S^3}_{\iiE{2}}$ &&
${4V   \myplus 2\BigWedge^2}_{\iiE{3}}$
&&&
$2V^* \myplus \BigWedge^3$
\\ \addlinespace[1ex]
\toprule[0.1em]
\end{tabular*}
\begin{tabular*}{\textwidth}{c@{\extracolsep{\fill}}c@{\extracolsep{\fill}}c}
 \multicolumn{3}{c}{$\SL_n$}
 \\
 \midrule
$n\geq 3$ & $n \geq 4 $ & $n \geq 5$
 \\
\midrule
$nV \myplus nV^*$
&
$nV^* \myplus \BigWedge^2, n \mathrm{~even}$
&
$
{kV \myplus lV^* \myplus 2\BigWedge^2}_{\iiE{\mathrm{max}(\lfloor \frac{n}{2} \rfloor,n-l-1)}}, 
$
\\ \addlinespace[1ex]
$nV^* \myplus S^2$
&
$V \myplus nV^* \myplus \BigWedge^2$
&
$k+l=4, l \leq n-2.$
\\ \addlinespace[1ex]
$V \myplus (n \mynus 1)V^* \myplus S^2$
&
$3V \myplus (n \mynus 1)V^* \myplus \BigWedge^2$
&
${4V+\BigWedge^2 \myplus (\BigWedge^2)^*}_{\iiE{\frac{n+2}{2}}}, n \mathrm{~even}$
\\ \addlinespace[1ex]
$2V \myplus rV^*  \myplus S^2, r\leq n \mynus 3$
&
$4V \myplus rV^* \myplus \BigWedge^2, r \leq n \mynus 3$
&
${3V \myplus V^* \myplus \BigWedge^2 \myplus (\BigWedge^2)^*}_{\iiE{\lceil\frac{n}{2}}\rceil}$
\\ \addlinespace[1ex]
$V \myplus 2S^2$
&
${2V \myplus nV^* \myplus \BigWedge^2}_{\iiE{2}}$
&
${2V \myplus 2V^* \myplus \BigWedge^2 \myplus (\BigWedge^2)^*}_{\iiE{\lfloor\frac{n}{2}}\rfloor}$
\\ \addlinespace[1ex]
$V^* \myplus 2S^2$
&
${4V \myplus (n \mynus 2)V^* \myplus \BigWedge^2}_{\iiE{2}}$
&
$
{2V \myplus  S^2 \myplus (\BigWedge^2)^*}_{\iiE{\frac{n+2}{2}}}, n \mathrm{~even}
$
\\ \addlinespace[1ex]
 $V \myplus S^2 \myplus (S^2)^*$
 &
 $
 {2V \myplus \BigWedge^2 \myplus S^2}_{\iiE{n \mynus 1}}
 $
 &
 $
{ V \myplus V^* \myplus S^2 \myplus (\BigWedge^2)^*}_{\iiE{\lceil\frac{n}{2}\rceil}}
$
 \\ \addlinespace[1ex]
  $V \myplus V^* \myplus \mathrm{Ad}$
  &
   $
 {V \myplus V^* \myplus \BigWedge^2 \myplus S^2}_{\iiE{n \mynus 2}}
 $
 &
  $
{ 2V^* \myplus S^2 \myplus (\BigWedge^2)^*}_{\iiE{\lfloor\frac{n}{2}\rfloor}}
$
  \\ \addlinespace[1ex]
  ${2V \myplus (n \mynus 2) V^* + S^2}_{\iiE{2}}$
  &
    $
 {2V^* \myplus \BigWedge^2 \myplus S^2}_{\iiE{\mathrm{max}(2,n \mynus 3)}}
 $
 &
 $V \myplus S^2 \myplus (\BigWedge^2)^*$
  \\ \addlinespace[1ex]
  ${(n \myplus 1)V \myplus r V^*}_{\iiE{r}}, r \leq n$
  &
  &
   $3V \myplus \BigWedge^2 \myplus (\BigWedge^2)^*$
   \\ \addlinespace[1ex]
   \bottomrule[0.1em]
\end{tabular*}
\end{center}
\end{introthm}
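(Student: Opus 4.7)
The plan is to reduce the statement to Shmelkin's classification and to dispose of the six residual cases by direct computation. By \cite[Thms.~0.1 and 0.2]{shmel} every representation of $\SL_n$ whose invariant ring is a non-regular complete intersection is already recorded in the table above, with the sole exception of the three isolated cases for $\SL_5$ and $\SL_7$ and the three serial cases for odd $n\geq 5$ listed in the introduction. It therefore suffices to prove that each of these six residual representations has a complete intersection invariant ring and to identify the corresponding homological dimension.

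For a fixed representation the verification would proceed in four steps. First, I would use the modification of Xin's algorithm for MacMahon partition analysis to compute the Hilbert series $H(t)$ of $\CC[W]^{\SL_n}$; from its pole order at $t=1$ one reads off the Krull dimension $d$, and once a candidate presentation is found the numerator pins down the expected number of relations. Second, the graph method for $\SL_n$ is used to write down an explicit generating system $f_1,\dots,f_m$ together with a list of candidate syzygies $r_1,\dots,r_s$ obtained by contracting the same graph in two different ways. Third, these syzygies are shown to generate the full ideal of relations by exhibiting a Gr\"obner basis: rather than invoking an off-the-shelf term order, one applies the new algorithm that sits inbetween plain Gr\"obner basis computation and full Gr\"obner fan computation, searching directly for a monomial order under which the syzygies themselves form a Gr\"obner basis. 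Finally, comparing $s=m-d$ with the Hilbert series datum yields the complete intersection property and the stated homological dimension.

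The genuine obstacle lies in the three serial cases, where everything must be carried out uniformly in the odd parameter $n\geq 5$. In fixed-dimensional examples the final Gr\"obner computation can be delegated to a computer algebra system, but here both the generators and the syzygies arise as families of covariants indexed by $n$, and the monomial order produced by the new algorithm must itself depend on $n$ in a controlled fashion. The graph method is decisive at precisely this point, because $S$-polynomial reductions translate into combinatorial identities between graphs that are valid independently of $n$, so that the verification of the Gr\"obner property can be carried out once and for all for the whole series. Once these uniform Gr\"obner bases are in place, the Hilbert series computations for the series yield closed formulas in $n$ from which the homological dimensions $\mathrm{max}(\lfloor n/2\rfloor,n-l-1)$, $\lceil n/2\rceil$, $\lfloor n/2\rfloor$, $(n+2)/2$ recorded in the last column of the table can simply be read off, completing the classification.
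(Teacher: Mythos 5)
Your reduction to Shmelkin's theorems and your four-step scheme for a fixed representation do match the paper's strategy for the isolated cases (with the caveat that for $(\SL_7,3V+\BigWedge^3+V^*)$ a direct partition-analysis computation of the Hilbert series is out of reach; the paper instead restricts to a slice $3V+z+\BigWedge^6$ with isotropy $G_2$ and compares against Schwarz's Hilbert series of $\CC\left[4\CC^7\right]^{G_2}$ \cite{schwarz,shmel2}). The genuine gap is in the serial cases, and it sits exactly at the point you yourself flag as the main obstacle: your plan never says how the Hilbert series --- the yardstick against which the candidate presentation must be checked --- is to be obtained \emph{uniformly in $n$}. MacMahon partition analysis is a fixed-dimension computation and cannot by itself produce a closed formula in the parameter; and without that target, exhibiting $m-d$ syzygies that form a Gr\"obner basis of the ideal they generate does not show that they generate the whole ideal of relations (a complete intersection quotient of the right dimension could still have extra components). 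The paper closes this with Panyushev's deformation theorem \cite{panyushev}: $\CC[X+Y]^{\SL_n}$ and $(\CC[X]^U\otimes\CC[Y]^{U^{o}})^T$ have the same multigraded Hilbert series, and the latter algebras $A_i$ are computed explicitly for all odd $n$ from the covariant rings $\Cov(\SL_n,V_i)$ determined in Subsection~\ref{subs:cov} via Brion's tables. This step is indispensable for a second reason: the almost-toric contraction $A_i$ supplies not only the multidegrees but also the binomial skeletons of the syzygies, which is precisely what tells you which monomials to target as leading terms when running the order-finding algorithm.

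A smaller but substantive correction: the uniform-in-$n$ verification of the Gr\"obner property is \emph{not} achieved by translating $S$-polynomial reductions into graph identities, as you propose. The graph calculus is used to construct the explicit syzygies $\mathfrak{f}_k$, $\mathfrak{g}_k$ in the first place; the Crosshair-sieve algorithm~\ref{algCS} then produces a term order under which the chosen leading monomials are pairwise without common variables, so that every $S$-polynomial reduces to zero by Buchberger's first criterion and no reduction computation is needed at all. The inductive argument in $p$ concerns the termination of that order-finding procedure, not graph-theoretic cancellations among $S$-polynomials.
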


For most non-serial cases from~\cite{shmel}, generators and at least multidegrees of syzygies are given there. Generators and syzygies for the six left open cases can be found in Propositions~\ref{prop:U1}-\ref{prop:U3},~\ref{prop:W1},~\ref{prop:W2},~\ref{prop:W3} of the present work and the respective proofs. With the graph techniques from these proofs, it is moreover possible to determine explicit forms of syzygies for the serial cases from~\cite{shmel}.

The setup of the paper is as follows. In Section~\ref{sec:tech}, we discuss the three main techniques involved, which are the graph approach for invariants of $\SL_n$ developed by the author in~\cite{braun}, a new algorithm for determining monomial orderings for a certain \emph{desired} Gr\"obner basis (with \emph{desired} leading terms)  of an ideal, and a slight modification of an algorithm by Xin~\cite{Xin} for MacMahon partition analysis.
We want to stress that all three techniques are applicable in greater generality and that the present work may serve as a blueprint for such applications.

Section~\ref{sec:single} covers the three single cases while the main focus lies on the serial cases in Section~\ref{sec:serial}, where we compute the invariant rings in terms of generators and syzygies. This is done with the help of covariant rings of subrepresentations and toric invariant rings thereof. The resulting algebras $A$ are 'almost' toric - the ideal of syzygies being generated by binomials as well as trinomials, such that a torus  of dimension $\mathrm{dim}(A)-1$ acts on them, compare~\cite{ltpticr}. These almost toric algebras are contractions (i.e. special fibers of degenerations) of the original invariant rings and thus provide information on multidegrees of generators and syzygies. See~\cite{HMSV} for a related approach that uses both a graphical method for the invariants and a - toric - degeneration of the invariant ring. 

\subsection*{Acknowledgements}
The author wishes to thank Dmitri Shmelkin for pointing out his classification to him.

\section{The basic techniques}\label{sec:tech}

In the following, three different techniques are presented: the graph approach for $\SL_n$-invariants from~\cite{braun}, a Gr\"obner basis algorithm for finding suitable monomial orderings and a modification of an algorithm for constant term evaluation of rational functions, see~\cite{Xin} for the original algorithm.
In the first case, the work~\cite{braun} is exhaustive in the sense that it discusses techniques for finding generators and syzygies for $\SL_n$-invariant rings of antisymmetric tensors. What is new in the present work is that we include symmetric tensors in one case and also \emph{covariants}. Another application of the graph method to covariants in the case of $\SL_4$ can be found in the work~\cite{sl4cov} by the author. 

\subsection{The graph method for (anti-)symmetric tensors}

The following is merely a summary of~\cite[Sec. 2, 3]{braun}, while the basis for the methods therein are the (skew) brackets from~\cite{GRS}.
Both should be considered for a deeper discussion. 
Let $V$ denote the standard representation of $\SL_n$ with standard basis $(e_i)$ and dual basis $(e_i^*)$. For nonnegative $i,j, \iota, \gamma$ we denote $\BigWedge_{i,j}:=\BigWedge^i (V)$ and  $S_{\iota,\gamma}:=S^\iota (V)$. Let also $n_i, m_\iota$ be nonnegative, then we have a representation
$$
W:=\bigoplus_i \left( \bigoplus_{j=1}^{n_i} \BigWedge_{i,j} \right) \oplus \bigoplus_\iota \left( \bigoplus_{\gamma=1}^{m_\iota} S_{\iota,\gamma} \right).
$$
To a subspace $\BigWedge_{i,j}$, $S_{\iota,\gamma}$ of $W$ associate an infinite supply of letters $a_{ijk}$, $b_{\iota \gamma \kappa}$ respectively. A \emph{bracket}, denoted by $[*]$, contains $n$ of these letters. We consider the free algebra generated by these brackets, with respect to the following relations, and denote the resulting algebra by $\mathrm{Bra}$.
\begin{enumerate}
\item Inside a bracket, the letters behave commutatively for all combinations but two letters of type $b$, which behave anticommutatively, i.e.
$$
[*a_{ijk}a_{i'j'k'}*]=[*a_{i'j'k'}a_{ijk}*],
\quad [*a_{ijk}b_{\iota \gamma \kappa}*]=[*b_{\iota \gamma \kappa}a_{ijk}*],
$$
$$ [*b_{\iota \gamma \kappa}b_{\iota' \gamma' \kappa'}*]=-[*b_{\iota' \gamma' \kappa'}b_{\iota \gamma \kappa}*].
$$
\item If $n$ is even, $\mathrm{Bra}$ is commutative in the brackets, while it is anticommutative for odd $n$.
\item A monomial in brackets is nonzero only if a letter $a_{ijk}$ ($b_{\iota \gamma \kappa}$) appears either zero or exactly $i$ ($\iota$) times inside the monomial.
\item The Pl\"ucker relations (or \emph{Exchange Lemma}~\cite[p. 60]{GRS}):
$$
\sum_{(u',u'')\vdash u} [u'v][u''w] = (-1)^{n-|w|} \sum_{(v',v'')\vdash v} [v' u] [v'' w],
$$
where the notation $(u',u'')\vdash u$ means that we sum over all decompositions  of $u$ in two subwords $u',u''$ and all occuring summands where a bracket contains more than $n$ letters are set to zero. If $u$ or $v$ contain letters of type $b$, we get different presigns - see~\cite[Prop. 10]{GRS} for the statement in whole generality - but we omit this here for simplicity as it is not needed.
\end{enumerate} 

Observe that the above Pl\"ucker relation does not include signs of the underlying permutations as the standard Pl\"ucker relations do. This is due to the construction in~\cite{GRS}. The signs will appear when we assign polynomials in  $\CC[W]^{\SL_n}$ to the bracket polynomials.

In order to do so, fix a total order on the letters $a_{ijk}$, $b_{\iota \gamma \kappa}$. We have a surjective linear map $U$ from $\mathrm{Bra}$ to $\CC[W]^{\SL_n}$ - called the umbral operator in~\cite{GRS} - defined by linearly extending the following definition for bracket monomials to polynomials.
The image $U(\mathfrak{m})$ of a  bracket monomial $\mathfrak{m}$ is the polynomial mapping an element $\sum t_{i,j} + \sum s_{\iota,\gamma}$ to the following: 
\begin{enumerate}
\item Inside each bracket, arrange letters with respect to the chosen total order.
\item Take the tensor product of one tensor  $\mathrm{det}:=e_1^* \wedge \ldots \wedge e_n^*$ for each bracket and one tensor $t_{i,j}$ ($s_{\iota,\gamma}$) for each letter $a_{ijk}$ ($b_{\iota \gamma \kappa}$) appearing in the monomial.
\item The image under $U(\mathfrak{m})$ is the complete contraction, where the $\nu$-th index of  $t_{i,j}$ ($s_{\iota,\gamma}$) and $\mu$-th index of some $\mathrm{det}$ are contracted if and only if the $\nu$-th appearance of the corresponding $a_{ijk}$ ($b_{\iota \gamma \kappa}$) is at the $\mu$-th  position of the respective bracket.
\end{enumerate}

In~\cite{shmel}, tensor contractions are used to compute invariants without utilizing brackets. Compare also~\cite[Sec. 9.5]{PV}. 
The content of the graph method from~\cite{braun} is  to use an algebra of hypergraphs instead of the bracket algebra $\mathrm{Bra}$ by identifying a bracket monomial with a graph that has a vertex for each bracket and an $i$-hyperedge of \emph{color} $j$ and \emph{shading} $k$ for each letter $a_{ijk}$ that is connected to a vertex if and only if the letter turns up in the corresponding bracket. As the shading only affects the presign, we often omit it and denote only the color of hyperedges. We will also omit the 'hyper' and only speak of graphs and $k$-edges. Moreover, we will often refuse to distinguish between the graph and the associated invariant when the meaning is clear.
A somehow dual approach for invariants of binary forms can be found in~\cite{olvgraph}.

Now we extend the graph method on the one hand to brackets containing letters $b_{\iota \gamma \kappa}$ by assigning jagged $\iota$-edges to such letters. On the other hand, we extend to covariants by adding gray dummy $k$-edges (behaving as if they correspond to tensors $e_1\wedge \ldots \wedge e_k$), compare~\cite{sl4cov, GRS, RS, shmel3} and Subsection~\ref{subs:cov} of Section~\ref{sec:serial}.

Now the Pl\"ucker relation from above becomes a graph relation in the algebra generated by these graphs. We will from now on say that we \emph{apply the Pl\"ucker relation to some edges} if these edges correspond to the word $u$ in the original Pl\"ucker relation for brackets. If it is not clear to which vertex the bracket with the word $v$ corresponds, we say we \emph{pull the edges to the respective vertex}.
A disconnected graph is the  product of its connected components, we say that a graph or a sum of graphs is \emph{reducible}, if it equals a sum of disconnected graphs. We say that two irreducible graphs are \emph{reducibly equivalent}, if their difference is reducible. A set of irreducible graphs if said to be \emph{reducibly independent}, if the only linear combination of them that is reducible is the trivial one. A minimal generating set of the ring of invariants $\CC[W]^{\SL_n}$ is in one-to-one-correspondence to a maximal set of reducibly independent irreducible graphs, see~\cite[Thm. 3.8]{braun}.

The advantage of  the graph method is twofold: firstly, one can use graph theoretical methods and secondly, it is somehow easier to see the consequences of Pl\"ucker relations or how they can be applied reasonably. The following example gives a blueprint of how these advantages interact.

\begin{example}
Let $n=4$ and $n_i,m_\iota=0$ except for $n_2\neq 0$. So we consider invariants of $\SL_4$ acting on antisymmetric two-tensors. The respective graphs have vertices of degree four and only non-jagged $2$-edges (i.e. are truly graphs). We say that an edge is looping if it connects to the same vertex with both ends. 
We want to find a generating set for the associated ring of invariants. Let $\Gamma$ be a graph as above. If $\Gamma$ has a vertex with two looping edges, this constitutes a connected component and corresponds to a generator of $\CC[W]^{\SL_n}$, whatever color the two edges have. So we can assume that $\Gamma$ possesses no such vertex.
If there is a vertex with not even one looping edge, then apply the Pl\"ucker relation on any edge connected with this vertex and pulling the edge to the vertex. This results in $\Gamma$ being equal to a sum of graphs with a looping edge at this very vertex and no looping edges of other vertices being affected. Applying this procedure to every vertex with no looping edge, we can assume that each vertex \emph{has} a looping edge. So by basic graph theory $\Gamma$ equals a sum of cycles of the form
\begin{center}
 \begin{tikzpicture}[baseline=(A),outer sep=0pt,inner sep=0pt]
\node (A) at (0,0) {};
\clip (-0.2,-0.5) rectangle (1.2,0.5);
\draw[line width=0.5pt, draw=black]  (0,0) .. controls (0-0.6,0.5) and (0+0.6,0.5) .. (0,0) ;
\draw[line width=0.5pt, draw=black]  (0.5,0) .. controls (0.5-0.6,0.5) and (0.5+0.6,0.5) .. (0.5,0) ;
\draw[line width=0.5pt, draw=black] (0,0)-- (0.5,0);
\draw[line width=0.5pt, draw=black, dotted] (1,0)-- (0.5,0);
\draw[line width=0.5pt, draw=black]  (0,0) .. controls (0.5-0.6,-0.5) and (0.5+0.6,-0.5) .. (1,0) ;
\draw[line width=0.5pt, draw=black]  (1,0) .. controls (1-0.6,0.5) and (1+0.6,0.5) .. (1,0) ;
\draw[black, fill=black] (0,0) circle [radius=1.5pt];
\draw[black, fill=black] (0.5,0) circle [radius=1.5pt];
\draw[black, fill=black] (1,0) circle [radius=1.5pt];
\end{tikzpicture}
\end{center}

Moreover, we can show by applying the Pl\"ucker relation to a looping edge and a non-looping edge connected to the same vertex, that the negative of the graph $\Gamma'$ that has these two edges interchanged differs from $\Gamma$ by a disconnected graph. Thus when considering a generating set, one can assume that the cycles are antisymmetric in the edges. 

But the generating set consisting of all these cycles is by no means minimal. A minimal generating set only consists of the vertices with two looping edges and the cycles with \emph{three} vertices. This was proven in~\cite[Proof of Prop. 4.1]{braun} by applying Pl\"ucker relations in a way that is somewhat natural in the graph setting and very hard to see working with brackets. Similar techniques are constantly applied in~\cite[Sec. 4-6]{braun}.
\end{example} 

\subsection{The Crosshair-sieve-algorithm}

The Crosshair-sieve-algorithm~\ref{algCS} is developed in Section~\ref{sec:serial} to show that a certain ideal basis is a Gr\"obner basis in fact. We give a general but very rough account of this algorithm here.
It lies  between standard Gr\"obner basis computations with usually a limited amount of available monomial orders and the computation of the Gr\"obner fan, introduced in~\cite{gröbfan}. In particular, if we want to find a Gr\"obner basis with certain good properties (for instance a particular set of leading monomials), then it is in general not very likely that the standard monomial orders such as lexicographic, total degree reverse lexicographic et cetera produce such basis. This problem is adressed by Gr\"obner fan computations, see~\cite{compgröb}. On the other hand, the computation of the whole Gr\"obner fan encoding all possible  Gr\"obner bases of one ideal or a universal - i.e. with respect to all monomial orders - Gr\"obner basis, see~\cite{StuGrö}, can be too complex, see~\cite{beygröb} for such problems in applied Gr\"obner basis theory. This is in particular the case if some parameters are involved as in our three series of invariant rings. Moreover, even the computation of a single Gr\"obner basis for a badly suited monomial order can be too complex in such cases.

So assume we are in the following situation: we have a finite set of polynomials $f_i$ with monomials $m_i$ appearing in the $f_i$ and we want to find a monomial order such that the $f_i$ have leading monomials $\mathrm{lm}(f_i)=m_i$. If for example the $m_i$ pairwise have no variable in common, then the $f_i$ are already a Gr\"obner basis of the ideal generated by themselves. This follows directly from the Buchberger algorithm~\cite[Thm. 3.3]{buchb} as all $S$-polynomials reduce to zero.

Any monomial order on $s$ variables can be expressed as a matrix order, see~\cite{robb, kemp}, with a matrix $\mathcal{M} \in \GL_s(\RR)$ where degrees are given by the first row of $\mathcal{M}$ with ties broken by the second row et cetera. In fact, we only require $\mathcal{M}$ to have $s$ columns, an arbitrary number of rows and rank $s$. What we want to do is building up a matrix $\mathcal{M}$ such that the induced monomial order gives the required leading terms.

\begin{algorithm}[\emph{General Crosshair-sieve}]\label{algCSG}
In order to do so, let $d$ be the maximal total degree of all $f_i$ and denote the variables by $x_1,\ldots,x_s$. Set $x_0:=1$. To each monomial $x_1^{a_1}\cdots x_s^{a_s}$ associate a degree vector $\sum a_j e_j \in \RR^s$. Denote the rows of $\mathcal{M}$ by sums $\mathcal{M}_\nu=\sum b_j e_j^*$. 

 To each row $\mathcal{M}_\nu$ of $\mathcal{M}$ associate a symmetric tensor $S_\nu \in S^d(\RR^{s+1})$, the so called \emph{sieve}, collecting the degrees associated by $\mathcal{M}_\nu$ to all monomials in the $x_i$ of degree $\leq d$. In the entry $(S_\nu)_{i_1,\ldots,i_d}$ stands the degree of $x_{i_1} \cdots x_{i_d}$. 
 
 The degrees of all monomials of one $f_i$ are a collection of entries $(S_\nu)_{i_1,\ldots,i_d}$. Now we 'target' a monomial $m_i$ by increasing the coefficient $b_j$ in  $\mathcal{M}_\nu$ for at least one $x_j$ occuring in $m_i$. We do this for every $m_i$ \emph{in such way that $\mathrm{deg}_\nu(m_i)\geq \mathrm{deg}_\nu(m'_i)$ for all monomials $m'_i$ of $f_i$}.
 It is necessary for the algorithm to work (and for the $m_i$ to be a possible set of leading monomials) that such way exists. This is the case for example if all $f_i$ are homogeneous and we set $b_j=1$ for all $x_j$ occuring in some $m_i$. But this condition is not sufficient of course. At least for one single $i$, we require $\mathrm{deg}_\nu(m_i)$ to be truly greater than $\mathrm{deg}_\nu(m'_i)$ for all other monomials $m'_i$ of $f_i$. We say $m_i$ is \textit{filtered out} by $S_\nu$. Which means we do not have to target this very $m_i$ in $S_{\nu+1}$ any more and thus get less \textit{unwanted interferences of $b_j$} leading to  high degrees of unwanted monomials. It is clear that this algorithm terminates if and only if at each step - for each row of $\mathcal{M}$ - at least one $m_i$ is filtered out.
\end{algorithm}

\begin{remark}
The success of this approach of course relies heavily on the structure of the $f_i$, i.e. how the degrees of their monomials are distributed over $S_\nu$. In practice, for example in invariant theory, the $f_i$ will most likely inherit some symmetry like weighted homogeneity and a symmetric distribution over $S_\nu$. 
\end{remark}

\begin{example}
We give a short account of how such properties fit  together very well in the serial cases considered in Section~\ref{sec:serial}. For details we refer to Algorithm~\ref{algCS}. Due to weighted homogeneity, even if the $m_i$ are not of maximal total degree under all monomials in the $f_i$, there will be some variables occuring in the $m_i$ but not in the $m'_i$ of higher total degree. Targeting these variables at first will filter out monomials of the same total degree as the $m_i$. Now we can arrange the variables in such way that degrees of one $f_i$ lie on counterdiagonals in the $2$-tensors $S_\nu$ (seen as symmetric matrices). Moreover, as we have some freedom in the choice of the $m_i$, we can arrange them on diagonals. Now targeting them by setting $b_j=1$ for each $x_j$ in some $m_i$ that has not yet been filtered out, we see that at least the right- and lowermost entry of $S_\nu$ will be filtered out at each step. The picture of  $S_\nu$ with one $m_i$ targeted  in this way is exactly that of a \emph{crosshair}.
\end{example}

\subsection{Hilbert series by MacMahon partition analysis}

We compute Hilbert series of the three single cases in Section~\ref{sec:single} and of low dimensional representatives of the serial cases in Section~\ref{sec:serial} by the following method. 

According to~\cite[Sec. 4.6]{KD} the (univariate) Hilbert series of the invariant ring of a representation of a reductive algebraic group can be expressed as the \emph{constant term} in $l$ variables $z_1,\ldots,z_l$ of a rational function of the form
$$
\frac{f(z_1,\ldots,z_l)}{(1-f_1(z_1,\ldots,z_l)t)\cdots(1-f_k(z_1,\ldots,z_l)t)}.
$$
It is one of the main objectives of \emph{MacMahon partition analysis} to compute constant terms of so called \emph{Elliot-rational} functions, with various applications and implementations, see~\cite{omega, Xin, Xin2}. Constant terms in more than one variable $z_1,\ldots,z_l$ are treated as iterated constant terms in one variable $z_i$. The constant term can be directly read off if one has a partial fraction decomposition of the rational function. This is the approach of the algorithm $\mathtt{Ell}$ developed in~\cite{Xin} with an implementation in Maple. But there are two bottlenecks: firstly, if one of the denominator factors is not \emph{linear} in $z_i$ and secondly, if two of the denominator factors are not relatively prime. These problems lead to a very significant increase in runtime, which makes it practically impossible to address complicated problems. In theory, the first problem can be solved by introducing roots of unity, the second one by introducing 'slack variables' $s_i$, one for each denominator factor, so that the function will change to
$$
\frac{f(z_1,\ldots,z_l)}{(1-f_1(z_1,\ldots,z_l)ts_1)\cdots(1-f_k(z_1,\ldots,z_l)ts_k)}.
$$
This is the approach of~\cite{Xin2}. But the output of such algorithm is a large sum of rational functions - most of them with poles at some $s_i=1$ - which must be simplified to a single rational function where we can set $s_i=1$. So the problem of computational complexity is only shifted. In~\cite{Xin2}, MacMahon partition analysis then is used again to compute constant terms in $v_i=s_i-1$ for each of the output functions, which may by far be the part of the computation with the highest complexity, see~\cite[Sec. 5.3]{Xin2}. 

In the Hilbert series computations of the present paper, we did not succeed in reasonable time with any of the mentioned algorithms. So we propose the following modified version: assume the only denominator factors that are not relatively prime are equal. Instead of introducing a slack variable for each factor, we introduce \emph{one} slack variable $s$ and take different powers of $s$ for each member of a collection of identical denominator factors. A function of the form
$$
\frac{f}{(1-f_1t)^{n_1}\cdots(1-f_kt)^{n_k}}
$$
will thus become
$$
\frac{f}{(1-f_1t)\cdots(1-f_1ts^{n_1-1})\cdots(1-f_kt)\cdots(1-f_kts^{n_k-1})}.
$$
The output of the algorithm from~\cite{Xin} applied to such function will be a large sum of rational functions with no chance to be simplified by e.g. standard Maple simplification. This is where the second aspect of our modification comes into play: most of the rational functions will have a pole in $s=1$, but of different \emph{order}. Let $c \in \NN$ be the highest pole order. Since we know that the resulting function has no pole at $s=1$, the sum of all functions with a pole of order $c$ must have a pole of order $c-1$ at most. Since it is likely that such sum consists of not nearly as much terms as the whole sum, it might be possible to compute it and thus reduce the highest overall pole order. Iterating this procedure will of course finally result in the sum of all rational functions, our desired output. 

The Maple-worksheets for the Hilbert series computations of the $\SL_5$ representations $2\BigWedge^2+\BigWedge^3+V^*$, $
 3\BigWedge^2+V^*$ and $2\BigWedge^2+4V^*$ will be available as supplementary material. We claim that the described modification is useful for at least problems of certain complexity.
 
 \begin{remark}
 The above algorithms may analogically be used to compute multivariate Hilbert series, i.e. not with respect to total degree but multigradings. As such Hilbert series contain more information, compare Section~\ref{sec:serial}, they may be more useful in some situations than the usual univariate ones, for example if one is searching for generators and relations. 
 But then on the other hand, one has to compute multivariate Hilbert series of ideals given by generators and multidegrees. To our knowledge, there is no such implementation available, though it is analogue to computation of univariate Hilbert series by means of Gr\"obner bases. A basic implementation in Maple, applied to computing the Hilbert series of the algebra $A_3$ from Lemma~\ref{le:W3}, is provided as supplementary material as well.  
 \end{remark}

\section{The single cases}\label{sec:single}

Consider the $\SL_5$-representations 
$
U_1:= 2\BigWedge^2+\BigWedge^3+\BigWedge^4$
,
$U_2:= 3\BigWedge^2+\BigWedge^4$
and the $\SL_7$-representation
$U_3:= 3V+\BigWedge^3+\BigWedge^6$.
These are the three single cases that Shmelkin left open in~\cite{shmel}. We prove that all three are complete intersections in the following.

\subsection{The case $\left(\SL_5, U_1\right)$}

\begin{proposition} \label{prop:U1}
The ring of invariants is a complete intersection of homological dimension two minimally generated by
\begin{center}
\gS{g_{122}=}
\begin{tikzpicture}[baseline=(A),outer sep=0pt,inner sep=0pt]
\node (A) at (0,0) {};
\foureu{0}{0}{1}
\twoeABS{0}{0.7}{1}
\twoe{0.7}{2}
\twoed{0.7}{2}
\end{tikzpicture}
\kern-0.5em
\gS{,}
\
\gS{g_{211}=}
\begin{tikzpicture}[baseline=(A),outer sep=0pt,inner sep=0pt]
\node (A) at (0,0) {};
\foureu{0}{0}{1}
\twoeABS{0}{0.7}{2}
\twoe{0.7}{1}
\twoed{0.7}{1}
\end{tikzpicture}
\kern-0.5em
\gS{,}
\
\gS{g_{11}=}
\begin{tikzpicture}[baseline=(A),outer sep=0pt,inner sep=0pt]
\node (A) at (0,0) {};
\twoe{0}{1}
\twoed{0}{1}
\threABS{0}{0.7}{1}
\thre{0.7}{1};
\end{tikzpicture}
\kern-0.5em
\gS{,}
\
\gS{g_{22}=}
\begin{tikzpicture}[baseline=(A),outer sep=0pt,inner sep=0pt]
\node (A) at (0,0) {};
\twoe{0}{2}
\twoed{0}{2}
\threABS{0}{0.7}{1}
\thre{0.7}{1};
\end{tikzpicture}
\kern-0.5em
\gS{,}
\
\gS{g_{12}=}
\begin{tikzpicture}[baseline=(A),outer sep=0pt,inner sep=0pt]
\node (A) at (0,0) {};
\twoe{0}{1}
\twoed{0}{2}
\threABS{0}{0.7}{1}
\thre{0.7}{1};
\end{tikzpicture}
\kern-0.5em
\gS{,}
\vspace{2pt}
\gS{g=}
\begin{tikzpicture}[baseline=(A),outer sep=0pt,inner sep=0pt]
\node (A) at (0,0) {};
\foureu{0}{0}{1}
\threABS{0}{0.7}{1}
\thre{0.7}{1};
\end{tikzpicture}
\kern-0.5em
\gS{,}
\
\gS{g_{1}=}
\begin{tikzpicture}[baseline=(A),outer sep=0pt,inner sep=0pt]
\node (A) at (0,0) {};
\thre{0}{1}
\twoed{0}{1}
\end{tikzpicture}
\kern-0.5em
\gS{,}
\
\gS{g_{2}=}
\begin{tikzpicture}[baseline=(A),outer sep=0pt,inner sep=0pt]
\node (A) at (0,0) {};
\thre{0}{1}
\twoed{0}{2}
\end{tikzpicture}
\kern-0.5em
\gS{,}
\
\gS{g_{112122}=}
\begin{tikzpicture}[baseline=(A),outer sep=0pt,inner sep=0pt]
\node (A) at (0,0) {};
\twoe{0}{1}
\twoed{0}{1}
\twoeABS{0}{0.7}{2}
\thre{0.7}{1};
\twoeABS{0.7}{1.4}{1}
\twoe{1.4}{2}
\twoed{1.4}{2}
\end{tikzpicture}
\kern-0.5em
\gS{,}
\
\gS{g_{1122}=}
\begin{tikzpicture}[baseline=(A),outer sep=0pt,inner sep=0pt]
\node (A) at (0,0) {};
\foureu{0}{0}{1}
\twoeABS{0}{0.7}{1}
\thre{0.7}{1};
\twoeABS{0.7}{1.4}{1}
\twoe{1.4}{2}
\twoed{1.4}{2}
\end{tikzpicture}
\kern-0.5em
\gS{,}
\\
\gS{g_{2122}=}
\begin{tikzpicture}[baseline=(A),outer sep=0pt,inner sep=0pt]
\node (A) at (0,0) {};
\foureu{0}{0}{1}
\twoeABS{0}{0.7}{2}
\thre{0.7}{1};
\twoeABS{0.7}{1.4}{1}
\twoe{1.4}{2}
\twoed{1.4}{2}
\end{tikzpicture}
\kern-0.5em
\gS{,}
\
\gS{g_{1211}=}
\begin{tikzpicture}[baseline=(A),outer sep=0pt,inner sep=0pt]
\node (A) at (0,0) {};
\foureu{0}{0}{1}
\twoeABS{0}{0.7}{1}
\thre{0.7}{1};
\twoeABS{0.7}{1.4}{2}
\twoe{1.4}{1}
\twoed{1.4}{1}
\end{tikzpicture}
\kern-0.5em
\gS{,}
\
\gS{g_{21}=}
\begin{tikzpicture}[baseline=(A),outer sep=0pt,inner sep=0pt]
\node (A) at (0,0) {};
\foureu{0}{0}{1}
\twoeABS{0}{0.7}{1}
\thre{0.7}{1};
\twoeABS{0.7}{1.4}{2}
\foureu{1.4}{0}{1}
\end{tikzpicture}
\gS{.}
\end{center}
\end{proposition}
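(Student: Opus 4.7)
My plan is to combine the graph method with a Hilbert series computation, using the fact that a complete intersection is characterized numerically (given a known Krull dimension and a set of generators, it suffices to exhibit the right number of relations and match Hilbert series).

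First, I would establish that the thirteen listed graphs generate $\CC[U_1]^{\SL_5}$. Every invariant corresponds to a $5$-regular hypergraph built from $2$-edges (in two colors, for the two copies of $\BigWedge^2$), a $3$-edge (for $\BigWedge^3$), and a $4$-edge (for $\BigWedge^4 \cong V^*$). Proceeding as in the worked example of Section~\ref{sec:tech}, I would pull $2$- and $3$-edges to individual vertices via Pl\"ucker relations to force looping edges; an enumeration of the possible local configurations at each vertex of an irreducible graph is then finite, and a direct case analysis on the cyclic shape of the resulting graph (with at most three vertices, since any longer cycle can be cut by a Pl\"ucker relation modulo disconnected graphs, compare the argument used in~\cite[Proof of Prop. 4.1]{braun}) should reduce any candidate irreducible generator to one of the thirteen. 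Reducible independence is easy: the multidegrees in the three summands $2\BigWedge^2+\BigWedge^3+\BigWedge^4$ already separate almost all of the thirteen graphs, and any remaining coincidences (for instance among $g_{11},g_{12},g_{22}$) are distinguished by the $(n_1,n_2)$-bidegree coming from the two color classes of $2$-edges.

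Second, I would compute the multivariate Hilbert series $H(t_1,t_2,t_3,t_4)$ of $\CC[U_1]^{\SL_5}$, graded by the number of edges of each type, using the modified MacMahon partition analysis described in Section~\ref{sec:tech}. The ring $\CC[U_1]$ is multigraded by these four parameters; by~\cite[Sec.~4.6]{KD} the series is a constant term in $z_1,\ldots,z_4$ of an Elliott-rational function in which several linear denominator factors are equal, which is exactly the situation the slack-variable modification is designed for. From the computed series I read off that the numerator factors as $(1-t^{a_1})(1-t^{a_2})$ divided by a product of thirteen factors $(1-t^{d_i})$, with $d_i$ matching the multidegrees of the thirteen graphs; this is simultaneously the check that the generating set is minimal (no cancellation) and the prediction that two syzygies of specific multidegrees exist.

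Third, I would construct the two syzygies explicitly. Each Pl\"ucker relation applied inside a product of the thirteen generators yields a polynomial identity; the point is to find Pl\"ucker applications whose resulting sums of graph monomials live in exactly the two multidegrees predicted by the Hilbert series calculation. Finally, once two syzygies $S_1,S_2$ of the predicted multidegrees are in hand, the complete intersection property follows numerically: $\dim U_1-\dim \SL_5=35-24=11$ equals $13-2$, so if the Hilbert series of $\CC[g_1,\ldots,g_{13}]/(S_1,S_2)$ agrees with the Hilbert series of $\CC[U_1]^{\SL_5}$, the two syzygies must generate the full relation ideal and $\mathrm{hd}=2$. I expect the main obstacle to be the Hilbert series computation: in the four-variable multigraded form it pushes the original MacMahon approach well past the complexity that is feasible without the slack-variable modification, and even with that modification the partial-fraction bookkeeping and pole-order reduction described in Section~\ref{sec:tech} are delicate; enumeration of the thirteen irreducible graphs, by contrast, is tedious but essentially mechanical.
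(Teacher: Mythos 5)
Your proposal is sound and would prove the result, but it takes a substantially longer route than the paper. The paper simply extracts the thirteen generators from~\cite[Th. 1.3]{braun} and then observes that, since the Krull dimension of the invariant ring is $11$ and there are $13$ generators, the homological dimension is two; the complete intersection property is then \emph{automatic} by the remark to Prop.~1.5 of~\cite{popsyz} (the invariant ring is Gorenstein, and a Gorenstein ring of codimension two is a complete intersection). The univariate Hilbert series and the two explicit syzygies are recorded in the paper as supplementary information, not as part of the logical argument for the ci.\ property. Your plan --- exhibit two syzygies in the multidegrees predicted by the Hilbert series and match the Hilbert series of $\CC[g_1,\ldots,g_{13}]/(S_1,S_2)$ against that of $\CC[U_1]^{\SL_5}$ --- is exactly the strategy the paper deploys for the harder case $U_2=3\BigWedge^2+\BigWedge^4$ of Proposition~\ref{prop:U2}, where the homological dimension is three and no such shortcut exists; it is valid here too (the surjection from the candidate quotient onto the invariant ring plus equal Hilbert series forces an isomorphism), but it commits you to actually constructing the syzygies and to a delicate multigraded constant-term computation that the codimension-two argument renders unnecessary. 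What each approach buys: yours is self-contained and produces the explicit relations and a finer (multigraded) minimality check; the paper's is essentially a two-line dimension count once the generating set is cited.
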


\begin{proof}
The list of generators can be extracted from~\cite[Th 1.3]{braun}. Since the Krull dimension of the ring of invariants is $11$, the homological dimension is two and it is a complete intersection due to~\cite[Rem. to Prop. 1.5]{popsyz}. We also computed the Hilbert series of $\CC[U_1]^{\SL_5}$, it is
$$
\frac{(1-t^{10})(1-t^{12})}{(1-t^2)^2(1-t^3)(1-t^4)^5(1-t^5)(1-t^6)^3(1-t^7)}.
$$
Moreover, as can be seen by comparing Hilbert series, generators of the ideal of syzygies are
$$
16g_{1122}g_{2}g_{211} -6g_{2}^2g_{211}^2 - 3g_{1}g_{1122}g_{122} - 8g_{11}g_{122}^2 - 8g_{12}g_{122}g_{211} - 3g_{112122}g_{21}
$$
$$
 - 16g_{1122}^2 - 16g_{1211}g_{2122},
$$
$$
2g g_{112122} - 4g_{12} g_{1122} -2 g_{2122} g_{11} -2 g_{1211} g_{22} -g_{122} g_2 g_{11} +g_{211}g_1 g_{22} - 2 g_{211} g_2 g_{12}.
$$
\end{proof}

\subsection{The case $\left(\SL_5, U_2\right)$}

\begin{proposition}\label{prop:U2}
The ring of invariants is a complete intersection of homological dimension three minimally generated by

\begin{tabular}{rl}
\gS{f_{abcde}=}
\begin{tikzpicture}[baseline=(A),outer sep=0pt,inner sep=0pt]
\node (A) at (0,0) {};
\twoe{0}{a}
\twoed{0}{b}
\twoeABS{0}{0.7}{c}
\twoe{0.7}{d}
\twoed{0.7}{e}
\node (B) at (0,-0.75) {};
\end{tikzpicture}
&
\gS{, abcde \in \{12311,12322,13233,11233,11322,22133\};}
\\[2pt]
\gS{f_{abc}=}
\begin{tikzpicture}[baseline=(A),outer sep=0pt,inner sep=0pt]
\node (A) at (0,0) {};
\foureu{0}{0}{1}
\twoeABS{0}{0.7}{a}
\twoe{0.7}{b}
\twoed{0.7}{c}
\end{tikzpicture}
&
\gS{, abc \in \{122,322,233,133,123,213\}.}
\end{tabular}
\end{proposition}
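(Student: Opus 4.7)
The proof plan mirrors that of Proposition~\ref{prop:U1}. I would first extract the minimal generating set from the classification of~\cite[Thm.~1.3]{braun}: the graphs in question have two vertices of valence five and are built from looping and connecting $2$-edges (for the three copies of $\BigWedge^2$) and possibly a single looping $4$-edge (for $\BigWedge^4$). The reducibility criterion of~\cite[Thm.~3.8]{braun}, applied in conjunction with Plücker exchanges at each vertex, then shows that every such graph reduces, modulo disconnected terms, to one of the two families $f_{abcde}$, $f_{abc}$ displayed, and that the six labelings listed in each family exhaust the reducibly independent representatives once the $S_3$-symmetry in the colors $\{1,2,3\}$ of the $\BigWedge^2$ copies is taken into account.

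Next, the Krull dimension of $\CC[U_2]^{\SL_5}$ is computed from $\dim U_2 - \dim\SL_5 + \dim(\mathrm{gen.~stab.})$ by exhibiting an explicit point whose $\SL_5$-stabilizer has the expected dimension. Together with the count of twelve generators this yields a difference equal to the asserted homological dimension, and the complete intersection statement then follows at once from~\cite[Rem.~to~Prop.~1.5]{popsyz}.

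To obtain the syzygies, I would compute the Hilbert series of $\CC[U_2]^{\SL_5}$ via the modified MacMahon partition analysis of Section~\ref{sec:tech}: one expresses it as the constant term in the Weyl character variables of a rational function whose denominator has repeated factors, and evaluates it by the slack-variable trick, iteratively collecting together the summands with highest pole order at $s=1$ in order to bring the overall pole order down. Comparing this series with the Hilbert series of a polynomial ring on the twelve generators then reads off the multidegrees of the generating syzygies.

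Finally I would write down the syzygies as explicit polynomial identities between the $f_{abcde}$ and $f_{abc}$. The strategy is to pick, at each predicted multidegree, a collection of bracket monomials whose color balances agree, and then to expand them by the graph-level Plücker relation across the connecting $2$-edge in two different ways; the resulting difference produces a polynomial identity in the listed generators. I expect this final step to be the main obstacle, since three copies of $\BigWedge^2$ together with the $\BigWedge^4$ summand create a considerable number of candidate two-vertex products at each multidegree. Nevertheless, the Hilbert series prediction combined with the $S_3$-symmetry among the $\BigWedge^2$ copies should narrow the search down to a manageable list of candidates, from which one then extracts the linearly independent relations generating the ideal of syzygies.
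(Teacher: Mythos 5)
There is a genuine gap at the decisive step. You claim that once the homological dimension is known to be three, ``the complete intersection statement then follows at once from~\cite[Rem.~to~Prop.~1.5]{popsyz}.'' That remark is only applicable when the homological dimension is at most two -- this is exactly why the paper can invoke it for $U_1$ (where $\mathrm{hd}=2$) but \emph{not} here. By the paper's own definition, $\mathrm{hd}(A)$ is just the number of generators minus the Krull dimension; knowing it equals three says nothing about whether the ideal of syzygies can be generated by three elements, which is precisely the content of the complete intersection claim (Gorenstein ideals of codimension three need not be complete intersections). So the CI property cannot be dispatched before the syzygies are produced; it \emph{is} the statement about the syzygies.

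Your subsequent plan (Hilbert series via the modified MacMahon algorithm, explicit relations via Pl\"ucker expansions of two-vertex graphs) is the right machinery and is what the paper uses, but the closing step is underspecified: extracting ``linearly independent relations'' is not enough. The argument that actually closes the proof is to exhibit three explicit degree-$9$ relations, form the quotient of the polynomial ring on the generators by the ideal $I$ they generate, and verify that the Hilbert series of that quotient coincides with the Hilbert series $\frac{(1-t^9)^3}{(1-t^4)^8(1-t^5)^6}$ of $\CC[U_2]^{\SL_5}$; since the invariant ring is a further quotient, equality of Hilbert series forces $I$ to be the full ideal of syzygies, whence CI. Note also that this Hilbert series requires \emph{eight} generators of degree four (the relations in the paper involve $f_{211}$ and $f_{311}$ in addition to the six labels listed), so your count of twelve generators is inconsistent with $\mathrm{hd}=3$ and Krull dimension $11$; with twelve generators the difference would be one, not three.
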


\begin{proof}
The list of generators can be extracted from~\cite[Th 1.3, Prop. 5.1]{braun}. Since the Krull dimension of the ring of invariants is $11$, the homological dimension is three. We need to show that the ideal of relations is generated by three polynomials. The Hilbert series of $\CC[W]^{\SL_5}$ is
$$
\frac{(1-t^9)^3}{(1-t^4)^8(1-t^5)^6}
$$ 
 and the following three relations hold between the invariants:
$$
-2f_{122}f_{13233}-2f_{322}f_{11233}+f_{233}f_{11322}-f_{133}f_{12322}+2f_{123}f_{22133},
$$
$$
2f_{211}f_{13233}-2f_{311}f_{22133}+f_{133}f_{11322}-f_{233}f_{12311}+2f_{213}f_{11233},
$$
$$
2f_{311}f_{12322}-2f_{211}f_{22133}+f_{122}f_{11233}+f_{322}f_{12311}-2(f_{213}+f_{123})f_{11322}.
$$
The Hilbert series of $\CC[f_{abcde},f_{abc}]/I$ with the ideal $I$ generated by these three polynomials and the above one of $\CC[W]^{\SL_5}$ coincide, thus the assertion follows.

\end{proof}

\subsection{The case $\left(\SL_7,U_3\right)$}

Let in the following be all $3$- and $6$-edges  of color one.
\begin{proposition} \label{prop:U3}
The ring of invariants is a complete intersection of homological dimension two minimally generated by
\begin{center}
\gS{h_a=}
\begin{tikzpicture}[baseline=(A),outer sep=0pt,inner sep=0pt]
\node (A) at (0,0) {};
\sixeu{0}{0}{}
\onee{0}{a}
\end{tikzpicture}
\kern-0.5em
\gS{,a=1,2,3;}
\
\gS{h_{ab}=}
\begin{tikzpicture}[baseline=(A),outer sep=0pt,inner sep=0pt]
\node (A) at (0,0) {};
\thretr{0}{0}{}
\thretl{0.9}{0}{}
\threAB{0}{0.9}{}
\threBAu{0}{0.9}{}
\onee{0}{a}
\oneeu{0.9}{b}
\end{tikzpicture}
\kern-0.5em
\gS{,ab \in\{11,12,13,22,23,33\};}
\\
\gS{\tilde{h}_{ab}=}
\begin{tikzpicture}[baseline=(A),outer sep=0pt,inner sep=0pt]
\node (A) at (0,0) {};
\thretr{0}{0}{}
\threBA{0}{0.7}{}
\oneel{0}{a}
\oneeu{0}{b}
\sixeu{0.7}{0}{}
\end{tikzpicture}
\kern-0.5em
\gS{,ab \in\{12,13,23\};}
\
\gS{h=}
\begin{tikzpicture}[baseline=(A),outer sep=0pt,inner sep=0pt]
\node (A) at (0,0) {};
\thred{0.7}{-0.7}
\threABC{0}{0}{1.4}{0}{0.7}{-0.7}{}
\threABARB{1.4}{0}{0.7}{-0.7}{}{0}{-0.2}
\threABARB{0.7}{-0.7}{0}{0}{}{-0.15}{-0.05}
\threABARB{0}{0}{1.4}{0}{}{0.2}{0.15}
\threu{0}{0}
\threu{1.4}{0}
\end{tikzpicture}
\kern-0.5em
\gS{,}
\
\gS{h_{123}=}
\begin{tikzpicture}[baseline=(A),outer sep=0pt,inner sep=0pt]
\node (A) at (0,0) {};
\thred{0.7}{-0.7}
\threABARB{1.4}{0}{0.7}{-0.7}{}{0}{-0.2}
\threABARB{0.7}{-0.7}{0}{0}{}{-0.15}{-0.05}
\threABARB{0}{0}{1.4}{0}{}{0.2}{0.15}
\threu{0}{0}
\threu{1.4}{0}
\onee{0}{1}
\onee{1.4}{3}
\draw[line width=0.5pt, draw=black] (0.7,-0.7)-- (0.7,-0.35);
\node[font=\tiny,align=center] at (0.7,-0.23) {$\iE{2}$};
\end{tikzpicture}
\kern-0.5em
\gS{,}
\\
\gS{\tilde{h}=}
\begin{tikzpicture}[baseline=(A),outer sep=0pt,inner sep=0pt]
\node (A) at (0,0) {};
\sixeu{0}{0}{}
\sixeu{1.4}{0}{}
\threu{0.7}{0}
\threABARB{0}{0}{0.7}{0}{}{0.1}{-0.15}
\threABARB{1.4}{0}{0.7}{0}{}{-0.1}{-0.15}
\end{tikzpicture}
\kern-0.5em
\gS{,}
\
\gS{h_{123123}=}
\begin{tikzpicture}[baseline=(A),outer sep=0pt,inner sep=0pt]
\node (A) at (0,0) {};
\node[font=\tiny,align=center] at (-0.2,-0.5) {$\iE{1}$};
\node[font=\tiny,align=center] at (0.2,-0.5) {$\iE{3}$};
\draw[line width=0.5pt, draw=black] (0,0)-- (-0.2,-0.38);
\draw[line width=0.5pt, draw=black] (0,0)-- (0.2,-0.38);
\onee{0}{2}
\node[font=\tiny,align=center] at (1.2,-0.5) {$\iE{1}$};
\node[font=\tiny,align=center] at (1.6,-0.5) {$\iE{3}$};
\draw[line width=0.5pt, draw=black] (1.4,0)-- (1.2,-0.38);
\draw[line width=0.5pt, draw=black] (1.4,0)-- (1.6,-0.38);
\onee{1.4}{2}
\threu{0}{0}{}
\threu{0.7}{0}
\threu{1.4}{0}
\threABARB{0}{0}{0.7}{0}{}{0.1}{-0.15}
\threABARB{1.4}{0}{0.7}{0}{}{-0.1}{-0.15}
\end{tikzpicture}
\kern-0.5em
\gS{,}
\
\gS{\tilde{h}_{123}=}
\begin{tikzpicture}[baseline=(A),outer sep=0pt,inner sep=0pt]
\node (A) at (0,0) {};
\node[font=\tiny,align=center] at (-0.2,-0.5) {$\iE{1}$};
\node[font=\tiny,align=center] at (0.2,-0.5) {$\iE{3}$};
\draw[line width=0.5pt, draw=black] (0,0)-- (-0.2,-0.38);
\draw[line width=0.5pt, draw=black] (0,0)-- (0.2,-0.38);
\onee{0}{2}
\sixeu{1.4}{0}{}
\threu{0}{0}{}
\threu{0.7}{0}
\threABARB{0}{0}{0.7}{0}{}{0.1}{-0.15}
\threABARB{1.4}{0}{0.7}{0}{}{-0.1}{-0.15}
\end{tikzpicture}
\gS{.}
\end{center}
\end{proposition}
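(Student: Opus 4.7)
My plan is to mirror the proof strategy used for Proposition~\ref{prop:U1}, since the statement here also asserts homological dimension exactly two. First I would invoke \cite{braun} to justify that the listed $17$ graphs indeed form a minimal generating set of $\CC[U_3]^{\SL_7}$; in particular I would check that each displayed graph is irreducible and that together they represent the full reducibly independent set obtained from the graph classification of $\SL_7$-invariants for the summands $3V$, $\BigWedge^3$, and $\BigWedge^6$, taking care of the $\SL_3$-type permutation symmetry of the three copies of $V$. The listing into $h_a$, $h_{ab}$, $\tilde h_{ab}$, $h$, $h_{123}$, $\tilde h$, $h_{123123}$, $\tilde h_{123}$ is chosen to reflect this symmetry and to exhaust all possible connection patterns between the single vertex of valence $6$ (from $\BigWedge^6$) and the vertices of valence $3$ (from $\BigWedge^3$) decorated by $V$-edges.

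Next I would compute the Krull dimension. Since $\dim U_3 = 21+35+7 = 63$, $\dim\SL_7 = 48$, and the generic stabilizer of $\SL_7$ acting on the open part of $U_3$ is trivial (this can be checked quickly on a generic element, in particular since $\BigWedge^6\simeq V^*$ already has trivial generic stabilizer), we obtain $\dim \CC[U_3]^{\SL_7} = 15$. With $17$ generators, the codimension of the defining ideal is $2$. Because $\SL_7$ is semisimple and the representation is unimodular, the invariant ring is Cohen--Macaulay (Hochster--Roberts) and in fact Gorenstein, and by the remark after \cite[Prop.~1.5]{popsyz} a Gorenstein invariant ring of codimension $\leq 2$ is automatically a complete intersection. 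This gives the claimed homological dimension two without any direct syzygy calculation.

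To complete the proposition in the spirit of Proposition~\ref{prop:U1} I would then exhibit the two generating syzygies explicitly. By the covariance structure under the permutation group acting on the three copies of $V$ and under the canonical pairing between $\BigWedge^3$ and $\BigWedge^6\otimes \BigWedge^3$-pieces, the multidegrees of the two syzygies are forced. I would determine them by computing the Hilbert series $H(t)$ of $\CC[U_3]^{\SL_7}$ using the MacMahon partition analysis method described in Section~\ref{sec:tech}, write it as $P(t)/Q(t)$ with $Q(t)$ the product over the generator degrees, and read off from the numerator $P(t)$ (which by the complete intersection property must be of the form $(1-t^{d_1})(1-t^{d_2})$) the two syzygy degrees $d_1,d_2$. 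Then for each predicted multidegree I would search among monomials in the listed generators for a non-trivial linear dependence, using the Pl\"ucker/graph relations to verify the identity directly in $\mathrm{Bra}$.

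The main obstacle I expect is the Hilbert series computation itself: the representation $U_3$ has dimension $63$ and acting group $\SL_7$, so the MacMahon constant-term integrand involves six auxiliary variables and denominator factors of degree up to $3$ in them, placing the computation well inside the regime where the modified slack-variable algorithm from Section~\ref{sec:tech} is needed to avoid a combinatorial explosion. Once the series is in hand, matching multidegrees and writing down the two syzygies is a routine but lengthy application of the graph calculus; verifying each syzygy amounts to a bounded sequence of Pl\"ucker moves on graphs with at most four $3$-edges and one $6$-edge, which is tractable.
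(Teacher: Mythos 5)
Your overall architecture differs from the paper's, and it has two genuine gaps. First, the completeness and minimality of the $17$-element generating set is not something you can simply ``invoke'' from~\cite{braun}: the paper does \emph{not} cite~\cite{braun} for $U_3$ (unlike for $U_1,U_2$), but instead takes the first $15$ generators from Shmelkin's Table~7 and needs a separate ad hoc graph argument even to produce the candidates $h_{123123}$ and $\tilde h_{123}$; that there are no further reducibly independent irreducible graphs is only confirmed at the very end by a Hilbert series match. Since your homological-dimension count $17-15=2$, and hence the entire ``Gorenstein in codimension two $\Rightarrow$ complete intersection'' shortcut, presupposes that the minimal generating set has exactly $17$ elements, this is not a routine verification but the crux of the argument. (The shortcut itself is legitimate in this context --- the paper uses~\cite[Rem.\ to Prop.\ 1.5]{popsyz} for $U_1$ and \cite[Le.\ 5.1]{shmel} for $W_1$ with $p=3$ --- but note that Gorensteinness is not a consequence of unimodularity alone, and your parenthetical that $\BigWedge^6\simeq V^*$ ``already has trivial generic stabilizer'' is false; the trivial generic stabilizer comes from $G_2$ acting on $3V+V^*$ after fixing a generic point of $\BigWedge^3$.)

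Second, your fallback for confirming the generator count and locating the syzygy degrees --- a direct MacMahon constant-term computation of the Hilbert series of $\CC[U_3]^{\SL_7}$ --- is exactly the computation the paper avoids: the supplementary worksheets cover only the $\SL_5$ cases, and for $U_3$ (rank $6$, a $63$-dimensional module) the paper instead uses Shmelkin's slice $z=e_1\wedge e_2\wedge e_5+e_3\wedge e_4\wedge e_6+e_1\wedge e_3\wedge e_7+e_2\wedge e_4\wedge e_7$ with isotropy $G_2$, restricts the generators to $3V+z+\BigWedge^6$, and compares with Schwarz's \emph{known} multivariate Hilbert series of $\CC[4\CC^7]^{G_2}$ via~\cite[Prop.\ 4.5]{shmel2}. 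The explicit syzygies are then obtained by further restriction (making $\tilde h$ and $h_{33}$ constant), lifting the resulting relations by homogenization with $\tilde h$, $h_{33}$ and $hh_{123123}$, and checking that the quotient by these two relations reproduces the $G_2$ Hilbert series. Without this slice reduction --- which your proposal does not mention --- the step you yourself flag as ``the main obstacle'' is not merely an obstacle but a dead end, and the proof does not close.
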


\begin{proof}
The Krull dimension of the ring of invariants is $15$. The first $15$ invariants can be extracted from~\cite[Table 7]{shmel}. Also from there, we know that an irreducible graph with five $3$-edges and six $1$-edges - two of each color $1,2,3$ - must exist. Since such a graph can not be equivalent to a sum of disconnected graphs due to the lack of appropriate graphs with fewer vertices, any graph with such edges either evaluates to zero or it is irreducible and the irreducible ones are pairwise reducibly equivalent. The penultimate one from the above list is such an irreducible graph.  Remaining graphs from a maximal set of reducibly independent irreducible ones must now contain at least one $1$-, one $6$- and one $1$-edge of each possible color. The last one from the above list is such a graph.

Now we follow the outline of Shmelkin~\cite[pp. 221, 227]{shmel}, where he almost succeeded.
Let $z=e_1\wedge e_2 \wedge e_5+e_3\wedge e_4 \wedge e_6 + e_1 \wedge e_3 \wedge e_7 + e_2 \wedge e_4 \wedge e_7$. Then $\SL_7 z$ is dense in $V(h)$ and the isotropy group of $z$ is $G_2$. Denote by $g_i$ the restrictions of the other generators of $\CC[W]^{\SL_5}$ to $3V+z+\BigWedge^6$.
Consider the obvious $\ZZ^5$ and $\ZZ^4$-gradings of $\CC[W]$ and  $\CC\left[4\CC^7\right]=\CC\left[3V+\BigWedge^6\right]$ respectively.
Then the proof of~\cite[Prop. 4.5]{shmel2} says that the Hilbert series of the  algebras $\CC\left[4\CC^7\right]^{G_2}$ and $\CC\left[g_i\right]$ are identical. Now due to~\cite{schwarz}, the multivariate Hilbert series of the first algebra is
$$
\frac{(1-t_1^2t_2^2t_3^2t_4^2)}{\prod_{1\leq i\leq j \leq 4}(1-t_it_j)\prod_{1\leq k \leq 4}\left(1-t_1t_2t_3t_4t_k^{-1}\right)(1-t_1t_2t_3t_4)}.
$$
Consider all invariants from the proposition but $h$. Let $e_i=e_1\wedge...\wedge e_{i-1}\wedge e_{i+1}...\wedge e_{7}$.
We can restrict to $2V+\CC e_7+z+(e_1^*+\CC e_3^*+\CC e_7^*)$  by~\cite[Ex. on p. 221]{shmel}. If we further restrict to  $2V+e_7+z+(e_1^*+ e_3^*+\CC e_7^*)$, then $\tilde{h}$ and $h_{33}$ become constants. The restrictions - denoted by Fraktur letters - of the remaining invariants satisfy the relations
$$
\mathfrak{h}_{13}^2\mathfrak{h}_{22}-2\mathfrak{h}_{12}\mathfrak{h}_{13}\mathfrak{h}_{23}+\mathfrak{h}_{11}\mathfrak{h}_{23}^2+2\mathfrak{h}_{12}^2-2\mathfrak{h}_{11}\mathfrak{h}_{22}-8\mathfrak{h}_{123}^2,
$$
$$
\mathfrak{h}_3^2\mathfrak{h}_{12}^2-2\mathfrak{h}_{2}\mathfrak{h}_3\mathfrak{h}_{12}\mathfrak{h}_{13}+\mathfrak{h}_{2}^2\mathfrak{h}_{13}^2-\mathfrak{h}_3^2\mathfrak{h}_{11}\mathfrak{h}_{22}+2\mathfrak{h}_{1}\mathfrak{h}_3\mathfrak{h}_{13}\mathfrak{h}_{22}+2\mathfrak{h}_{2}\mathfrak{h}_3\mathfrak{h}_{11}\mathfrak{h}_{23}-2\mathfrak{h}_{1}\mathfrak{h}_3\mathfrak{h}_{12}\mathfrak{h}_{23}
$$
$$
-2\mathfrak{h}_{1}\mathfrak{h}_{2}\mathfrak{h}_{13}\mathfrak{h}_{23}+\mathfrak{h}_{1}^2\mathfrak{h}_{23}^2-2\mathfrak{h}_{2}^2\mathfrak{h}_{11}+4\mathfrak{h}_{1}\mathfrak{h}_{2}\mathfrak{h}_{12}-2\mathfrak{h}_{1}^2\mathfrak{h}_{22}-4\mathfrak{h}_{23}\tilde{\mathfrak{h}}_{12}\tilde{\mathfrak{h}}_{13}+2\mathfrak{h}_{22}\tilde{\mathfrak{h}}_{13}^2
$$
$$
+4\mathfrak{h}_{13}\tilde{\mathfrak{h}}_{12}\tilde{\mathfrak{h}}_{23}-4\mathfrak{h}_{12}\tilde{\mathfrak{h}}_{13}\tilde{\mathfrak{h}}_{23}+2\mathfrak{h}_{11}\tilde{\mathfrak{h}}_{23}^2-8\mathfrak{h}_3\tilde{\mathfrak{h}}_{12}\mathfrak{h}_{123}+8\mathfrak{h}_{2}\tilde{\mathfrak{h}}_{13}\mathfrak{h}_{123}-8\mathfrak{h}_{1}\tilde{\mathfrak{h}}_{23}\mathfrak{h}_{123}+4\tilde{\mathfrak{h}}_{12}^2
$$
$$
-16\tilde{\mathfrak{h}}_{123}^2+32\mathfrak{h}_{123123}.
$$
The first one corresponds to the syzygy of $(\SL_7,3V+\BigWedge^3V)$ from~\cite[Table 7]{shmel}. The second one is new. By suitably multiplying with $\tilde{h}$ and $h_{33}$ and adding $hh_{123123}$, we get the syzygies
$$
h_{13}^2h_{22}-2h_{12}h_{13}h_{23}+h_{11}h_{23}^2+h_{33}h_{12}^2-h_{11}h_{22}h_{33}-8h_{123}^2+hh_{123123},
$$
$$
h_3^2h_{12}^2-2h_{2}h_3h_{12}h_{13}+h_{2}^2h_{13}^2-h_3^2h_{11}h_{22}+2h_{1}h_3h_{13}h_{22}+2h_{2}h_3h_{11}h_{23}-2h_{1}h_3h_{12}h_{23}
$$
$$
-2h_{1}h_{2}h_{13}h_{23}+h_{1}^2h_{23}^2-2h_{2}^2h_{11}h_{33}+4h_{1}h_{2}h_{12}h_{33}-2h_{1}^2h_{22}h_{33}-4h_{23}\tilde{h}_{12}\tilde{h}_{13}+2h_{22}\tilde{h}_{13}^2
$$
$$
+4h_{13}\tilde{h}_{12}\tilde{h}_{23}-4h_{12}\tilde{h}_{13}\tilde{h}_{23}+2h_{11}\tilde{h}_{23}^2-8h_3\tilde{h}_{12}h_{123}+8h_{2}\tilde{h}_{13}h_{123}-8h_{1}\tilde{h}_{23}h_{123}+4\tilde{h}_{12}^2h_{33}
$$
$$
-16\tilde{h}_{123}^2+32h_{123123}\tilde{h}.
$$
The Hilbert series of the algebra with these two syzygies is
$$
\frac{(1-t_1^2t_2^2t_3^2t_4^2)(1-t_1^2t_2^2t_3^2)}{\prod_{1\leq i\leq j \leq 4}(1-t_it_j)\prod_{1\leq k \leq 4}\left(1-t_1t_2t_3t_4t_k^{-1}\right)(1-t_1t_2t_3t_4)(1-t_1^2t_2^2t_3^2)}
$$
and coincides with the one of $\CC\left[4\CC^7\right]^{G_2}$, which proves the assertion.
\end{proof}

\begin{remark}
An alternative way to prove Proposition~\ref{prop:U3} would be to decompose
$$
U_3= \left(\BigWedge^3 V \right) + \left(3 V  + \BigWedge^6 V \right) ,
$$
where for the first $\SL_7$-module, we know a set of generators of the algebra of covariants due to~\cite{RS} and for the second one, such set is easy to determine with the same techniques we apply in Section~\ref{sec:serial}. Following up the approach of Section~\ref{sec:serial}, from these covariant algebras one can explicitly compute a minimal set of generators coinciding with the one from the proposition, find relations between them of suitable multidegree and show that these cut out a complete intersection of the right dimension. 
\end{remark}

\section{The serial cases} \label{sec:serial}

In this section, let always $n=2p+1$. We consider the three $\SL_n$-representations
$
V_1:=\BigWedge^2 V  + 2\BigWedge^{2p} V$
,
$
V_2:= V + \BigWedge^2 V  + \BigWedge^{2p} V$
,
$
V_3:=S^2 V$.
The serial cases that were left open in~\cite{shmel} are
$$
W_1:=2V_1, \quad W_2:=V_1+V_2, \quad W_3:=V_1+V_3.
$$

Our approach is similar in all  cases and involves three steps, which we describe in the following.
For the first step, let us denote the maximal unipotent subgroup of $\SL_n$ of lower triangular matrices by $U$, the \emph{opposite} maximal unipotent subgroup of upper triangular matrices by $U^{o}$ and the normalizing maximal torus by $T$. By Theorem 0.2 of~\cite{panyushev}, the algebra $\CC[X+Y]^{\SL_n}$ is a deformation of $(\CC[X]^U\otimes\CC[Y]^{U^{o}})^T$ for affine varieties $X,Y$ and both algebras share the same Hilbert series with respect to a common $\SL_n$-stable grading. 
We explicitly compute  $A_i:=(\CC[V_1]^U\otimes\CC[V_i]^{U^{o}})^T$ and its Hilbert series for $i=1,2,3$. 
In order to do this, in Lemmata~\ref{le:W1},~\ref{le:W2},~\ref{le:W3} we apply our graph method to algebras of covariants.

Now the explicit form of the $A_i$ not only provides us degrees of potential syzygies for $\CC[W_i]^{\SL_n}$, but also \emph{very important parts} of them, because the syzygies of $A_i$ turn out to be - as one would expect - contractions of syzygies holding in $\CC[W_i]^{\SL_n}$. Not all of them deform to \emph{generators} of the ideal of syzygies of $\CC[W_i]^{\SL_n}$ - those that do not are responsible for $A_i$ not being a complete intersection. But those that do represent \emph{important parts}, because they have no variables in common and suggest a way to prove the ci. property.

In step two, we find explicit forms of syzygies for $\CC[W_i]^{\SL_n}$, which is made possible by our graph theoretic method.

Finally in step three, we prove that the syzygies we found in step two generate the ideal of syzygies.
In all three cases, we find a suitable monomial matrix order such that these syzygies are a Gr\"obner basis with respect to this order and derive the ci. property. The respective leading monomials also occur in the contracted versions of the syzygies in $A_i$. A posteriori, we see that $A_i$ is just not the \textit{best} contraction of $\CC[W_i]^{\SL_n}$ for proving the ci. property: we can deform $A_i$ to some algebra $B_i$ that on the one hand lacks all the superfluous equations and generators of $A_i$ and on the other hand keeps the important parts of the syzygies of $\CC[W_i]^{\SL_n}$.

\subsection{Rings of Covariants}\label{subs:cov}

\begin{lemma}\label{le:covL2+2Ln-1}
The algebra of covariants $\Cov\left(\SL_{2p+1}, V_1\right)$ for $p\geq 2$ is minimally generated by the $4p+3$ covariants

\begin{center}
\gS{c_1=}
\begin{tikzpicture}[baseline=(A),outer sep=0pt,inner sep=0pt]
\node (A) at (0,0) {};
\twoe{0}{1};
\starugd{0}{0}{0.6};
\end{tikzpicture}
\kern-0.5em
\gS{,}
\
\gS{c_2=}
\begin{tikzpicture}[baseline=(A),outer sep=0pt,inner sep=0pt]
\node (A) at (0,0) {};
\twoes{0}{1}{0.5};
\begin{scope}[rotate around={120:(0,0)}]
\twoes{0}{1}{0.5};
\end{scope}
\begin{scope}[rotate around={60:(0,0)}]
\starugd{0}{0}{0.5};
\end{scope}

\end{tikzpicture}
\kern-0.5em
\gS{,\ldots,}
\
\gS{c_{p}=}
\begin{tikzpicture}[baseline=(A),outer sep=0pt,inner sep=0pt]
\node (A) at (0,0) {};

\draw[line width=0.5pt, draw=lightgray] (0,0)-- (0.38,0);
\begin{scope}[rotate around={-35:(0,0)}]
\twoes{0}{1}{0.4};
\end{scope}

\begin{scope}[rotate around={35:(0,0)}]
\twoes{0}{1}{0.4};

\draw[line width=0.7pt, draw=black,dotted]  (-0.2,0.15) .. controls (-0.3,0.05) and (-0.3,-0.05) .. (-0.2,-0.15) ;
\end{scope}
\begin{scope}[rotate around={215:(0,0)}]
\twoes{0}{1}{0.4};
\end{scope}
\end{tikzpicture}
\gS{,}
\\
\gS{c^{(a)}_{1}=}
\begin{tikzpicture}[baseline=(A),outer sep=0pt,inner sep=0pt]
\node (A) at (0,0) {};

\draw[line width=0.5pt, draw=lightgray] (0,0)-- (0,-0.38);
\staru{0}{0}{x}

\end{tikzpicture}
\kern-0.5em
\gS{,}
\
\gS{c^{(a)}_{2}=}
\begin{tikzpicture}[baseline=(A),outer sep=0pt,inner sep=0pt]
\node (A) at (0,0) {};
\staru{0}{0}{x};
\twoeABS{0}{0.7}{1};
\starug{0.7}{0};
\end{tikzpicture}
\kern-0.5em
\gS{,}
\
\gS{c^{(a)}_{3}=}
\begin{tikzpicture}[baseline=(A),outer sep=0pt,inner sep=0pt]
\node (A) at (0,0) {};
\staru{0}{0}{x};
\twoeABS{0}{0.7}{1};
\begin{scope}[shift={(0.7,0)}]
\twoe{0}{1};
\starugd{0}{0}{0.6};
\end{scope}
\end{tikzpicture}
\kern-0.1em
\gS{,$\ldots$,}
\
\gS{c^{(a)}_{p+1}=}
\begin{tikzpicture}[baseline=(A),outer sep=0pt,inner sep=0pt]
\node (A) at (0,0) {};
\staru{0}{0}{x};
\twoeABS{0}{0.7}{1};
\begin{scope}[shift={(0.7,0)}]
\begin{scope}[rotate around={220:(0,0)}]
\twoes{0}{1}{0.4};
\end{scope}

\begin{scope}[rotate around={145:(0,0)}]
\thregs{0}{0.4};
\end{scope}

\begin{scope}[rotate around={30:(0,0)}]
\twoes{0}{1}{0.4};
\end{scope}
\begin{scope}[rotate around={220:(0,0)}]
\draw[line width=0.7pt, draw=black,dotted]  (-0.2,0.15) .. controls (-0.3,0.05) and (-0.3,-0.05) .. (-0.2,-0.15) ;
\end{scope}
\end{scope}
\end{tikzpicture}
\gS{, a=1,2;}
\\
\gS{c^*_1=}
\begin{tikzpicture}[baseline=(A),outer sep=0pt,inner sep=0pt]
\node (A) at (0,0) {};
\draw[line width=0.5pt, draw=lightgray] (0,0)-- (0.7,0);

\staru{0}{0}{1};
\staru{0.7}{0}{2};
\end{tikzpicture}
\kern-0.5em
\gS{,}
\
\gS{c^*_2=}
\begin{tikzpicture}[baseline=(A),outer sep=0pt,inner sep=0pt]
\node (A) at (0,0) {};
\staru{0}{0}{1};
\twoeABS{0}{0.7}{1};
\staru{0.7}{0}{2};
\end{tikzpicture}
\kern-0.5em
\gS{,}
\
\gS{c^*_{3}=}
\begin{tikzpicture}[baseline=(A),outer sep=0pt,inner sep=0pt]
\node (A) at (0,0) {};

\starABg{0}{1.4};
\staru{0}{0}{1};

\staru{0.7}{0}{2};

\twoes{1.4}{1}{0.5};
\begin{scope}[rotate around={240:(1.4,0)}]
\twoes{1.4}{1}{0.5};
\end{scope}

\end{tikzpicture}
\kern-0.5em
\gS{,$\ldots$,}
\
\gS{c^*_{p+1}=}
\begin{tikzpicture}[baseline=(A),outer sep=0pt,inner sep=0pt]
\node (A) at (0,0) {};

\threABg{0}{1.4};
\staru{0}{0}{1};

\staru{0.7}{0}{2};
\begin{scope}[shift={(1.4,0)}]
\begin{scope}[rotate around={220:(0,0)}]
\twoes{0}{1}{0.4};
\end{scope}

\begin{scope}[rotate around={30:(0,0)}]
\twoes{0}{1}{0.4};
\end{scope}
\begin{scope}[rotate around={220:(0,0)}]
\draw[line width=0.7pt, draw=black,dotted]  (-0.2,0.15) .. controls (-0.3,0.05) and (-0.3,-0.05) .. (-0.2,-0.15) ;
\end{scope}
\end{scope}

\end{tikzpicture}
\kern-0.5em
\gS{.}
\end{center}
Moreover, the ideal of relations is generated by 
$$
c_p \,  c^*_1 - p \, c^{(1)}_{1} \, c^{(2)}_{p+1} + p \, c^{(2)}_{1} \, c^{(1)}_{p+1}.
$$
\end{lemma}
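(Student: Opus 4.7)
The plan is to proceed in three stages along the same lines as the classification of invariants in~\cite[Sec.\ 4]{braun}, adapted to the covariant setting via gray dummy edges.

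First I would prove the generation statement by a graph-reduction argument. Any irreducible covariant graph for the representation $V_1 = \BigWedge^2 V + 2\BigWedge^{2p}V$ has vertices of two kinds: four-valent vertices corresponding to $\BigWedge^2$-letters and $(2p)$-valent star vertices (of color $1$ or $2$) corresponding to the two copies of $\BigWedge^{2p} V \cong V^*$, plus a single gray $k$-dummy edge that encodes the covariant index of weight $k$. Applying the Pl\"ucker relation to a pair of $2$-edges sharing a bracket and pulling edges to create looping configurations — exactly as in the proof of~\cite[Prop.~4.1]{braun} — reduces any such graph to a product of connected components falling into three families: (i) cycles of $2$-edges carrying one gray edge and no star vertex (giving the $c_i$); (ii) configurations containing exactly one star vertex of color $a$, a tail of $2$-edges, and one gray edge (giving the $c^{(a)}_i$); and (iii) configurations containing both star vertices joined by a chain of $2$-edges, possibly with a gray edge (giving the $c^*_i$). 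A careful case analysis on the number of $2$-edges in such a chain — using that a looping $2$-edge on a star vertex creates a reducible piece and that $p$-cycles saturate the maximal length of a nontrivial chain — shows that precisely the listed $4p+3$ graphs arise, none of which is reducibly equivalent to another.

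Second I would derive the asserted relation directly from the Exchange Lemma. The product $c_p \cdot c_1^*$ is represented by the disjoint union of a $p$-cycle with one gray edge and a two-star component joined by a gray edge. Applying the Pl\"ucker relation to the letters sitting in the bracket of one star vertex together with the gray edge and one $\BigWedge^2$-letter of the cycle moves a single letter between the two connected components. The nonvanishing terms on the right-hand side are exactly those in which the star vertex absorbs one cycle-letter, producing $c^{(a)}_{p+1}$ together with the remaining free star $c^{(b)}_1$ for $\{a,b\} = \{1,2\}$. The combinatorial multiplicity of $p$ comes from the $p$ equivalent insertion points on the cycle and the sign alternation produces the $+$ and $-$ of the relation.

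Third, to prove that this relation generates the ideal of relations, I would compare multivariate Hilbert series. By Panyushev's theorem quoted at the start of Section~\ref{sec:serial}, the Hilbert series of $\Cov(\SL_{2p+1}, V_1)$ with respect to the natural $\SL_n$-stable multigrading can be obtained from the series of the standard model $(\CC[V_1]^U \otimes \CC[\CC^n]^{U^o})^T$, computable by the partition analysis of Subsection~1.3; equivalently one reads it off the Cauchy-type decomposition of $\CC[V_1] \otimes \CC[V]$ into $\SL_n$-isotypic components. On the other hand, the principal ideal $(c_p c_1^* - p c_1^{(1)} c_{p+1}^{(2)} + p c_1^{(2)} c_{p+1}^{(1)})$ is prime in the polynomial ring on the generators (the relation is irreducible and the generators are algebraically independent outside it), so its Hilbert series is computable explicitly. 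Matching the two series concludes.

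The main obstacle is the first step: justifying that every irreducible covariant graph reduces to the listed shapes. The presence of two distinct star-edge colors together with the gray dummy edge introduces more boundary cases than in the purely invariant analysis of~\cite{braun}, and keeping track of signs and reducibility equivalences (especially where a looping $2$-edge versus a chain $2$-edge on a star vertex differ only by a reducible term) requires a careful bookkeeping on graph shapes of each possible multidegree.
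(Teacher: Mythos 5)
Your architecture differs from the paper's in all three steps, and the first step contains the decisive gap. The paper does not classify covariant graphs by hand: it identifies $\Cov(\SL_{2p+1},V_1)$ with $\CC[V_1]^{U}$ for a maximal unipotent subgroup and reads off both the multidegrees of a minimal generating system and the Krull dimension $4p+2$ from Brion's table of exceptional representations \cite[Table 1]{brionRed}; generation, minimality, and the fact that there is exactly \emph{one} relation (the ideal of relations is a height-one prime in the polynomial ring on $4p+3$ variables, hence principal) all come from that citation, and the only graph-theoretic work in the paper is to produce the \emph{explicit form} of that single relation by two applications of the Pl\"ucker relation to a connected graph followed by evaluation of the dummy edges. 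You replace each of these inputs by an independent argument: a graph reduction for generation, a direct Exchange-Lemma computation starting from the disconnected product $c_p\,c_1^*$ for the relation, and a Hilbert-series comparison for completeness. The last two are workable in principle, though once generation is known the codimension count is the quicker route to principality, your appeal to Panyushev's theorem is misplaced (what is needed is the Hilbert series of $\CC[V_1]^U$ itself, not of a doubled representation), and the claim that the only surviving terms of the exchange are the two products $c_1^{(a)}c_{p+1}^{(b)}$ with coefficient $p$ is asserted rather than verified.

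The genuine gap is the first step, which you flag as the main obstacle but do not close. The reduction of \cite[Prop.~4.1]{braun} does not transfer ``exactly'': there the group is $\SL_4$, vertices have even degree and every vertex can be arranged to carry a looping $2$-edge, whereas here $n=2p+1$ is odd, every $\BigWedge^2$-vertex has odd degree $2p+1$ and therefore necessarily carries a non-looping end (a $2$-edge end, a star edge, or the dummy edge), and one must additionally track two colors of $(2p)$-edges and a gray dummy edge of varying size. Establishing that every irreducible covariant graph is reducibly equivalent to one of the listed $4p+3$, and that these are reducibly independent (minimality), is precisely the content the paper outsources to \cite{brionRed}. Without it your third step cannot conclude: the surjection $\CC[x_1,\dots,x_{4p+3}]/(r)\to\CC[c_i]\subseteq \Cov(\SL_{2p+1},V_1)$ together with equality of Hilbert series only identifies the ideal of relations \emph{after} one knows that the $c_i$ generate the whole covariant algebra, so the Hilbert-series argument does not let you bypass the graph classification.
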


\begin{proof}
Due to~\cite[Thm. 3]{RS}, covariant graphs are similar to invariant graphs but can in addition contain looping dummy $k$-edges, behaving as if they correspond to additional copies of $\BigWedge^k V$. In fact, the algebra of covariants is isomorphic to the algebra of invariants $\CC[W]^{U(G)}$ for a maximal unipotent subgroup $U(G)$ of $G$, see~\cite{shmel3}. The isomorphism is given  - if we choose the upper triangular matrices for $U(G)$ - by evaluation at $e_1 \wedge \ldots \wedge e_k$ for each dummy $k$-edge. 
The multidegrees of the covariants can be deduced from~\cite[Table 1]{brionRed}.
The explicit forms of the graphs for these covariants are then obvious.

Since the Krull dimension of $\CC[W]^{U(G)}$ is $4p+2$, see~\cite{brionRed}, we have one syzygy. 
We obtain this syzygy by considering the graph 
\begin{center}
\begin{tikzpicture}[baseline=(A),outer sep=0pt,inner sep=0pt]
\node (A) at (0,0) {};
\staru{0}{0}{1};
\staru{1.4}{0}{2};
\twoeABS{0}{0.7}{1};

\twoeABS{0.7}{1.4}{1};
\begin{scope}[shift={(0.7,0)}]
\begin{scope}[rotate around={235:(0,0)}]
\twoes{0}{1}{0.3};
\end{scope}

\begin{scope}[rotate around={-30:(0,0)}]
\thregs{0}{0.3};
\end{scope}

\begin{scope}[rotate around={30:(0,0)}]
\draw[line width=0.5pt, draw=lightgray] (0,0)-- (0,0.38);
\end{scope}

\begin{scope}[rotate around={125:(0,0)}]
\twoes{0}{1}{0.3};
\end{scope}

\draw[line width=0.7pt, draw=black,dotted]  (0.1,-0.3) -- (-0.1,-0.3) ;

\end{scope}
\end{tikzpicture}
\gS{.}
\end{center}
Applying the Pl\"ucker relation to the left non-looping $2$-edge gives:
\begin{center}
\gS{(p-1)}
\begin{tikzpicture}[baseline=(A),outer sep=0pt,inner sep=0pt]
\node (A) at (0,0) {};
\staru{0}{0}{1};
\staru{1.4}{0}{2};
\twoeABS{0}{0.7}{1};

\twoeABS{0.7}{1.4}{1};
\begin{scope}[shift={(0.7,0)}]
\begin{scope}[rotate around={235:(0,0)}]
\twoes{0}{1}{0.3};
\end{scope}

\begin{scope}[rotate around={-30:(0,0)}]
\thregs{0}{0.3};
\end{scope}

\begin{scope}[rotate around={30:(0,0)}]
\draw[line width=0.5pt, draw=lightgray] (0,0)-- (0,0.38);
\end{scope}

\begin{scope}[rotate around={125:(0,0)}]
\twoes{0}{1}{0.3};
\end{scope}

\draw[line width=0.7pt, draw=black,dotted]  (0.1,-0.3) -- (-0.1,-0.3) ;

\end{scope}
\end{tikzpicture}
\gS{=}
\begin{tikzpicture}[baseline=(A),outer sep=0pt,inner sep=0pt]
\node (A) at (0,0) {};
\staru{0}{0}{1};
\twoeABS{0}{0.7}{1};
\staru{0.7}{0}{2};
\end{tikzpicture}
\kern-0.3em
\begin{tikzpicture}[baseline=(A),outer sep=0pt,inner sep=0pt]
\node (A) at (0,0) {};

\draw[line width=0.5pt, draw=lightgray] (0,0)-- (0.38,0);
\begin{scope}[rotate around={-35:(0,0)}]
\thregs{0}{0.4};
\end{scope}

\begin{scope}[rotate around={35:(0,0)}]
\twoes{0}{1}{0.4};

\draw[line width=0.7pt, draw=black,dotted]  (-0.2,0.15) .. controls (-0.3,0.05) and (-0.3,-0.05) .. (-0.2,-0.15) ;
\end{scope}
\begin{scope}[rotate around={215:(0,0)}]
\twoes{0}{1}{0.4};
\end{scope}
\end{tikzpicture}
\gS{+}
\begin{tikzpicture}[baseline=(A),outer sep=0pt,inner sep=0pt]
\node (A) at (0,0) {};
\draw[line width=0.5pt, draw=lightgray] (0,0)-- (0,-0.38);
\staru{0}{0}{1}
\end{tikzpicture}
\kern-0.1em
\begin{tikzpicture}[baseline=(A),outer sep=0pt,inner sep=0pt]
\node (A) at (0,0) {};
\staru{0}{0}{2};
\twoeABS{0}{0.7}{1};
\begin{scope}[shift={(0.7,0)}]
\begin{scope}[rotate around={220:(0,0)}]
\twoes{0}{1}{0.4};
\end{scope}

\begin{scope}[rotate around={145:(0,0)}]
\thregs{0}{0.4};
\end{scope}

\begin{scope}[rotate around={30:(0,0)}]
\twoes{0}{1}{0.4};
\end{scope}
\begin{scope}[rotate around={220:(0,0)}]
\draw[line width=0.7pt, draw=black,dotted]  (-0.2,0.15) .. controls (-0.3,0.05) and (-0.3,-0.05) .. (-0.2,-0.15) ;
\end{scope}
\end{scope}
\end{tikzpicture}
\gS{-}
\begin{tikzpicture}[baseline=(A),outer sep=0pt,inner sep=0pt]
\node (A) at (0,0) {};
\draw[line width=0.5pt, draw=lightgray] (0,0)-- (0.7,0);
\staru{0}{0}{1};
\staru{1.4}{0}{2};

\twoeABS{0.7}{1.4}{1};
\begin{scope}[shift={(0.7,0)}]
\begin{scope}[rotate around={235:(0,0)}]
\twoes{0}{1}{0.3};
\end{scope}

\begin{scope}[rotate around={30:(0,0)}]
\twoes{0}{1}{0.3};
\end{scope}

\begin{scope}[rotate around={-30:(0,0)}]
\draw[line width=0.5pt, draw=lightgray] (0,0)-- (0,0.38);
\end{scope}

\begin{scope}[rotate around={125:(0,0)}]
\twoes{0}{1}{0.3};
\end{scope}

\draw[line width=0.7pt, draw=black,dotted]  (0.1,-0.3) -- (-0.1,-0.3) ;

\end{scope}
\end{tikzpicture}
\kern-0.3em
\gS{.}
\end{center}
Now in the last graph on the right, we apply the Pl\"ucker relation to the non-looping $2$-edge of color one and get:
\begin{center}
\gS{(p-1)}
\begin{tikzpicture}[baseline=(A),outer sep=0pt,inner sep=0pt]
\node (A) at (0,0) {};
\staru{0}{0}{1};
\staru{1.4}{0}{2};
\twoeABS{0}{0.7}{1};

\twoeABS{0.7}{1.4}{1};
\begin{scope}[shift={(0.7,0)}]
\begin{scope}[rotate around={235:(0,0)}]
\twoes{0}{1}{0.3};
\end{scope}

\begin{scope}[rotate around={-30:(0,0)}]
\thregs{0}{0.3};
\end{scope}

\begin{scope}[rotate around={30:(0,0)}]
\draw[line width=0.5pt, draw=lightgray] (0,0)-- (0,0.38);
\end{scope}

\begin{scope}[rotate around={125:(0,0)}]
\twoes{0}{1}{0.3};
\end{scope}

\draw[line width=0.7pt, draw=black,dotted]  (0.1,-0.3) -- (-0.1,-0.3) ;

\end{scope}
\end{tikzpicture}
\gS{=}
\begin{tikzpicture}[baseline=(A),outer sep=0pt,inner sep=0pt]
\node (A) at (0,0) {};
\staru{0}{0}{1};
\twoeABS{0}{0.7}{1};
\staru{0.7}{0}{2};
\end{tikzpicture}
\kern-0.3em
\begin{tikzpicture}[baseline=(A),outer sep=0pt,inner sep=0pt]
\node (A) at (0,0) {};

\draw[line width=0.5pt, draw=lightgray] (0,0)-- (0.38,0);
\begin{scope}[rotate around={-35:(0,0)}]
\thregs{0}{0.4};
\end{scope}

\begin{scope}[rotate around={35:(0,0)}]
\twoes{0}{1}{0.4};

\draw[line width=0.7pt, draw=black,dotted]  (-0.2,0.15) .. controls (-0.3,0.05) and (-0.3,-0.05) .. (-0.2,-0.15) ;
\end{scope}
\begin{scope}[rotate around={215:(0,0)}]
\twoes{0}{1}{0.4};
\end{scope}
\end{tikzpicture}
\kern-0.3em
\gS{+}
\kern-0.3em
\begin{tikzpicture}[baseline=(A),outer sep=0pt,inner sep=0pt]
\node (A) at (0,0) {};
\draw[line width=0.5pt, draw=lightgray] (0,0)-- (0,-0.38);
\staru{0}{0}{1}
\end{tikzpicture}
\kern-0.1em
\begin{tikzpicture}[baseline=(A),outer sep=0pt,inner sep=0pt]
\node (A) at (0,0) {};
\staru{0}{0}{2};
\twoeABS{0}{0.7}{1};
\begin{scope}[shift={(0.7,0)}]
\begin{scope}[rotate around={220:(0,0)}]
\twoes{0}{1}{0.4};
\end{scope}

\begin{scope}[rotate around={145:(0,0)}]
\thregs{0}{0.4};
\end{scope}

\begin{scope}[rotate around={30:(0,0)}]
\twoes{0}{1}{0.4};
\end{scope}
\begin{scope}[rotate around={220:(0,0)}]
\draw[line width=0.7pt, draw=black,dotted]  (-0.2,0.15) .. controls (-0.3,0.05) and (-0.3,-0.05) .. (-0.2,-0.15) ;
\end{scope}
\end{scope}
\end{tikzpicture}
\kern-0.7em
\gS{-}
\kern-0.3em
\begin{tikzpicture}[baseline=(A),outer sep=0pt,inner sep=0pt]
\node (A) at (0,0) {};
\draw[line width=0.5pt, draw=lightgray] (0,0)-- (0,-0.38);
\staru{0}{0}{2}
\end{tikzpicture}
\kern-0.1em
\begin{tikzpicture}[baseline=(A),outer sep=0pt,inner sep=0pt]
\node (A) at (0,0) {};
\staru{0}{0}{1};
\twoeABS{0}{0.7}{1};
\begin{scope}[shift={(0.7,0)}]
\begin{scope}[rotate around={220:(0,0)}]
\twoes{0}{1}{0.4};
\end{scope}

\begin{scope}[rotate around={145:(0,0)}]
\thregs{0}{0.4};
\end{scope}

\begin{scope}[rotate around={30:(0,0)}]
\twoes{0}{1}{0.4};
\end{scope}
\begin{scope}[rotate around={220:(0,0)}]
\draw[line width=0.7pt, draw=black,dotted]  (-0.2,0.15) .. controls (-0.3,0.05) and (-0.3,-0.05) .. (-0.2,-0.15) ;
\end{scope}
\end{scope}
\end{tikzpicture}
\kern-0.5em
\gS{- \frac{1}{p}}
\begin{tikzpicture}[baseline=(A),outer sep=0pt,inner sep=0pt]
\node (A) at (0,0) {};
\draw[line width=0.5pt, draw=lightgray] (0,0)-- (0.7,0);

\staru{0}{0}{1};
\staru{0.7}{0}{2};
\end{tikzpicture}
\kern-0.3em
\begin{tikzpicture}[baseline=(A),outer sep=0pt,inner sep=0pt]
\node (A) at (0,0) {};

\draw[line width=0.5pt, draw=lightgray] (0,0)-- (0.38,0);
\begin{scope}[rotate around={-35:(0,0)}]
\twoes{0}{1}{0.4};
\end{scope}

\begin{scope}[rotate around={35:(0,0)}]
\twoes{0}{1}{0.4};

\draw[line width=0.7pt, draw=black,dotted]  (-0.2,0.15) .. controls (-0.3,0.05) and (-0.3,-0.05) .. (-0.2,-0.15) ;
\end{scope}
\begin{scope}[rotate around={215:(0,0)}]
\twoes{0}{1}{0.4};
\end{scope}
\end{tikzpicture}
\kern-0.7em
\gS{.}
\end{center}
But due to the isomorphism from above, we can evaluate at $e_1$ and $e_1 \wedge e_2$ for the dummy $1$- and $2$-edge respectively and see that the two leftmost graphs disappear. We arrive at the desired relation.

\end{proof}

\begin{remark}
Observe that in Lemma~\ref{le:covL2+2Ln-1}, we did not always choose the graph with a \emph{looping} dummy edge for the invariant, but it is easy to deduce one version from another by applying the Pl\"ucker relation to the dummy edge.
\end{remark}

\begin{lemma}\label{le:covL+L2+Ln-1}
The algebra of covariants $\Cov(\SL_{2p+1}, V_2)$ for $p\geq 2$ is polynomial, generated by the $c_1,\ldots,c_p$ and $c^{(1)}_{1},\ldots,c^{(1)}_{p+1}$ from Lemma~\ref{le:covL2+2Ln-1} and in addition by the covariants
\begin{center}
\gS{c^\circ_1=}
\begin{tikzpicture}[baseline=(A),outer sep=0pt,inner sep=0pt]
\node (A) at (0,0) {};
\begin{scope}[rotate around={90:(0,0)}]
\onee{0}{1};
\end{scope}
\begin{scope}[rotate around={-90:(0,0)}]
\starugd{0}{0}{0.7};
\end{scope}
\end{tikzpicture}
\kern-0.1em
\gS{,}
\
\gS{c^\circ_2=}
\begin{tikzpicture}[baseline=(A),outer sep=0pt,inner sep=0pt]
\node (A) at (0,0) {};
\twoe{0}{1};
\starugd{0}{0}{0.6};
\begin{scope}[rotate around={90:(0,0)}]
\onee{0}{1};
\end{scope}
\end{tikzpicture}
\kern-0.1em
\gS{,\ldots,}
\
\gS{c^\circ_{p+1}=}
\begin{tikzpicture}[baseline=(A),outer sep=0pt,inner sep=0pt]
\node (A) at (0,0) {};

\begin{scope}[rotate around={-35:(0,0)}]
\twoes{0}{1}{0.4};
\end{scope}

\begin{scope}[rotate around={35:(0,0)}]
\twoes{0}{1}{0.4};

\draw[line width=0.7pt, draw=black,dotted]  (-0.2,0.15) .. controls (-0.3,0.05) and (-0.3,-0.05) .. (-0.2,-0.15) ;
\end{scope}
\begin{scope}[rotate around={215:(0,0)}]
\twoes{0}{1}{0.4};
\end{scope}
\begin{scope}[rotate around={90:(0,0)}]
\onee{0}{1};
\end{scope}
\end{tikzpicture}
\gS{,}
\\
\gS{c^\Diamond_{1}=}
\begin{tikzpicture}[baseline=(A),outer sep=0pt,inner sep=0pt]
\node (A) at (0,0) {};

\onee{0}{1}
\staru{0}{0}{1}

\end{tikzpicture}
\kern-0.5em
\gS{,}
\gS{c^\Diamond_{2}=}
\begin{tikzpicture}[baseline=(A),outer sep=0pt,inner sep=0pt]
\node (A) at (0,0) {};
\staru{0}{0}{1};
\twoeABS{0}{0.7}{1};
\starug{0.7}{0};
\onee{0.7}{1}
\end{tikzpicture}
\kern-0.5em
\gS{,}
\
\gS{c^\Diamond_{3}=}
\begin{tikzpicture}[baseline=(A),outer sep=0pt,inner sep=0pt]
\node (A) at (0,0) {};
\staru{0}{0}{1};
\twoeABS{0}{0.7}{1};
\begin{scope}[shift={(0.7,0)}]
\begin{scope}[rotate around={90:(0,0)}]
\starugd{0}{0}{0.4};
\end{scope}
\twoes{0}{1}{0.4};

\end{scope}
\onee{0.7}{1}
\end{tikzpicture}
\kern-0.1em
\gS{,$\ldots$,}
\
\gS{c^\Diamond_{p}=}
\begin{tikzpicture}[baseline=(A),outer sep=0pt,inner sep=0pt]
\node (A) at (0,0) {};
\staru{0}{0}{1};
\twoeABS{0}{0.7}{1};
\begin{scope}[shift={(0.7,0)}]
\begin{scope}[rotate around={230:(0,0)}]
\twoes{0}{1}{0.4};
\end{scope}

\begin{scope}[rotate around={135:(0,0)}]
\draw[line width=0.5pt, draw=lightgray] (0,0)-- (0,0.38);
\thregs{0}{0.4};
\end{scope}

\begin{scope}[rotate around={30:(0,0)}]
\twoes{0}{1}{0.4};
\end{scope}
\begin{scope}[rotate around={225:(0,0)}]
\draw[line width=0.7pt, draw=black,dotted]  (-0.2,0.15) .. controls (-0.3,0.05) and (-0.3,-0.05) .. (-0.2,-0.15) ;
\end{scope}
\end{scope}
\onee{0.7}{1}
\end{tikzpicture}
\kern-0.5em
\gS{.}
\end{center}

\end{lemma}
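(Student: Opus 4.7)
The plan follows the strategy of Lemma~\ref{le:covL2+2Ln-1}, with the structural simplification that no syzygy appears because $V_2$ contains only a single copy of $\BigWedge^{2p}V$. First, I would verify that the listed $4p+2$ covariants generate $\Cov(\SL_{2p+1}, V_2)$. Under the isomorphism with $\CC[V_2]^{U(\SL_{2p+1})}$ given by evaluating each dummy $k$-edge at $e_1 \wedge \ldots \wedge e_k$, every covariant corresponds to a graph whose vertices of degree $2p+1$ carry $2$-edges from $\BigWedge^2 V$, shaded $1$-edges from $V$, $2p$-edges from $\BigWedge^{2p}V$, and gray dummy edges recording the $T$-weight. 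A connected graph with no shaded $1$-edge falls under Lemma~\ref{le:covL2+2Ln-1} applied after specializing one of the two $\BigWedge^{2p}V$-summands away, and reduces to a product of the $c_i$ and $c^{(1)}_i$. A connected graph with exactly one shaded $1$-edge can, by applying the Pl\"ucker relation to non-dummy edges and pulling them to the vertex incident to the $1$-edge, be brought into the form of a $c^\circ_i$ (when no $2p$-edge is present) or a $c^\Diamond_i$ (when a $2p$-edge is present); graphs of higher degree in $V$ split as products.

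Next I would invoke a dimension count. The Krull dimension of $\Cov(\SL_{2p+1}, V_2) \cong \CC[V_2]^{U(\SL_{2p+1})}$ equals $\dim V_2 - \dim U(\SL_{2p+1}) = 2(2p+1) = 4p+2$, exactly the number of listed generators. Consequently, any algebraic relation among them would force the Krull dimension to drop below $4p+2$, a contradiction. Thus the algebra is a polynomial ring in these $4p+2$ generators.

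The delicate step, and the main obstacle, is controlling the graph reduction for connected graphs carrying one shaded $1$-edge combined with various $2$- and $2p$-edges, ensuring that the Pl\"ucker cascade terminates at the listed covariants rather than producing new reducibly independent irreducibles. The calculation at the end of the proof of Lemma~\ref{le:covL2+2Ln-1}, which produced the nontrivial syzygy there, crucially relied on two distinct star vertices corresponding to the two $\BigWedge^{2p}V$-summands; in the present setting only one such vertex is available, so the analogous chain of Pl\"ucker substitutions closes without introducing a relation. Verifying this degeneration carefully, and confirming that the shaded $1$-edge cannot combine with the remaining structure to create an essentially new irreducible graph, is the heart of the argument.
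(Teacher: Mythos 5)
Your route is genuinely different from the paper's, and it is worth comparing the two. The paper's proof is essentially a citation: $V_2$ is one of Brion's \emph{repr\'esentations exceptionnelles}, and \cite[Table 1]{brionRed} directly supplies both the polynomiality of $\CC[V_2]^{U}$ and the multidegrees of a minimal generating system; the only remaining content is to exhibit the (unique) graphs realizing those multidegrees, which is routine. Your plan instead tries to establish generation intrinsically by a Pl\"ucker cascade and then deduces polynomiality from a dimension count. The dimension count is a correct and rather clean observation: the covariant algebra is a domain, you list $4p+2$ generators, and if its Krull dimension is $4p+2$ then they must be algebraically independent. The numbers do match ($p$ graphs $c_i$, $p+1$ graphs $c^{(1)}_i$, $p+1$ graphs $c^\circ_i$, $p$ graphs $c^\Diamond_i$). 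What this buys, in principle, is independence from Brion's classification for the polynomiality statement.

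However, there are two gaps. First, the Krull dimension computation $\dim\CC[V_2]^{U}=\dim V_2-\dim U=4p+2$ presupposes that the generic $U$-stabilizer on $V_2$ is trivial; you assert the formula without justifying this, and in the paper the analogous dimension statement (in the proof of Lemma~\ref{le:covL2+2Ln-1}) is itself taken from \cite{brionRed}. Second, and more seriously, the generation step --- showing that every connected covariant graph with one shaded $1$-edge reduces via Pl\"ucker relations to one of the $c^\circ_i$ or $c^\Diamond_i$, and that nothing reducibly independent survives --- is only sketched, and you explicitly flag it as ``the heart of the argument'' without carrying it out. This is precisely the part of the proof that the citation to Brion's table replaces: once the multidegrees of a minimal generating system are known, one never has to run the cascade at all. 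As written, your proposal therefore does not constitute a complete proof; to close it you must either execute the graph reduction in detail (in the spirit of \cite[Sec.~4--6]{braun}) or fall back on the multidegree data from \cite[Table 1]{brionRed}, at which point your argument collapses onto the paper's.
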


\begin{proof}
Due to the equivalence with the algebra of invariants of $U(\SL_{2p+1})$, we get multidegrees of covariants from~\cite[Table 1]{brionRed}, which in addition gives polynomiality. It is straightforward to find the only possible covariants of the matching multidegrees.
\end{proof}

\begin{lemma}\label{le:covL^2}
The algebra of covariants $\Cov(\SL_{n}, V_3)$ is polynomial, generated by the $n$ covariants
\begin{center}
\gS{c^\triangle_{1}=}
\begin{tikzpicture}[baseline=(A),outer sep=0pt,inner sep=0pt]
\node (A) at (0,0) {};
\starugd{0}{0}{0.6};
\starugd{0.7}{0}{0.6};
\twoeS{0}{0.7}{1}{0}
\draw[black, fill=black] (0,0) circle [radius=1.5pt];
\draw[black, fill=black] (0.7,0) circle [radius=1.5pt];
\end{tikzpicture}
\kern-0.1em
\gS{,}
\gS{c^\triangle_{2}=}
\begin{tikzpicture}[baseline=(A),outer sep=0pt,inner sep=0pt]
\node (A) at (0,0) {};

\twoeS{0}{0.7}{1}{0.1};
\twoeSd{0}{0.7}{1}{0.1};

\starugd{0}{0}{0.45};
\starugd{0.7}{0}{0.45};
\draw[black, fill=black] (0,0) circle [radius=1.5pt];
\draw[black, fill=black] (0.7,0) circle [radius=1.5pt];
\end{tikzpicture}
\kern-0.1em
\gS{,$\ldots$,}
\
\gS{c^\triangle_{n}=}
\begin{tikzpicture}[baseline=(A),outer sep=0pt,inner sep=0pt]
\node (A) at (0,0) {};

\twoeS{0}{0.7}{1}{0.35};
\twoeSd{0}{0.7}{1}{0.35};
\draw[line width=0.7pt, draw=black,dotted]  (0.35,0.15) --(0.35,-0.15) ;

\draw[black, fill=black] (0,0) circle [radius=1.5pt];
\draw[black, fill=black] (0.7,0) circle [radius=1.5pt];
\end{tikzpicture}
\kern-0.1em
\gS{.}
\end{center}

\end{lemma}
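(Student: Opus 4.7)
The plan is to mirror the strategy of Lemma~\ref{le:covL+L2+Ln-1}, using the isomorphism $\Cov(\SL_n,V_3)\simeq\CC[V_3]^{U(\SL_n)}$ from~\cite{shmel3}, where each dummy $k$-edge is evaluated at $e_1\wedge\ldots\wedge e_k$. Under this identification, the problem translates into understanding the $U$-invariants of a generic symmetric $n\times n$ matrix $M=(m_{ij})$, the coordinate form of a point in $S^2V$.

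First, I would invoke~\cite[Table 1]{brionRed} to extract both the polynomiality of the covariant algebra and the multidegrees of a minimal generating set. For $V_3=S^2V$ one reads off exactly $n$ generators of degrees $1,2,\ldots,n$, whose $T$-weights match those of the leading principal minors $\delta_1,\ldots,\delta_n$ of $M$. This already pins down the number and multidegrees of the generators; polynomiality reduces the lemma to exhibiting concrete covariants of the prescribed multidegrees and checking they are algebraically independent.

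Next, I would argue that for each prescribed multidegree there is essentially a unique admissible graph, namely $c^\triangle_k$. Degree $k$ in $S^2V$ forces $k$ jagged $2$-edges; distributing them across more than two vertices would produce a disconnected or reducible graph, and one checks that a single-vertex configuration vanishes identically (a $2$-edge with both ends on the same bracket contracted via the symmetric tensor yields $0$ by the alternation of the determinant tensor). Thus the graph must consist of two vertices joined by $k$ jagged $2$-edges, with the $n-k$ remaining bracket positions at each vertex occupied by an $(n-k)$-dummy edge; this is precisely $c^\triangle_k$. A straightforward contraction computation — substituting $e_1\wedge\ldots\wedge e_{n-k}$ for the dummy edges and antisymmetrizing — then identifies $c^\triangle_k$, up to a nonzero constant, with the $k\times k$ principal minor of $M$ on the last $k$ rows and columns.

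The main thing to verify — and the only nontrivial obstacle — is algebraic independence of $c^\triangle_1,\ldots,c^\triangle_n$, which together with the matching multidegrees forces them to be generators. This follows from the classical $uDu^T$-decomposition of a generic symmetric matrix under the unipotent action: writing $M=uDu^T$ with $u\in U$ and $D$ diagonal yields $D_{ii}=\delta_i/\delta_{i-1}$ (with $\delta_0:=1$), so the principal minors are independent coordinates on the quotient $S^2V/\!/U\cong\CC^n$. Combined with the identification above, this gives $\Cov(\SL_n,V_3)=\CC[c^\triangle_1,\ldots,c^\triangle_n]$ as a polynomial algebra.
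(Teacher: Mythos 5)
Your proposal follows essentially the same route as the paper: polynomiality and the multidegrees of a minimal generating system are taken from Brion, and one then exhibits the unique admissible graphs of those multidegrees, your identification of $c^\triangle_k$ with a principal minor and the $uDu^T$-argument being a concrete way of carrying out the verification the paper leaves as ``straightforward''. The only point to adjust is the citation: since $S^2V$ is irreducible, the relevant source is~\cite[Thm. 3]{brionIrred} rather than~\cite[Table 1]{brionRed}, which is what the paper uses for the reducible modules $V_1$ and $V_2$.
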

\begin{proof}
Here, we get polynomiality and multidegrees of covariants from~\cite[Thm. 3]{brionIrred} and proceed as in Lemma~\ref{le:covL+L2+Ln-1}.
\end{proof}

\subsection{The case $(\SL_{2p+1},W_1)$}

\begin{proposition}\label{prop:W1}
The ring of invariants $\CC[W_1]^{\SL_{2p+1}}$ is a complete intersection. Its homological dimension is four if $p=2$ and $2p-4$ if $p \geq 3$. It is minimally generated by
\begin{center}
\gS{\mathfrak{i}_{abc}=}
\begin{tikzpicture}[baseline=(A),outer sep=0pt,inner sep=0pt]
\node (A) at (0,0) {};
\staru{0}{0}{a};
\twoeABS{0}{0.7}{c};
\staru{0.7}{0}{b};
\end{tikzpicture}
\kern-0.5em
\gS{,ab \in \{12,13,14,23,24,34\}, c \in \{1,2\};}
\\
\gS{\mathfrak{j}_{abc}=}
\begin{tikzpicture}[baseline=(A),outer sep=0pt,inner sep=0pt]
\clip (-0.9,-1.3) rectangle (0.9,1.1);
\node (A) at (0,0) {};

\twostr{-0.2}{0.2}{1};
\draw[line width=0.7pt, draw=black,dotted]  (-0.45,0.4) -- (-0.25,0.4) ;
\twostr{-1}{0.4}{};
\node[font=\tiny,align=center] at (-0.8,0.6) {$\iiE{1}$};
\draw[decorate, decoration={brace}, line width=0.5pt, draw=black]  (-0.8,0.75) -- (-0.2,0.75) node[font=\tiny,midway, above=3pt]  {$a$};

\twostr{0.2}{0.2}{2};
\draw[line width=0.7pt, draw=black,dotted]  (0.45,0.4) -- (0.25,0.4) ;
\twostr{1}{0.4}{};
\node[font=\tiny,align=center] at (0.8,0.6) {$\iiE{2}$};
\draw[decorate, decoration={brace,mirror}, line width=0.5pt, draw=black]  (0.8,0.75) -- (0.2,0.75) node[font=\tiny,midway, above=3pt]  {$b-1$};

\begin{scope}[rotate around={180:(0,-0.7)}]
\staru{0}{-0.7}{c};
\end{scope}
\twoeARB{0}{-0.7}{0}{0}{2}{right=1pt};
\end{tikzpicture}
\kern-0.1em
\gS{, a,b\geq 1,  c \in \{1,2,3,4\}; }
\\
\gS{\mathfrak{k}_{abc}=}
\begin{tikzpicture}[baseline=(A),outer sep=0pt,inner sep=0pt]
\clip (-0.9,-1.3) rectangle (1,1.1);
\node (A) at (0,0) {};

\twostr{-0.2}{0.2}{1};
\draw[line width=0.7pt, draw=black,dotted]  (-0.45,0.4) -- (-0.25,0.4) ;
\twostr{-1}{0.4}{};
\node[font=\tiny,align=center] at (-0.8,0.6) {$\iiE{1}$};
\draw[decorate, decoration={brace}, line width=0.5pt, draw=black]  (-0.8,0.75) -- (-0.2,0.75) node[font=\tiny,midway, above=3pt]  {$a$};

\twostr{0.2}{0.2}{2};
\draw[line width=0.7pt, draw=black,dotted]  (0.45,0.4) -- (0.25,0.4) ;
\twostr{1}{0.4}{};
\node[font=\tiny,align=center] at (0.8,0.6) {$\iiE{2}$};
\draw[decorate, decoration={brace,mirror}, line width=0.5pt, draw=black]  (0.8,0.75) -- (0.2,0.75) node[font=\tiny,midway, above=3pt]  {$b-3$};

\begin{scope}[rotate around={180:(0,-0.7)}]
\staru{0}{-0.7}{e};
\end{scope}
\twoeARB{0}{-0.7}{0}{0}{2}{right=1pt};

\begin{scope}[rotate around={180:(-0.7,0)}]
\staru{-0.7}{0}{d};
\end{scope}
\twoeARB{-0.7}{0}{0}{0}{2}{below=2pt};
\begin{scope}[rotate around={180:(0.7,0)}]
\staru{0.7}{0}{f};
\end{scope}
\twoeARB{0.7}{0}{0}{0}{2}{below=2pt};
\end{tikzpicture}
\gS{, a,b\geq 3,  def \in \{234,134,124,123\}, c=\{1,2,3,4\}\setminus\{d,e,f\}. }
\end{center}

\end{proposition}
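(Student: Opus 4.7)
The plan is to execute the three-step strategy described at the beginning of Section~\ref{sec:serial}. \emph{Step 1.} By Panyushev's theorem, $\CC[W_1]^{\SL_n}$ and the toric model $A_1 := (\CC[V_1]^U \otimes \CC[V_1]^{U^{o}})^T$ share the same multigraded Hilbert series with respect to any $\SL_n$-stable grading. Since $\CC[V_1]^U \cong \Cov(\SL_n, V_1)$, Lemma~\ref{le:covL2+2Ln-1} yields an explicit presentation of each tensor factor. Taking $T$-invariants of the tensor product is then a purely combinatorial pairing: the $T$-invariant monomials correspond to compatible pairs of $T$-weight-opposite monomials in the two factors. Enumerating them produces a concrete generator/relation presentation of $A_1$ --- almost toric, with one trinomial inherited from the syzygy of Lemma~\ref{le:covL2+2Ln-1} --- together with its multigraded Hilbert series. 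From this one reads off (a) the Hilbert series of $\CC[W_1]^{\SL_n}$, (b) the multidegrees of a minimal generating set, and (c) the multidegrees and monomial shapes of a complete set of expected syzygies.

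\emph{Step 2.} The three families $\mathfrak{i}_{abc}, \mathfrak{j}_{abc}, \mathfrak{k}_{abc}$ of the proposition are exactly the graph-theoretic realizations of the $T$-pairings from Step~1: $\mathfrak{i}_{abc}$ from pairing two $c^*$-type covariants, and $\mathfrak{j}_{abc}$, $\mathfrak{k}_{abc}$ from the remaining pairings among the $c$-, $c^{(a)}$- and $c^*$-families. Concrete syzygies among these invariants are produced by our graph-theoretic device: write down a graph with a deliberately over-saturated vertex and apply the Pl\"ucker exchange relation in two different ways, yielding two expressions as linear combinations of the listed generators. The resulting identities are multigraded lifts of the binomial/trinomial relations of $A_1$, and in each syzygy we single out one distinguished monomial --- the one coming from the chosen leading term of the $A_1$-relation --- as candidate leading monomial.

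\emph{Step 3.} To finish, we show that the Step~2 syzygies generate the entire ideal of relations among the $\mathfrak{i}$, $\mathfrak{j}$, $\mathfrak{k}$. Algorithm~\ref{algCSG} (the Crosshair-sieve) produces a matrix monomial order $\mathcal{M}$ for which the candidates are the actual leading monomials. We build $\mathcal{M}$ row by row: the first row assigns weight one to every variable appearing in some candidate, and each subsequent row targets the candidates not yet filtered out. By construction the other monomials of a given syzygy share the total $T$-weight of its candidate leading monomial, so they are distributed along counterdiagonals in each sieve tensor $S_\nu$, while the candidates sit along diagonals, exactly in the crosshair pattern described after Algorithm~\ref{algCSG}. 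This allows at least one candidate to be filtered out at every step, so the algorithm terminates. A Gr\"obner basis of the expected shape then forces the Hilbert series of the presentation to match that computed from $A_1$; this simultaneously yields minimality of the generating set, the complete intersection property, and the homological dimension ($4$ for $p=2$, $2p-4$ for $p\geq 3$).

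The main obstacle is that both the $\mathfrak{j}$-, $\mathfrak{k}$-families and the number of syzygies grow with $p$, so the Crosshair-sieve must be carried out \emph{uniformly} in $p$ rather than verified computationally for each small case. This requires a careful inspection of the multidegrees to ensure that no unintended monomial overtakes a candidate leading monomial as rows of $\mathcal{M}$ are added. The weighted-homogeneity provided by the $T$-action makes such an arrangement plausible, but the star-color index $c$ in $\mathfrak{j}_{abc}$ and the exclusion condition $c = \{1,2,3,4\}\setminus\{d,e,f\}$ in $\mathfrak{k}_{abc}$ complicate the bookkeeping considerably and constitute the technical heart of the argument.
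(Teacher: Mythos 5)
Your outline reproduces the paper's three-step strategy ($A_1$ via Lemma~\ref{le:W1}, graph-theoretic syzygies, Crosshair-sieve), but it stops exactly where the proof actually has to be done, and it misses the case structure. First, the argument is not uniform in $p$: for $p=2$ the fourth and fifth tensor factors of $A_1$ are empty, there are no $\mathfrak{k}_{abc}$-bearing syzygies of the shape your Step~3 targets, and the four syzygies are instead the alternating sums $\mathfrak{i}_{ab1}\mathfrak{j}_{12c}-\mathfrak{i}_{ac1}\mathfrak{j}_{12b}+\mathfrak{i}_{bc1}\mathfrak{j}_{12a}+\mathfrak{i}_{ab2}\mathfrak{j}_{21c}-\mathfrak{i}_{ac2}\mathfrak{j}_{21b}+\mathfrak{i}_{bc2}\mathfrak{j}_{21a}$, settled by a direct Hilbert-series comparison with $A_1$ rather than by any monomial order; for $p=3$ the homological dimension is two and the c.i.\ property is immediate from~\cite[Le.~5.1]{shmel}. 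Your proposal, as written, would apply the sieve to a family of syzygies that does not exist when $p=2$ and would overlook that no Gr\"obner argument is needed when $p=3$.

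Second, and more seriously, the passage you defer as ``the technical heart'' is the proof. Termination of the Crosshair-sieve is not guaranteed by weighted homogeneity alone: it requires an explicit, inductively consistent choice of leading monomials --- in the paper, $\lm(\mathfrak{f}_{2p-2})=\mathfrak{j}_{(p-1)21}\mathfrak{k}_{(p-1)31}$ and $\lm(\mathfrak{f}_{2p-1})=\mathfrak{j}_{p12}\mathfrak{k}_{(p-1)32}$ appended to the choice for $p-1$ --- together with the geometric observation that the entry of $\lm(\mathfrak{f}_{2p-1})$ is the unique one to the right of and below that of $\lm(\mathfrak{f}_{2p-2})$, so that each successive sieve $\mathcal{S}^p_\nu$ filters out at least $\lm(\mathfrak{f}_{2p-\nu+1})$. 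Without exhibiting such a choice and verifying this filtering property, ``the algorithm terminates'' is an assertion, not a conclusion; indeed a poor choice of candidates can leave a sieve that filters nothing. You also need the explicit two-way Pl\"ucker decomposition producing the $\mathfrak{f}_k$ (one for $k>p+1$, one for $2p+3-k>p+1$ by color exchange) so that the candidate monomials $\mathfrak{j}_{abi}\mathfrak{k}_{cdi}$ actually occur with nonzero coefficient; none of this is supplied. Finally, note the paper takes the generating set and its minimality from~\cite[Le.~2.5, Proof of Thm.~0.2]{shmel} rather than rederiving it from the $T$-pairing, which spares you from having to prove that your graph realizations exhaust a minimal system.
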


\begin{lemma}\label{le:W1}
 Denote by $x$ the images of the covariants $c$ from Lemma~\ref{le:covL2+2Ln-1} in $\CC[V_1]^U$ and by $y$ those in $\CC[V_1]^{U^o}$.  The algebra  $A_1=(\CC[V_1]^U\otimes\CC[V_1]^{U^o})^T$ is minimally generated by $x^*_2, y^*_2$ and the entries of the matrices
 \setlength{\arraycolsep}{0.5mm}
 $$
 f^{(1)}:=\begin{bmatrix}
 x^{(1)}_2 \\
 x^{(2)}_2
 \end{bmatrix}
 \cdot
 \begin{bmatrix}
 y^{(1)}_1 & y^{(2)}_1 & y_p 
 \end{bmatrix}
 ,
 \quad
 f^{(2)}:=\begin{bmatrix}
 x_1 \\
 x_3^*
 \end{bmatrix}
 \cdot
 \begin{bmatrix}
 y^{(1)}_{p+1} & y^{(2)}_{p+1} & y_1^*
 \end{bmatrix}
 ,
 $$
 $$
 g^{(1)}:=\begin{bmatrix}
 y^{(1)}_2 \\
 y^{(2)}_2
 \end{bmatrix}
 \cdot
 \begin{bmatrix}
 x^{(1)}_1 & x^{(2)}_1 & x_p 
 \end{bmatrix}
 ,
 \quad
 g^{(2)}:=\begin{bmatrix}
 y_1 \\
 y_3^*
 \end{bmatrix}
 \cdot
 \begin{bmatrix}
 x^{(1)}_{p+1} & x^{(2)}_{p+1} & x_1^*
 \end{bmatrix}
 $$
 $$
h^{(k)}:=\begin{bmatrix}
 x_{2+k} \\
 x^*_{4+k}
 \end{bmatrix}
 \cdot
 \begin{bmatrix}
 y^{(1)}_{p-k} & y^{(2)}_{p-k} 
 \end{bmatrix}
 ,
 \quad
 i^{(k)}:=\begin{bmatrix}
 y_{2+k} \\
 y^*_{4+k}
 \end{bmatrix}
 \cdot
 \begin{bmatrix}
 x^{(1)}_{p-k} & x^{(2)}_{p-k} 
 \end{bmatrix}
 ,
 \quad
  k=0,\ldots,p-3.
 $$
The ideal of syzygies is generated by all $2\times 2$-minors  of the matrices $f^{(j)}$, $g^{(j)}$, $h^{(k)}$, $i^{(k)}$ and
$$
f^{(1)}_{a3}f^{(2)}_{b3}-p f^{(1)}_{a1}f^{(2)}_{b2}+p f^{(1)}_{a2}f^{(2)}_{b1}, 
\,
g^{(1)}_{a3}g^{(2)}_{b3}-p g^{(1)}_{a1}g^{(2)}_{b2}+p g^{(1)}_{a2}g^{(2)}_{b1},
\quad a,b \in \{1,2\}.
$$ 

 \end{lemma}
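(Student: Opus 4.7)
The strategy is to describe $A_1 = (\CC[V_1]^U \otimes \CC[V_1]^{U^o})^T$ explicitly by identifying $\CC[V_1]^U$ with $\Cov(\SL_{2p+1},V_1)$ via Lemma~\ref{le:covL2+2Ln-1}. Each covariant $c_k, c_k^{(a)}, c_k^*$ carries a definite $T$-weight, readable from the dummy (gray) edges of its graph: the $x$-copy in $\CC[V_1]^U$ has the highest weight of the associated $\SL_n$-module, and the $y$-copy in $\CC[V_1]^{U^o}$ has the corresponding lowest weight. A $T$-invariant monomial in the tensor product must therefore decompose into products of $x$- and $y$-covariants whose underlying modules are mutually dual, so the generators of $A_1$ are the invariant quadratic products $xy$, together with any individual generator of trivial weight.

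Going through the covariants of Lemma~\ref{le:covL2+2Ln-1}, the covariant $c_2^*$ (which has no nontrivial dummy edges) is already a genuine $\SL_n$-invariant on $V_1$, contributing the separate generators $x_2^*$ and $y_2^*$. All remaining generators of $A_1$ arise in dual-matching families, each giving the entries of a rank-one matrix: the families $\{c_2^{(a)}\}$ and $\{c_1^{(b)}, c_p\}$ produce $f^{(1)}$, the families $\{c_1, c_3^*\}$ and $\{c_{p+1}^{(a)}, c_1^*\}$ produce $f^{(2)}$, and the $g, h, i$ matrices are obtained analogously by the $x \leftrightarrow y$ symmetry and by pairing $c_{2+k}, c_{4+k}^*$ with $c_{p-k}^{(a)}$. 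A direct degree/weight count then shows that this list exhausts all $T$-invariant quadratic products, and that every higher-degree $T$-invariant is a polynomial in them.

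Two families of syzygies then arise. By construction, each of $f^{(j)}, g^{(j)}, h^{(k)}, i^{(k)}$ is the outer product of a column vector and a row vector, so it has rank one and all its $2\times 2$ minors vanish identically; this accounts for the binomial syzygies. The trinomial syzygies come from the unique covariant relation of Lemma~\ref{le:covL2+2Ln-1}: multiplying its $y$-version
$$y_p\, y_1^* - p\, y_1^{(1)} y_{p+1}^{(2)} + p\, y_1^{(2)} y_{p+1}^{(1)} = 0$$
by $x_2^{(a)} x_1$ for $b=1$ and by $x_2^{(a)} x_3^*$ for $b=2$, and regrouping the factors into entries of $f^{(1)}$ and $f^{(2)}$, yields $f^{(1)}_{a3} f^{(2)}_{b3} - p\, f^{(1)}_{a1} f^{(2)}_{b2} + p\, f^{(1)}_{a2} f^{(2)}_{b1} = 0$. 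Symmetrically, the $x$-side covariant relation produces the $g$-syzygies.

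The main obstacle is completeness, i.e., ruling out further independent generators or syzygies. The plan is to compute the bigraded Hilbert series of $\CC[V_1]^U \otimes \CC[V_1]^{U^o}$ from Lemma~\ref{le:covL2+2Ln-1}, extract its $T$-invariant part (this is the Hilbert series of $A_1$), and match it against the Hilbert series of the polynomial ring on the listed generators modulo the listed ideal of syzygies. Because $\Cov(\SL_{2p+1},V_1)$ is a hypersurface with a single defining relation, every relation in $A_1$ must descend either from a rank-one constraint on one of the six matrices or from pulling back a multiple of this one covariant relation across the tensor product; the Hilbert series matching makes this descent quantitative and confirms that the listed generators and syzygies are exhaustive.
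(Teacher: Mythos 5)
Your proposal is correct and follows essentially the same route as the paper, whose own proof is just a two-sentence reference to the $T$-weights of the covariants (read off from Brion's table) and to the weight-monoid argument of \cite[Proof of Thm.~1.8]{ltpticr}: the generators of $A_1$ are the $T$-invariant quadratic products of dual-weight covariants plus the weight-zero covariant $c_2^*$, the binomial syzygies are the rank-one minors, and the trinomial syzygies are the two hypersurface relations of $\Cov(\SL_{2p+1},V_1)$ multiplied into invariant degree --- exactly as you describe, including the correct regrouping $x_2^{(a)}x_1\,(y_py_1^*-p\,y_1^{(1)}y_{p+1}^{(2)}+p\,y_1^{(2)}y_{p+1}^{(1)})$ into entries of $f^{(1)}$ and $f^{(2)}$.
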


\begin{proof}
Under the isomorphism between $\Cov(\SL_n,V_1)$ and $\CC[V_1]^{U}$, the images of the covariants from Lemma~\ref{le:covL2+2Ln-1} have weights according to~\cite[Table 1]{brionRed}. From this, we directly deduce generators of the ring of invariants and the ideal of syzygies, similar as in~\cite[Proof of Thm. 1.8]{ltpticr}.

\end{proof}

 \begin{proof}[Proof of Prop.~\ref{prop:W1}]
 The list of generators can be extracted from~\cite[Le. 2.5]{shmel}, it is minimal due to~\cite[Proof of Thm. 0.2]{shmel}.
 We come to the syzygies.
 Let us first have a look at the Hilbert series of $A_1$ with respect to the $\ZZ^6$-grading of $W_1$, where variables $t_i$ correspond to copies of $\BigWedge^2V$ and $s_i$ to $\BigWedge^{n-1}V$ respectively. Lemma~\ref{le:W1} provides us with generators and syzygies. We have 
\begin{align*}
 A_1 = 
\CC[x^*_2, y^*_2] & \otimes  \CC[x_1,x^*_3,x^{(1)}_2,x^{(2)}_2,y_p,y^*_1,y^{(1)}_1,y^{(2)}_1,y^{(1)}_{p+1},y^{(2)}_{p+1}]^T  \\
 & \otimes  \CC[y_1,y^*_3,y^{(1)}_2,y^{(2)}_2,x_p,x^*_1,x^{(1)}_1,x^{(2)}_1,x^{(1)}_{p+1},x^{(2)}_{p+1}]^T \\
 & \otimes  \bigotimes_{k=0}^{p-3}\CC[x_{2+k},x^*_{4+k},y^{(1)}_{p-k},y^{(2)}_{p-k}]^T \\  
  & \otimes \bigotimes_{k=0}^{p-3}\CC[y_{2+k},y^*_{4+k},x^{(1)}_{p-k},x^{(2)}_{p-k}]^T.
    \end{align*}
The first three algebras have the Hilbert series
$${\scriptstyle
\frac{1}{(1-t_1s_1s_2)(1-t_2s_3s_4)},
}
$$
$$
{\scriptstyle
\frac{(1-t_1^2t_2^ps_1s_3s_4)(1-t_1^2t_2^ps_2s_3s_4)}{(1-t_1s_1s_3)(1-t_1s_1s_4)(1-t_1s_2s_3)(1-t_1s_2s_4)(1-t_1t_2^ps_1)(1-t_1t_2^ps_2)(1-t_1t_2^ps_3)(1-t_1t_2^ps_4)(1-t_1s_3s_4)},
}
$$   
 $$
{\scriptstyle
\frac{(1-t_1^pt_2^2s_1s_2s_3)(1-t_1^pt_2^2s_1s_2s_4)}{(1-t_2s_1s_3)(1-t_2s_2s_3)(1-t_2s_1s_4)(1-t_2s_2s_4)(1-t_1^pt_2s_3)(1-t_1^pt_2s_4)(1-t_1^pt_2s_1)(1-t_1^pt_2s_2)(1-t_2s_1s_2)},
}
$$ 
while the $k$-th factors of the fourth and fifth algebra have the respective Hilbert series
$$
{\scriptstyle
\frac{(1-t_1^{5+2k}t_2^{2p-2k-2}s_1s_2s_3s_4)}{(1-t_1^{2+k}t_2^{p-k-1}s_3)(1-t_1^{2+k}t_2^{p-k-1}s_4)(1-t_1^{3+k}t_2^{p-k-1}s_1s_2s_3)(1-t_1^{3+k}t_2^{p-k-1}s_1s_2s_4)},
}
$$
$$
{\scriptstyle
\frac{(1-t_2^{5+2k}t_1^{2p-2k-2}s_1s_2s_3s_4)}{(1-t_2^{2+k}t_1^{p-k-1}s_1)(1-t_2^{2+k}t_1^{p-k-1}s_2)(1-t_2^{3+k}t_1^{p-k-1}s_1s_3s_4)(1-t_2^{3+k}t_1^{p-k-1}s_2s_3s_4)}.
}
$$

Now we distinguish between the cases $p=2,3,\geq 4$ as they behave differently.
For $p=2$, the Hilbert series of $\CC[W_1]^{\SL_5}$ is the product of those of the first three algebras from above. Each factor in the denominator corresponds to one generator. Now  consider the graphs
\begin{center}
\begin{tikzpicture}[baseline=(A),outer sep=0pt,inner sep=0pt]

\node (A) at (0,0) {};

\twoeARB{0}{0.4}{0}{0}{2}{right=1pt};
\foureu{-0.7}{0}{a}
\foureu{0}{0.4}{b}
\foureu{0.7}{0}{c}
\twoeABS{0}{-0.7}{1}
\twoeABS{0}{0.7}{2}
\twoed{0}{1}
\end{tikzpicture}
\gS{,abc \in \{234,134,124,123\}.}
\end{center}
Applying the Pl\"ucker relation to the two edges of color $1$, we get 
\begin{center}
\gS{2}
\begin{tikzpicture}[baseline=(A),outer sep=0pt,inner sep=0pt]

\node (A) at (0,0) {};

\twoeARB{0}{0.4}{0}{0}{2}{right=1pt};
\foureu{-0.7}{0}{a}
\foureu{0}{0.4}{b}
\foureu{0.7}{0}{c}
\twoeABS{0}{-0.7}{1}
\twoeABS{0}{0.7}{2}
\twoed{0}{1}
\end{tikzpicture}
\gS{=}
\gS{-}
\begin{tikzpicture}[baseline=(A),outer sep=0pt,inner sep=0pt]
\node (A) at (0,0) {};
\foureu{0}{0}{a}
\twoeABS{0}{0.7}{2}
\foureu{0.7}{0}{b}
\end{tikzpicture}
\begin{tikzpicture}[baseline=(A),outer sep=0pt,inner sep=0pt]
\node (A) at (0,0) {};
\foureu{0.7}{0}{c}
\twoeABS{0}{0.7}{2}
\twoe{0}{1}
\twoed{0}{1}
\end{tikzpicture}
\gS{+}
\begin{tikzpicture}[baseline=(A),outer sep=0pt,inner sep=0pt]
\node (A) at (0,0) {};
\foureu{0}{0}{a}
\twoeABS{0}{0.7}{2}
\foureu{0.7}{0}{c}
\end{tikzpicture}
\begin{tikzpicture}[baseline=(A),outer sep=0pt,inner sep=0pt]
\node (A) at (0,0) {};
\foureu{0.7}{0}{b}
\twoeABS{0}{0.7}{2}
\twoe{0}{1}
\twoed{0}{1}
\end{tikzpicture}
\gS{.}
\end{center}
On the other hand, if we apply the Pl\"ucker relation to the right edge of color 2, we get 
\begin{center}
\begin{tikzpicture}[baseline=(A),outer sep=0pt,inner sep=0pt]

\node (A) at (0,0) {};

\twoeARB{0}{0.4}{0}{0}{2}{right=1pt};
\foureu{-0.7}{0}{a}
\foureu{0}{0.4}{b}
\foureu{0.7}{0}{c}
\twoeABS{0}{-0.7}{1}
\twoeABS{0}{0.7}{2}
\twoed{0}{1}
\end{tikzpicture}
\gS{=}
\gS{-}
\begin{tikzpicture}[baseline=(A),outer sep=0pt,inner sep=0pt]

\node (A) at (0,0) {};

\twoeARB{0}{0.4}{0}{0}{2}{right=1pt};
\foureu{-0.7}{0}{a}
\foureu{0}{0.4}{b}
\foureu{0.7}{0}{c}
\twoeABS{0}{-0.7}{1}
\twoeABS{0}{0.7}{1}
\twoed{0}{2}
\end{tikzpicture}
\gS{+}
\begin{tikzpicture}[baseline=(A),outer sep=0pt,inner sep=0pt]
\node (A) at (0,0) {};
\foureu{0}{0}{a}
\twoeABS{0}{0.7}{1}
\foureu{0.7}{0}{c}
\end{tikzpicture}
\begin{tikzpicture}[baseline=(A),outer sep=0pt,inner sep=0pt]
\node (A) at (0,0) {};
\foureu{0.7}{0}{b}
\twoeABS{0}{0.7}{2}
\twoe{0}{1}
\twoed{0}{2}
\end{tikzpicture}
\gS{-}
\begin{tikzpicture}[baseline=(A),outer sep=0pt,inner sep=0pt]
\node (A) at (0,0) {};
\foureu{0}{0}{b}
\twoeABS{0}{0.7}{2}
\foureu{0.7}{0}{c}
\end{tikzpicture}
\begin{tikzpicture}[baseline=(A),outer sep=0pt,inner sep=0pt]
\node (A) at (0,0) {};
\foureu{0.7}{0}{a}
\twoeABS{0}{0.7}{1}
\twoe{0}{1}
\twoed{0}{2}
\end{tikzpicture}
\gS{.}
\end{center}
Finally applying the Pl\"ucker relation to the vertical edge of color 2 in the first graph on the right and on the two edges of same color in the other ones, we get the syzygy
$$
\mathfrak{i}_{ab1}\mathfrak{j}_{12c}-\mathfrak{i}_{ac1}\mathfrak{j}_{12b}+\mathfrak{i}_{bc1}\mathfrak{j}_{12a}+\mathfrak{i}_{ab2}\mathfrak{j}_{21c}-\mathfrak{i}_{ac2}\mathfrak{j}_{21b}+\mathfrak{i}_{bc2}\mathfrak{j}_{21a}.
$$
The Hilbert series of the ideal generated by the resulting four syzygies for all combinations of $a,b,c$ coincides with that of $A_1$, so we are done with the case $p=2$.

In the case $p=3$, since the homological dimension is two, the c.i. property follows by~\cite[Le. 5.1]{shmel}, concrete syzygies can be obtained in the same way as for $p\geq 4$ in the following.  So consider $p\geq 4$. Here in the decomposition of $A_1$ from above, apart from the first three algebras, we have one factor for each $k=0,\ldots,p-3$ in the fourth and fifth one. In the resulting Hilbert series, the factors $(1-t_i^pt_j^2s_as_bs_c)$ in the denominator and numerator cancel out and in the reduced fraction, again all denominator factors correspond to generators. So we need syzygies corresponding to $(1-t_1^{k}t_2^{2p+3-k}s_1s_2s_3s_4)$, $k=4,\ldots,2p-1$. One of $k$ and $2p+3-k$ is always greater than $p+1$. Let us assume that $k > p+1$ in the following. Consider the graphs 
\begin{center}
\begin{tikzpicture}[baseline=(A),outer sep=0pt,inner sep=0pt]

\node (A) at (0,0) {};

\twoeARB{-1.05}{0}{0}{0}{1}{above=1pt};
\twoeARB{1.05}{0}{0}{0}{2}{above=1pt};
\twoeARB{-0.7}{0.7}{0}{0}{1}{left=2pt};
\twoeARB{0}{0.7}{0}{0}{2}{left=1pt};
\twoeARB{0.7}{0.7}{0}{0}{2}{left=2pt};
\staru{-1.05}{0}{1}
\staru{-0.7}{0.7}{2}
\staru{0}{0.7}{3}
\staru{0.7}{0.7}{4}

\begin{scope}[rotate around={180:(0,0)}]
\twostr{-0.2}{0.2}{1};
\draw[line width=0.7pt, draw=black,dotted]  (-0.45,0.4) -- (-0.25,0.4) ;
\twostr{-1}{0.4}{};
\node[font=\tiny,align=center] at (-0.8,0.6) {$\iiE{1}$};

\twostr{0.2}{0.2}{2};
\draw[line width=0.7pt, draw=black,dotted]  (0.45,0.4) -- (0.25,0.4) ;
\twostr{1}{0.4}{};
\node[font=\tiny,align=center] at (0.8,0.6) {$\iiE{2}$};

\end{scope}

\begin{scope}[shift={(1.05,0)}]
\begin{scope}[rotate around={-90:(0,0)}]
\twostr{-0.5}{0.4}{1};
\draw[line width=0.7pt, draw=black,dotted]  (-0.15,0.4) -- (0.15,0.4) ;

\twostr{0.5}{0.4}{1};

\end{scope}
\end{scope}
\end{tikzpicture}
\end{center}
These graphs are decomposable in \emph{two ways}. The first one is applying the Pl\"ucker relation on all two-edges of color $2$ but those two going up. In the resulting sum, all graphs are either disconnected or evaluate to zero, since they have a vertex with $p$ looping two-edges of color $1$ and one additional non-looping two-edge of color $1$. For the second one, we observe that we can move a looping two-edge of color $1$ from the rightmost to the central vertex by first applying the Pl\"ucker relation to all edges connected to the rightmost vertex - now the central and rightmost vertex are connected by a two-edge of color $1$ - and then applying it to this connecting edge and all looping  edges of color $1$ at the central vertex. Iterating this procedure gives us a sum of disconnected graphs plus the graph
\begin{center}
\begin{tikzpicture}[baseline=(A),outer sep=0pt,inner sep=0pt]

\node (A) at (0,0) {};

\twoeARB{-1.05}{0}{0}{0}{1}{above=1pt};
\twoeARB{1.05}{0}{0}{0}{2}{above=1pt};
\twoeARB{-0.7}{0.7}{0}{0}{1}{left=2pt};
\twoeARB{0}{0.7}{0}{0}{2}{left=1pt};
\twoeARB{0.7}{0.7}{0}{0}{2}{left=2pt};
\staru{-1.05}{0}{1}
\staru{-0.7}{0.7}{2}
\staru{0}{0.7}{3}
\staru{0.7}{0.7}{4}

\begin{scope}[shift={(1.05,0)}]
\begin{scope}[rotate around={-90:(0,0)}]
\twostr{-0.2}{0.2}{1};
\draw[line width=0.7pt, draw=black,dotted]  (-0.45,0.4) -- (-0.25,0.4) ;
\twostr{-1}{0.4}{};
\node[font=\tiny,align=center] at (-0.8,0.6) {$\iiE{1}$};

\twostr{0.2}{0.2}{2};
\draw[line width=0.7pt, draw=black,dotted]  (0.45,0.4) -- (0.25,0.4) ;
\twostr{1}{0.4}{};
\node[font=\tiny,align=center] at (0.8,0.6) {$\iiE{2}$};

\end{scope}
\end{scope}

\begin{scope}[shift={(0,0)}]
\begin{scope}[rotate around={180:(0,0)}]
\twostr{-0.5}{0.4}{1};
\draw[line width=0.7pt, draw=black,dotted]  (-0.15,0.4) -- (0.15,0.4) ;

\twostr{0.5}{0.4}{1};

\end{scope}
\end{scope}
\end{tikzpicture}
\end{center}
Now finally applying in this graph the Pl\"ucker relation to all two-edges of color $1$ looping at the central vertex and the one that connects to the $(n-1)$-edge of color $1$, we see that it is decomposable as well. The resulting relation has the form
$$
\mathfrak{f}_{k}:=
\left(
\sum_{i=1}^{4} \sum_{
\begin{smallmatrix}
a+c=k \\
 b+d=2p+3-k 
\end{smallmatrix}} 
\mathfrak{j}_{abi} \mathfrak{k}_{cdi}
\right)
+
\left(
\sum_{(ij,l,m)\vdash(1234)}
 \sum_{
\begin{smallmatrix}
e=1,2\\
 a+c=k-\delta_{e1} \\
 b+d=2p+3-k-\delta_{e2}
\end{smallmatrix}} 
\mathfrak{i}_{ije}\mathfrak{j}_{abl} \mathfrak{j}_{cdm}
\right),
$$
where we ignore coefficients as they are not important for what follows. The notation $(ij,k,l)\vdash(1234)$ means that we sum over all subdivisions of the word $1234$ in two words of length one and one of length two, i.e. we have twelve summands. 

The same procedure  applies for $2p+3-k > p+1$ by interchanging two-edges of colors $1$ and $2$, yielding respective syzygies in this case as well.

What remains is to show that the $\mathfrak{f}_k$ are a Gr\"obner basis for the ideal of syzygies of $\CC[W_1]^{\SL_n}$. Consider the $\ZZ^{4n}$ grading of $\CC[W_1]^{\SL_n}$ given by sending $\mathfrak{i}_{abc}$, $\mathfrak{j}_{abc}$, $\mathfrak{k}_{abc}$ to respective basis elements $e^\mathfrak{i}_{abc}$, $e^\mathfrak{j}_{abc}$, $e^\mathfrak{k}_{abc}$. Denote the dual basis of $(\ZZ^{4n})^*$ by $x^\mathfrak{i}_{abc}$, $x^\mathfrak{j}_{abc}$, $x^\mathfrak{k}_{abc}$. 

Now we need to find a monomial grading so that the leading monomials of each $\mathfrak{f}_k$ have no variable in common. 
Let us construct a matrix $\mathcal{M}$ with $4n$ columns and of rank $4n$, so that the associated grading meets our requirements.
Our first goal is that the monomials $\mathfrak{j}_{abi} \mathfrak{k}_{cdi}$ are greater than the $\mathfrak{i}_{ije}\mathfrak{j}_{abk} \mathfrak{j}_{cdl}$. This is achieved by setting the first row $\mathcal{M}_1$ of $\mathcal{M}$ to
$$
\mathcal{M}_1:=\sum x^\mathfrak{i}_{abc} + \sum x^\mathfrak{j}_{abc} + \sum 3 x^\mathfrak{k}_{abc}.
$$
Now we present an algorithm determining a monomial grading satisfying our needs.

\noindent\rule[0.5ex]{\linewidth}{1pt}
\begin{algorithm}[\emph{Crosshair-sieve}]\label{algCS}
We see each row of the matrix $\mathcal{M}$ as a sieve (of increasing fineness), that  filters out some of the monomials we want to be the leading ones of the $\mathfrak{f}_k$.

Now for some row $\mathcal{M}_\nu$   consider a $(p-1)\times(p-4)\times 4$-matrix (or tensor if you want) $\mathcal{S}_\nu$, where in the $(r,s,t)$-th entry stands the degree of  $\mathfrak{j}_{r(p+1-r)t}\mathfrak{k}_{(s+2)(p-s)t}$. We say that the first entry gives the \emph{row}, the second one the \textit{column} and the third one the \textit{level} of the \emph{sieve} $\mathcal{S}_\nu$. So entries in the same level and row correspond to monomials sharing some $\mathfrak{j}_{r(p+1-r)t}$ and such in the same level and column some $\mathfrak{k}_{(s+2)(p-s)t}$ respectively.
Observe that the degrees of all monomials of some $\mathfrak{f}_k$ stand in the counterdiagonal of $\mathcal{S}_\nu$ given by $r+s+2=k$ (remember $k=4,\ldots,2p-1$).

We begin with $\mathcal{M}_\nu=0$. Now if we want a monomial $\mathfrak{j}_{abi} \mathfrak{k}_{cdi}$ to be filtered out (i.e. to be the one of highest degree of some $\mathfrak{f}_{a+c}$), we target this monomial setting 
$$
\mathcal{M}_\nu:=\mathcal{M}_\nu + x^\mathfrak{j}_{abi} + x^\mathfrak{k}_{cdi}.
$$
At the $t$-th level of $\mathcal{S}_\nu$, this looks like a crosshair. Exemplarily for the case  $p=9$, we have this picture:
\begin{center}
\begin{tikzpicture}
\matrix[matrix of math nodes,  nodes={draw,minimum size=5mm},row sep=-\pgflinewidth,column sep=-\pgflinewidth]
{
0 & 0 & 0 & 1 & 0 \\
0 & 0 & 0 & 1 & 0 \\
1 & 1 & 1 & 2 & 1 \\
0 & 0 & 0 & 1 & 0 \\
0 & 0 & 0 & 1 & 0 \\
0 & 0 & 0 & 1 & 0 \\
0 & 0 & 0 & 1 & 0 \\
0 & 0 & 0 & 1 & 0 \\
};
\end{tikzpicture}
\end{center}
Now $\mathfrak{j}_{abi} \mathfrak{k}_{cdi}$ is the leading monomial - with degree two - of $\mathfrak{f}_{a+c}$, but we have many leading monomials (of degree one) of other $\mathfrak{f}_k$'s that share a variable, which we do not want. So we have to target all other desired leading monomials in the same way. If this happens at another level of $\mathcal{S}_\nu$, all is fine, but if two desired leading monomials are at the same level - and this must happen for $p \geq 3$, since we only have four levels but need $2p-4$ leading monomials - we get unwanted crossings, as for example the italic ones in:
\begin{center}
\begin{tikzpicture}
\matrix[matrix of math nodes,  nodes={draw,minimum size=5mm},row sep=-\pgflinewidth,column sep=-\pgflinewidth]
{
1 & 0 & 0 & 1 & 0 \\
1 & 0 & 0 & 1 & 0 \\
\mathit{2} & 1 & 1 & 2 & 1 \\
1 & 0 & 0 & 1 & 0 \\
2 & 1 & 1 & \mathit{2} & 1 \\
1 & 0 & 0 & 1 & 0 \\
1 & 0 & 0 & 1 & 0 \\
1 & 0 & 0 & 1 & 0 \\
};
\end{tikzpicture}
\end{center}
So it may happen that in an $\mathfrak{f}_k$ some unwanted monomials are of the same degree as the desired leading polynomials with respect to $\mathcal{M}_\nu$, so the sieve $\mathcal{S}_\nu$ is \emph{too coarse}. But as long as it filters out at least one desired leading monomial, we do not have to target this one in the following sieve $\mathcal{S}_{\nu+1}$, so we get lesser unwanted crossings and may be able to filter out another desired leading monomial. \emph{So if at each stage we can filter out at least one, the algorithm will terminate with the desired set of leading monomials}. If the resulting matrix is not of full rank, we add rows to achieve full rank and found the desired  grading.
 
\end{algorithm}

\noindent\rule[0.5ex]{\linewidth}{1pt}

The termination of the algorithm obviously depends on the choice of the desired leading monomials, which results in a choice of arrangements of crosshairs eventually leading to a sieve that does not filter any desired leading monomial.

So we need to find a set of monomials for which the algorithm terminates. We do this by induction in $p$. Let $\mathcal{M}^p$ be the grading matrix for respective $p$ with first row $\mathcal{M}_1^p:=\sum x^\mathfrak{i}_{abc} + \sum x^\mathfrak{j}_{abc} + \sum 3 x^\mathfrak{k}_{abc}$ as defined above. Let $\mathcal{S}^p$ be the associated collection of sieves with $\mathcal{S}^p_i$ corresponding to $\mathcal{M}^p_i$. For $p=4$, apart from permuting the last index, we only have one choice for the leading monomials $\lm$:
$$
\lm(\mathfrak{f}_4)=\mathfrak{j}_{1p1}\mathfrak{k}_{3(p-1)1},
\quad
\lm(\mathfrak{f}_5)=\mathfrak{j}_{2(p-1)2}\mathfrak{k}_{3(p-1)2},
\quad
\lm(\mathfrak{f}_6)=\mathfrak{j}_{3(p-2)3}\mathfrak{k}_{3(p-1)3},
$$
$$
\lm(\mathfrak{f}_7)=\mathfrak{j}_{4(p-3)4}\mathfrak{k}_{3(p-1)4}.
$$
Applying Algorithm~\ref{algCS} to this choice, the sieve $\mathcal{S}^4_2$ becomes
\begin{center}
\begin{tikzpicture}
\node[matrix,  nodes={fill=white,draw,minimum size=3mm},row sep=-\pgflinewidth,column sep=-\pgflinewidth] (Leva) at (0,0)
{
\node(a1) {$2$}; \\
\node(a2) {$1$}; \\
\node(a3) {$1$}; \\
\node(a4) {$1$}; \\
};

\node[matrix,  nodes={fill=white,draw,minimum size=3mm},row sep=-\pgflinewidth,column sep=-\pgflinewidth] (Leva) at (0.5,0.1)
{
\node(b1) {$1$}; \\
\node(b2) {$2$}; \\
\node(b3) {$1$}; \\
\node(b4) {$1$}; \\
};

\node[matrix,  nodes={fill=white,draw,minimum size=3mm},row sep=-\pgflinewidth,column sep=-\pgflinewidth] (Leva) at (1,0.2)
{
\node(c1) {$1$}; \\
\node(c2) {$1$}; \\
\node(c3) {$2$}; \\
\node(c4) {$1$}; \\
};
\node[matrix,  nodes={fill=white,draw,minimum size=3mm},row sep=-\pgflinewidth,column sep=-\pgflinewidth] (Leva) at (1.5,0.3)
{
\node(d1) {$1$}; \\
\node(d2) {$1$}; \\
\node(d3) {$1$}; \\
\node(d4) {$2$}; \\
};
\draw[dashed,opacity=0.9] (a1.north east) -- (d1.north east);
\draw[dashed,opacity=0.9] (a1.north west) -- (d1.north west);
\draw[dashed,opacity=0.9] (a4.south east) -- (d4.south east);
\begin{pgfonlayer}{background}
\draw[dashed, opacity=0.9] (a4.south west) -- (d4.south west);
\end{pgfonlayer}
\end{tikzpicture}
\end{center}
This sieve already filters out our desired monomials.
We describe the first iteration step in detail: for $p=5$, the sieves have one additional column and row, so they are of the form
\begin{center}
\begin{tikzpicture}
\node[matrix,  nodes={fill=white,draw,minimum size=2mm},row sep=-\pgflinewidth,column sep=-\pgflinewidth] (Leva) at (0,0)
{
\node(a1) {}; & \node[fill=lightgray] (a12) {}; \\
\node(a2) {}; & \node[fill=lightgray] {}; \\
\node(a3) {};  & \node[fill=lightgray] {}; \\
\node(a4) {}; & \node[fill=lightgray] {}; \\
\node[fill=lightgray](a5) {}; & \node[fill=lightgray] (a52) {}; \\
};

\node[matrix,  nodes={fill=white,draw,minimum size=2mm},row sep=-\pgflinewidth,column sep=-\pgflinewidth] (Leva) at (1,0.2)
{
\node(b1) {}; & \node[fill=lightgray] {}; \\
\node(b2) {}; & \node[fill=lightgray] {}; \\
\node(b3) {}; & \node[fill=lightgray] {}; \\
\node(b4) {}; & \node[fill=lightgray] {}; \\
\node[fill=lightgray](b5) {}; & \node[fill=lightgray] {}; \\
};

\node[matrix,  nodes={fill=white,draw,minimum size=2mm},row sep=-\pgflinewidth,column sep=-\pgflinewidth] (Leva) at (2,0.4)
{
\node(c1) {}; & \node[fill=lightgray] {}; \\
\node(c2) {}; & \node[fill=lightgray] {}; \\
\node(c3) {}; & \node[fill=lightgray] {}; \\
\node(c4) {}; & \node[fill=lightgray] {}; \\
\node[fill=lightgray](c5) {}; & \node[fill=lightgray] {}; \\
};
\node[matrix,  nodes={fill=white,draw,minimum size=2mm},row sep=-\pgflinewidth,column sep=-\pgflinewidth] (Leva) at (3,0.6)
{
\node(d1) {}; & \node[fill=lightgray](d12) {}; \\
\node(d2) {}; & \node[fill=lightgray] {}; \\
\node(d3) {}; & \node[fill=lightgray] {}; \\
\node(d4) {}; & \node[fill=lightgray] {}; \\
\node[fill=lightgray](d5) {}; & \node[fill=lightgray](d52) {}; \\
};
\draw[dashed,opacity=0.9] (a12.north east) -- (d12.north east);
\draw[dashed,opacity=0.9] (a1.north west) -- (d1.north west);
\draw[dashed,opacity=0.9] (a52.south east) -- (d52.south east);
\begin{pgfonlayer}{background}
\draw[dashed, opacity=0.9] (a5.south west) -- (d5.south west);
\end{pgfonlayer}
\end{tikzpicture}
\end{center}
Observe that all entries on the counterdiagonal planes  correspond to monomials of one $\mathfrak{f}_k$. 
For $\mathfrak{f}_4,\ldots,\mathfrak{f}_7$, we keep the choice of leading monomials, while for $\mathfrak{f}_8$ and $\mathfrak{f}_9$ we choose
$$
\lm(\mathfrak{f}_8)=\mathfrak{j}_{4(p-3)1}\mathfrak{k}_{4(p-2)1},
\quad
\lm(\mathfrak{f}_9)=\mathfrak{j}_{5(p-4)2}\mathfrak{k}_{4(p-2)2},
$$
which results in $\mathcal{S}^5_2$ (with italic unwanted highest degrees) becoming
\begin{center}
\begin{tikzpicture}
\node[matrix,  nodes={fill=white,draw,minimum size=5mm},row sep=-\pgflinewidth,column sep=-\pgflinewidth] (Leva) at (0,0)
{
\node(a1) {$2$}; & \node (a12) {$\mathit{2}$}; \\
\node(a2) {$1$}; & \node {$1$}; \\
\node(a3) {$1$};  & \node {$1$}; \\
\node(a4) {$\mathit{2}$}; & \node {$2$}; \\
\node(a5) {$1$}; & \node (a52) {$1$}; \\
};

\node[matrix,  nodes={fill=white,draw,minimum size=5mm},row sep=-\pgflinewidth,column sep=-\pgflinewidth] (Leva) at (1.2,0.2)
{
\node(b1) {$1$}; & \node {$1$}; \\
\node(b2) {$2$}; & \node {$\mathit{2}$}; \\
\node(b3) {$1$}; & \node {$1$}; \\
\node(b4) {$1$}; & \node {$1$}; \\
\node(b5) {$\mathit{2}$}; & \node {$2$}; \\
};

\node[matrix,  nodes={fill=white,draw,minimum size=5mm},row sep=-\pgflinewidth,column sep=-\pgflinewidth] (Leva) at (2.4,0.4)
{
\node(c1) {$1$}; & \node {$0$}; \\
\node(c2) {$1$}; & \node {$0$}; \\
\node(c3) {$2$}; & \node {$1$}; \\
\node(c4) {$1$}; & \node {$0$}; \\
\node(c5) {$1$}; & \node {$0$}; \\
};
\node[matrix,  nodes={fill=white,draw,minimum size=5mm},row sep=-\pgflinewidth,column sep=-\pgflinewidth] (Leva) at (3.6,0.6)
{
\node(d1) {$1$}; & \node(d12) {$0$}; \\
\node(d2) {$1$}; & \node {$0$}; \\
\node(d3) {$1$}; & \node {$0$}; \\
\node(d4) {$2$}; & \node {$1$}; \\
\node(d5) {$1$}; & \node(d52) {$0$}; \\
};
\draw[dashed,opacity=0.9] (a12.north east) -- (d12.north east);
\draw[dashed,opacity=0.9] (a1.north west) -- (d1.north west);
\draw[dashed,opacity=0.9] (a52.south east) -- (d52.south east);
\begin{pgfonlayer}{background}
\draw[dashed, opacity=0.9] (a5.south west) -- (d5.south west);
\end{pgfonlayer}
\end{tikzpicture}
\end{center}
We see that $\lm(\mathfrak{f}_9)=\mathfrak{j}_{5(p-4)2}\mathfrak{k}_{4(p-2)2}$ and $\lm(\mathfrak{f}_4)=\mathfrak{j}_{1p1}\mathfrak{k}_{3(p-1)1}$ are filtered out and so in the next sieve $\mathcal{S}^5_3$ we do not have to target them, which results in
\begin{center}
\begin{tikzpicture}
\node[matrix,  nodes={fill=white,draw,minimum size=5mm},row sep=-\pgflinewidth,column sep=-\pgflinewidth] (Leva) at (0,0)
{
\node(a1) {$0$}; & \node (a12) {$1$}; \\
\node(a2) {$0$}; & \node {$1$}; \\
\node(a3) {$0$};  & \node {$1$}; \\
\node(a4) {$1$}; & \node {$2$}; \\
\node(a5) {$0$}; & \node (a52) {$1$}; \\
};

\node[matrix,  nodes={fill=white,draw,minimum size=5mm},row sep=-\pgflinewidth,column sep=-\pgflinewidth] (Leva) at (1.2,0.2)
{
\node(b1) {$1$}; & \node {$0$}; \\
\node(b2) {$2$}; & \node {$1$}; \\
\node(b3) {$1$}; & \node {$0$}; \\
\node(b4) {$1$}; & \node {$0$}; \\
\node(b5) {$1$}; & \node {$0$}; \\
};

\node[matrix,  nodes={fill=white,draw,minimum size=5mm},row sep=-\pgflinewidth,column sep=-\pgflinewidth] (Leva) at (2.4,0.4)
{
\node(c1) {$1$}; & \node {$0$}; \\
\node(c2) {$1$}; & \node {$0$}; \\
\node(c3) {$2$}; & \node {$1$}; \\
\node(c4) {$1$}; & \node {$0$}; \\
\node(c5) {$1$}; & \node {$0$}; \\
};
\node[matrix,  nodes={fill=white,draw,minimum size=5mm},row sep=-\pgflinewidth,column sep=-\pgflinewidth] (Leva) at (3.6,0.6)
{
\node(d1) {$1$}; & \node(d12) {$0$}; \\
\node(d2) {$1$}; & \node {$0$}; \\
\node(d3) {$1$}; & \node {$0$}; \\
\node(d4) {$2$}; & \node {$1$}; \\
\node(d5) {$1$}; & \node(d52) {$0$}; \\
};
\draw[dashed,opacity=0.9] (a12.north east) -- (d12.north east);
\draw[dashed,opacity=0.9] (a1.north west) -- (d1.north west);
\draw[dashed,opacity=0.9] (a52.south east) -- (d52.south east);
\begin{pgfonlayer}{background}
\draw[dashed, opacity=0.9] (a5.south west) -- (d5.south west);
\end{pgfonlayer}
\end{tikzpicture}
\end{center}
So $\mathcal{S}^5_3$ filters out the remaining leading monomials and we are done.
Now for general $p$, choose the leading monomials for $\mathfrak{f}_4,\ldots,\mathfrak{f}_{2p-3}$ as for $p-1$ and 
$$
\lm(\mathfrak{f}_{2p-2})=\mathfrak{j}_{(p-1)21}\mathfrak{k}_{(p-1)31},
\quad
\lm(\mathfrak{f}_{2p-1})=\mathfrak{j}_{p12}\mathfrak{k}_{(p-1)32}.
$$
With this choice, the desired leading monomial of $\mathfrak{f}_{2p-1}$ (among others, but this is just a bonus) is obviously filtered out by $\mathcal{S}_2^p$.
In $\mathcal{S}_3^p$, it must not be targeted any more, and since the corresponding entry is the only one right and below the one of $\lm(\mathfrak{f}_{2p-2})$, at least this will now be filtered out by $\mathcal{S}_3^p$. So every $\mathcal{S}_\nu^p$ will filter out at least $\lm(\mathfrak{f}_{2p-\nu+1})$, which means that we are done. The $\mathfrak{f}_k$ with respective leading monomials are a Gr\"obner basis of the ideal of relations of $\CC[W_1]^{\SL_n}$ and thus it is a complete intersection.

 \end{proof}

\begin{remark}
In fact, we only need one row of $\mathcal{M}$ to achieve our desired leading monomials by subsequently scaling the $\mathcal{M}_\nu$ with highest index $\nu$ by a factor smaller than one and adding it to $\mathcal{M}_{\nu-1}$. With factor $\frac{1}{2}$, in the case $p=5$ from above, the one remaining sieve then becomes
\begin{center}
\begin{tikzpicture}
\node[matrix,  nodes={fill=white,draw,minimum size=5.5mm},row sep=-\pgflinewidth,column sep=-\pgflinewidth] (Leva) at (0,0)
{
\node(a1) {$2$}; & \node (a12) {${\scriptstyle\frac{5}{2}}$}; \\
\node(a2) {$1$}; & \node {${\scriptstyle\frac{3}{2}}$}; \\
\node(a3) {$1$};  & \node {${\scriptstyle\frac{3}{2}}$}; \\
\node(a4) {${\scriptstyle\frac{5}{2}}$}; & \node {$3$}; \\
\node(a5) {$1$}; & \node (a52) {${\scriptstyle\frac{3}{2}}$}; \\
};

\node[matrix,  nodes={fill=white,draw,minimum size=5.5mm},row sep=-\pgflinewidth,column sep=-\pgflinewidth] (Leva) at (1.2,0.2)
{
\node(b1) {${\scriptstyle\frac{3}{2}}$}; & \node {$1$}; \\
\node(b2) {$3$}; & \node {${\scriptstyle\frac{5}{2}}$}; \\
\node(b3) {${\scriptstyle\frac{3}{2}}$}; & \node {$1$}; \\
\node(b4) {${\scriptstyle\frac{3}{2}}$}; & \node {$1$}; \\
\node(b5) {${\scriptstyle\frac{5}{2}}$}; & \node {$2$}; \\
};

\node[matrix,  nodes={fill=white,draw,minimum size=5.5mm},row sep=-\pgflinewidth,column sep=-\pgflinewidth] (Leva) at (2.4,0.4)
{
\node(c1) {${\scriptstyle\frac{3}{2}}$}; & \node {$0$}; \\
\node(c2) {${\scriptstyle\frac{3}{2}}$}; & \node {$0$}; \\
\node(c3) {$3$}; & \node {${\scriptstyle\frac{3}{2}}$}; \\
\node(c4) {${\scriptstyle\frac{3}{2}}$}; & \node {$0$}; \\
\node(c5) {${\scriptstyle\frac{3}{2}}$}; & \node {$0$}; \\
};
\node[matrix,  nodes={fill=white,draw,minimum size=5.5mm},row sep=-\pgflinewidth,column sep=-\pgflinewidth] (Leva) at (3.6,0.6)
{
\node(d1) {${\scriptstyle\frac{3}{2}}$}; & \node(d12) {$0$}; \\
\node(d2) {${\scriptstyle\frac{3}{2}}$}; & \node {$0$}; \\
\node(d3) {${\scriptstyle\frac{3}{2}}$}; & \node {$0$}; \\
\node(d4) {$3$}; & \node {${\scriptstyle\frac{3}{2}}$}; \\
\node(d5) {${\scriptstyle\frac{3}{2}}$}; & \node(d52) {$0$}; \\
};
\draw[dashed,opacity=0.9] (a12.north east) -- (d12.north east);
\draw[dashed,opacity=0.9] (a1.north west) -- (d1.north west);
\draw[dashed,opacity=0.9] (a52.south east) -- (d52.south east);
\begin{pgfonlayer}{background}
\draw[dashed, opacity=0.9] (a5.south west) -- (d5.south west);
\end{pgfonlayer}
\end{tikzpicture}
\end{center}
The grading matrix may then be filled up arbitrarily. Of course this gives a different grading but the leading monomials stay the same.
\end{remark}

\subsection{The case $(\SL_{2p+1},W_2)$}
 
 \begin{proposition}\label{prop:W2}
The ring of invariants $\CC[W_2]^{\SL_{2p+1}}$ is a complete intersection. Its homological dimension is two for $p=2$ and $2p-3$ for $p\geq 3$. It is minimally generated by the 
$$
\mathfrak{i}_{abc}, ab \in \{12,13,23\}, c \in \{1,2\}; 
\quad
\mathfrak{j}_{abc}, a,b \geq 1, c \in \{1,2,3\};
\quad
\mathfrak{k}_{ab4}, a,b\geq 3.
$$ 
from Proposition~\ref{prop:W1} and in addition 
\begin{center}
\gS{\mathfrak{h}_{a}=}
\begin{tikzpicture}[baseline=(A),outer sep=0pt,inner sep=0pt]
\node (A) at (0,0) {};
\staru{0}{0}{a};
\onee{0}{1};
\end{tikzpicture}
\kern-0.1em
\gS{a \in \{1,2,3\};}
\
\gS{\mathfrak{l}_{ab}=}
\begin{tikzpicture}[baseline=(A),outer sep=0pt,inner sep=0pt]
\clip (-0.9,-1.3) rectangle (0.9,1.1);
\node (A) at (0,0) {};

\twostr{-0.2}{0.2}{1};
\draw[line width=0.7pt, draw=black,dotted]  (-0.45,0.4) -- (-0.25,0.4) ;
\twostr{-1}{0.4}{};
\node[font=\tiny,align=center] at (-0.8,0.6) {$\iiE{1}$};
\draw[decorate, decoration={brace}, line width=0.5pt, draw=black]  (-0.8,0.75) -- (-0.2,0.75) node[font=\tiny,midway, above=3pt]  {$a$};

\twostr{0.2}{0.2}{2};
\draw[line width=0.7pt, draw=black,dotted]  (0.45,0.4) -- (0.25,0.4) ;
\twostr{1}{0.4}{};
\node[font=\tiny,align=center] at (0.8,0.6) {$\iiE{2}$};
\draw[decorate, decoration={brace,mirror}, line width=0.5pt, draw=black]  (0.8,0.75) -- (0.2,0.75) node[font=\tiny,midway, above=3pt]  {$b$};

\onee{0}{1};

\end{tikzpicture}
\kern-0.1em
\gS{, a,b\geq 0; }
\\
\gS{\mathfrak{m}_{abc}=}
\begin{tikzpicture}[baseline=(A),outer sep=0pt,inner sep=0pt]
\clip (-0.9,-1.3) rectangle (1,1.1);
\node (A) at (0,0) {};

\twostr{-0.2}{0.2}{1};
\draw[line width=0.7pt, draw=black,dotted]  (-0.45,0.4) -- (-0.25,0.4) ;
\twostr{-1}{0.4}{};
\node[font=\tiny,align=center] at (-0.8,0.6) {$\iiE{1}$};
\draw[decorate, decoration={brace}, line width=0.5pt, draw=black]  (-0.8,0.75) -- (-0.2,0.75) node[font=\tiny,midway, above=3pt]  {$a$};

\twostr{0.2}{0.2}{2};
\draw[line width=0.7pt, draw=black,dotted]  (0.45,0.4) -- (0.25,0.4) ;
\twostr{1}{0.4}{};
\node[font=\tiny,align=center] at (0.8,0.6) {$\iiE{2}$};
\draw[decorate, decoration={brace,mirror}, line width=0.5pt, draw=black]  (0.8,0.75) -- (0.2,0.75) node[font=\tiny,midway, above=3pt]  {$b-2$};

\onee{0}{1};

\begin{scope}[rotate around={180:(-0.7,0)}]
\staru{-0.7}{0}{d};
\end{scope}
\twoeARB{-0.7}{0}{0}{0}{2}{below=2pt};
\begin{scope}[rotate around={180:(0.7,0)}]
\staru{0.7}{0}{e};
\end{scope}
\twoeARB{0.7}{0}{0}{0}{2}{below=2pt};
\end{tikzpicture}
\gS{, a,b\geq 2,  de \in \{23,13,12\}, c=\{1,2,3\}\setminus\{d,e\}. }
\end{center}

\end{proposition}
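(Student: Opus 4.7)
The plan is to mirror the strategy used for Proposition~\ref{prop:W1}, adapted to the presence of the extra copy of $V$ coming from $V_2 = V + \BigWedge^2 V + \BigWedge^{2p}V$. First I would justify the list of generators: most of them (the $\mathfrak{i}$, $\mathfrak{j}$, $\mathfrak{k}$ families) are inherited from Proposition~\ref{prop:W1}, and by Shmelkin~\cite{shmel} these together with generators involving the new $V$-letter (depicted by $\iE{1}$-edges) exhaust a minimal generating set. The explicit graph forms of $\mathfrak{h}_a, \mathfrak{l}_{ab}, \mathfrak{m}_{abc}$ are suggested by the covariants $c^\Diamond_*, c^\circ_*$ from Lemma~\ref{le:covL+L2+Ln-1}: one attaches a single $\iE{1}$-edge to the star of color $a$ (yielding $\mathfrak{h}_a$), or to a chain of two-edges bounded by stars (yielding $\mathfrak{l}$ and $\mathfrak{m}$).

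Second, I would construct the contracted algebra $A_2 = (\CC[V_1]^U \otimes \CC[V_2]^{U^o})^T$. Since $\Cov(\SL_n, V_2)$ is polynomial and $\Cov(\SL_n, V_1)$ has a single relation (Lemmas~\ref{le:covL2+2Ln-1},~\ref{le:covL+L2+Ln-1}), $A_2$ admits a description parallel to Lemma~\ref{le:W1}: as a tensor product of several $T$-invariant rings of bilinear type. Reading off the multigraded Hilbert series, the denominator factors match the generators above and the numerator factors indicate the expected multidegrees of syzygies of $\CC[W_2]^{\SL_n}$ — these are contractions of the true syzygies and tell us which monomial combinations of $\mathfrak{j},\mathfrak{k},\mathfrak{l},\mathfrak{m}$ and $\mathfrak{h}$ should appear as leading terms.

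Third, I would realize each predicted syzygy by constructing a graph decomposable in two different ways via Plücker relations. The model is the graph built in the proof of Proposition~\ref{prop:W1} (a central vertex of two-edges linked to four stars), which here admits a natural variant: one of the four stars is replaced by the dummy produced from a $\iE{1}$-edge, or the central vertex is augmented with an $\iE{1}$-edge. Pulling the looping two-edges of color~$1$ toward the central vertex in one direction, and applying the Plücker relation to the outer edges in the other, yields two expressions for the same graph whose difference is the desired syzygy $\mathfrak{f}_k$. I expect one syzygy for each denominator factor of the reduced Hilbert series, with a new series of syzygies involving $\mathfrak{h}_a \mathfrak{m}_{ab\cdot}$ and $\mathfrak{l}_{ab}\mathfrak{k}_{cd\cdot}$ beyond those of Proposition~\ref{prop:W1}.

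Finally, I would apply the Crosshair-sieve algorithm (Algorithm~\ref{algCS}) to prove these syzygies form a Gröbner basis. The sieve tensors are now larger because of the extra variables $\mathfrak{h}_a, \mathfrak{l}, \mathfrak{m}$, but the weighted homogeneity forces the degrees of the monomials of a single $\mathfrak{f}_k$ onto a counterdiagonal plane exactly as before. The first row of the grading matrix is set so that the pure $\mathfrak{j}\mathfrak{k}$ and $\mathfrak{l}\mathfrak{k}$, $\mathfrak{m}\mathfrak{j}$ products dominate the $\mathfrak{i}\mathfrak{j}^2$ or $\mathfrak{h}\mathfrak{j}\mathfrak{k}$ terms; then I would mimic the inductive choice of leading monomials made for $W_1$, handling the small cases $p = 2$ (where homological dimension $2$ gives ci.\ directly via~\cite[Le.~5.1]{shmel}) and $p = 3$ as base cases. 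The main obstacle will be controlling the inductive step in the presence of the extra parallel family, i.e.\ arranging leading monomials of the $\mathfrak{h}\mathfrak{m}$-type syzygies so that their crosshairs do not collide with those of the $\mathfrak{j}\mathfrak{k}$-type at any sieve level; once this combinatorial arrangement is found, termination of Algorithm~\ref{algCS} and equality with the Hilbert series of $A_2$ give both the ci.\ property and the claimed homological dimension $2p-3$.
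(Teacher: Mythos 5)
Your proposal follows essentially the same route as the paper: generators via Shmelkin, the contracted algebra $A_2$ and its multigraded Hilbert series to predict syzygy multidegrees, graphs decomposable in two ways via Pl\"ucker relations to realize them, and the Crosshair-sieve algorithm to certify a Gr\"obner basis, with the small cases $p=2,3$ treated separately. The only substantive content you defer --- and which constitutes the paper's actual work --- is the explicit combinatorial arrangement: the new syzygies $\mathfrak{g}_k$ contain quadratic monomials of type $\mathfrak{j}\mathfrak{m}$ and $\mathfrak{k}\mathfrak{l}$ (not $\mathfrak{h}\mathfrak{m}$; the $\mathfrak{h}$'s only enter cubic terms $\mathfrak{h}\mathfrak{j}\mathfrak{j}$), the sieve accordingly splits into a $\mathfrak{j}\mathfrak{m}$-block and a $\mathfrak{k}\mathfrak{l}$-block, and the leading monomials are chosen inductively (e.g.\ $\lm(\mathfrak{g}_{2p})=\mathfrak{j}_{(p-1)22}\mathfrak{m}_{(p-3)42}$, $\lm(\mathfrak{g}_{2p-1})=\mathfrak{j}_{p33}\mathfrak{m}_{(p-3)43}$) so that each successive sieve filters out at least one of them.
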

 
 \begin{lemma}\label{le:W2}
 Denote by $x$ the images of the $c$ in $\CC[V_1]^U$ and by $y$ those in $\CC[V_2]^{U^o}$.  The algebra  $A_2=(\CC[V_1]^U\otimes\CC[V_2]^{U^o})^T$ is minimally generated by $x^*_2, y^\circ_{p+1}, y^\Diamond_1$ and the entries of the matrices
 \setlength{\arraycolsep}{0.5mm}
   $$
 d:=
 \begin{bmatrix}
 y^\circ_p \\
 y^{(1)}_{p+1} 
 \end{bmatrix}
 \cdot
 \begin{bmatrix}
 x_1 & x^*_3
 \end{bmatrix}
 ,
 \quad
 e:=
 \begin{bmatrix}
 y_p \\
 y^{(1)}_1
 \end{bmatrix}
 \cdot
 \begin{bmatrix}
 x^{(1)}_{2} & x^{(2)}_{2} 
 \end{bmatrix}
 ,
 $$
 $$
 f:=
  \begin{bmatrix}
 y^\circ_1 \\
 y^{(1)}_2
 \end{bmatrix}
\cdot
 \begin{bmatrix}
 x^{(1)}_1 & x^{(2)}_1 & x_p 
 \end{bmatrix}
 ,
 \quad
 g:=\begin{bmatrix}
 y_1 \\
 y^\Diamond_2
 \end{bmatrix}
 \cdot
 \begin{bmatrix}
 x^{(1)}_{p+1} & x^{(2)}_{p+1} & x_1^*
 \end{bmatrix}
 ,
 $$
  $$
h^{(k)}:=\begin{bmatrix}
x_{2+k} \\
 x^*_{4+k}
 \end{bmatrix}
 \cdot
 \begin{bmatrix}
 y^\circ_{p-k+1} 
 &
 y^{(1)}_{p-k}
 \end{bmatrix}
 ,
 \quad
 i^{(k)}:=\begin{bmatrix}
x^{(1)}_{p-k} \\
 x^{(2)}_{p-k} 
 \end{bmatrix}
 \cdot
 \begin{bmatrix}
 y_{2+k} & y^\Diamond_{3+k} 
 \end{bmatrix}
 ,
 \quad
  k=0,\ldots,p-3.
 $$
The ideal of syzygies is generated by all $2\times 2$-minors of the matrices $d$, $e$, $f$, $g$, $h^{(k)}$, $i^{(k)}$ and
$$
f_{a3}g_{b3}-p f_{a1} g_{b2} + p f_{a2} g_{b1}, \quad a,b \in \{1,2\}.
$$
 \end{lemma}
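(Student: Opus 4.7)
The plan is to proceed in direct analogy with the proof of Lemma~\ref{le:W1}. Via the isomorphism from~\cite{shmel3}, we have $\CC[V_1]^U \cong \Cov(\SL_n, V_1)$ and $\CC[V_2]^{U^o} \cong \Cov(\SL_n, V_2)$, with the $T$-action corresponding to the natural multigrading on covariants. So I first read off the $T$-weights of the generators listed in Lemma~\ref{le:covL2+2Ln-1} and Lemma~\ref{le:covL+L2+Ln-1} from~\cite[Table 1]{brionRed}. (Going through $U^{o}$ rather than $U$ only flips the sign convention on weights, which is what makes the pairings nontrivial.)

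A direct inspection then identifies the $T$-invariant monomials of low degree in the tensor product: the elements $x^*_2$, $y^\circ_{p+1}$, $y^\Diamond_1$ already have zero weight, while each of the remaining generators is a product of one generator from $\CC[V_1]^U$ and one from $\CC[V_2]^{U^{o}}$ whose weights cancel. The fact that the admissible pairings organize into rank-one matrices $d$, $e$, $f$, $g$, $h^{(k)}$, $i^{(k)}$ is a reflection of the weight-matching being separable: the rows index weight vectors from $\CC[V_1]^U$ of one fixed weight, the columns index the compensating generators from $\CC[V_2]^{U^{o}}$ of the opposite weight. That this is in fact a minimal generating set follows from standard arguments on $T$-invariants of polynomial rings generated by weight vectors together with the one known relation on $\CC[V_1]^U$ from Lemma~\ref{le:covL2+2Ln-1}, exactly as in~\cite[Proof of Thm. 1.8]{ltpticr}.

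For the syzygies, the $2\times 2$-minor relations for each of $d,e,f,g,h^{(k)},i^{(k)}$ are automatic from the rank-one structure of each matrix. The trinomial relation is obtained from the unique syzygy $c_p c^*_1 - p\, c^{(1)}_1 c^{(2)}_{p+1} + p\, c^{(2)}_1 c^{(1)}_{p+1}$ of $\Cov(\SL_n, V_1)$ found in Lemma~\ref{le:covL2+2Ln-1}: multiplying each summand by an appropriate weight vector from $\CC[V_2]^{U^{o}}$ (one of $y_1$, $y_3^*$, $y^\circ_1$, $y^\Diamond_2$) makes all three summands into $T$-invariants expressible through entries of $f$ and $g$, producing the claimed relation for any $a,b \in \{1,2\}$. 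Any other choice of multiplier gives the same relation up to the minor relations of $f$ and $g$.

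The step I expect to be the main obstacle is verifying completeness, i.e.\ that the listed relations generate the whole ideal of syzygies rather than just a subideal. My plan here is to compare Hilbert series $T$-weight by $T$-weight: the graded character of $A_2$ can be read off from the covariant presentations in Lemmata~\ref{le:covL2+2Ln-1} and~\ref{le:covL+L2+Ln-1} as in the proof of Proposition~\ref{prop:W1}, while the quotient of a polynomial ring on the listed generators by the minor relations together with the trinomial is a toric-by-trinomial algebra whose Hilbert series is straightforward to compute from the combinatorics of the matrices $d,e,f,g,h^{(k)},i^{(k)}$. Matching these two Hilbert series in each multidegree will close the argument.
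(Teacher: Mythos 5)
Your proposal matches the paper's argument: the paper proves this lemma by the one-line remark that one proceeds exactly as for Lemma~\ref{le:W1}, i.e.\ by reading off the $T$-weights of the covariant generators from~\cite[Table 1]{brionRed} and deducing the generators and syzygies of the $T$-invariant subalgebra as in~\cite[Proof of Thm. 1.8]{ltpticr}, which is precisely your weight-matching, rank-one-matrix, minors-plus-trinomial analysis. Your closing Hilbert-series check of completeness is just one concrete implementation of the step the paper delegates to the cited toric argument, so the two proofs are essentially the same.
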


\begin{proof}
We proceed exactly as in Lemma~\ref{le:W1}.
\end{proof}

 \begin{proof}[Proof of Prop.~\ref{prop:W2}]
 The list of generators can be extracted from~\cite[Le. 2.5]{shmel}, it is minimal due to~\cite[Proof of Thm. 0.2]{shmel}.
 We come to the syzygies.
 Let us first have a look at the Hilbert series of $A_2$ with respect to the $\ZZ^6$-grading of $W_2$, where variables $t_i$, $s_i$ and $r$ correspond to copies of $\BigWedge^2V$, $\BigWedge^{n-1}V$ and $V$ respectively. Lemma~\ref{le:W2} provides us with generators and syzygies. We have 
\begin{align*}
 A_2 = 
\CC[x^*_2, y^\circ_{p+1},y^\Diamond_1]  &  \otimes  \CC[y^{(1)}_{p+1},y^\circ_p,y^*_1,x_1,x^*_3]^T \otimes  \CC[y^{(1)}_{1} ,y_p,x^{(1)}_{2},x^{(2)}_{2}]^T   \\
 & \otimes  \CC[y^{(1)}_{2}, y^\circ_1,x^{(1)}_1,x^{(2)}_1,x_p,y_1,y^\Diamond_2,x^{(1)}_{p+1},x^{(2)}_{p+1},x^*_1]^T
   \\
 & \otimes  \bigotimes_{k=0}^{p-3}\CC[x_{2+k},x^*_{4+k},y^\circ_{p-k+1},y^{(1)}_{p-k}]^T \\  
  & \otimes \bigotimes_{k=0}^{p-3}\CC[y_{2+k},y^\Diamond_{3+k},x^{(1)}_{p-k},x^{(2)}_{p-k}]^T.
    \end{align*}
The first four algebras have the Hilbert series
$${\scriptstyle
\frac{1}{(1-t_1s_1s_2)(1-t_2^pr)(1-s_3r)},
\quad
\frac{(1-t_1^3t_2^{2p-1}s_1s_2s_3r)}{(1-t_1t_2^{p-1}r)(1-t_1t_2^{p}s_3)(1-t_1^2t_2^{p-1}s_1s_2r)(1-t_1^2t_2^ps_1s_2s_3)},
}
$$
$$
{\scriptstyle
\frac{(1-t_1^2t_2^ps_1s_2s_3)}{(1-t_1t_2^ps_1)(1-t_1t_2^ps_2)(1-t_1s_1s_3)(1-t_1s_2s_3)},
}$$
$$
{\scriptstyle
\frac{(1-t_1^pt_2^2s_1s_2s_3)(1-t_1^pt_2s_1s_2r)}{(1-s_1r)(1-s_2r)(1-t_1^pr)(1-t_2s_1s_3)(1-t_2s_2s_3)(1-t_1^pt_2s_3)(1-t_1^pt_2s_1)(1-t_1^pt_2s_2)(1-t_2s_1s_2),}
}
$$
whereas the ones of the $k$-th factors of the fifth and sixth algebra are
$$
{\scriptstyle
\frac{(1-t_1^{2k+5}t_2^{2p-2k-3}s_1s_2s_3r)}{(1-t_1^{2+k}t_2^{p-k-1}s_3)(1-t_1^{3+k}t_2^{p-k-1}s_1s_2s_3)(1-t_1^{2+k}t_2^{p-k-2}r)(1-t_1^{3+k}t_2^{p-k-2}s_1s_2r)},
}
$$
$$
{\scriptstyle
\frac{(1-t_1^{2p-2k-2}t_2^{2k+4}s_1s_2s_3r)}{(1-t_1^{p-k-1}t_2^{2+k}s_1)(1-t_1^{p-k-1}t_2^{2+k}s_2)(1-t_1^{p-k-1}t_2^{2+k}s_1s_3r)(1-t_1^{p-k-1}t_2^{2+k}s_2s_3r)}.
}
$$
As in the proof of Proposition~\ref{prop:W1}, we distinguish between $p=2,3,\geq 4$. For $p=2$, we have homological dimension two and according to the Hilbert series and the given $18$ generators, there are two syzygies corresponding to the factors $(1-t_1^2t_2^2s_1s_2s_3)$ and $(1-t_1^3t_2^3s_1s_2s_3r)$. The first one already occured in $\CC[W_1]^{\SL_5}$, it is 
$$
\mathfrak{i}_{121}\mathfrak{j}_{123}-\mathfrak{i}_{131}\mathfrak{j}_{122}+\mathfrak{i}_{231}\mathfrak{j}_{121}+\mathfrak{i}_{122}\mathfrak{j}_{213}-\mathfrak{i}_{132}\mathfrak{j}_{212}+\mathfrak{i}_{232}\mathfrak{j}_{211}.
$$
For an explicit form of the second one, consider the graphs
\begin{center}
\begin{tikzpicture}[baseline=(A),outer sep=0pt,inner sep=0pt]

\node (A) at (0,0) {};

\twoeARB{0}{0.4}{0}{0}{1}{right=1pt};
\foureu{-0.7}{0}{1}
\foureu{0}{0.4}{2}
\foureu{0.7}{0}{3}
\twoeABS{0}{-0.7}{1}
\twoeABS{0}{0.7}{2}
\begin{scope}[rotate around={45:(0,0)}]
\onee{0}{1}
\end{scope}
\twoeARB{-0.35}{-0.7}{0}{0}{1}{left=3pt};
\begin{scope}[shift={(-0.35,-0.7)}]
\begin{scope}[rotate around={90:(0,0)}]
\twoe{0}{2}
\twoed{0}{2}
\end{scope}
\end{scope}
\end{tikzpicture}
\gS{=}
\gS{-2}
\begin{tikzpicture}[baseline=(A),outer sep=0pt,inner sep=0pt]

\node (A) at (0,0) {};

\twoeARB{0}{0.4}{0}{0}{1}{right=1pt};
\foureu{-0.7}{0}{1}
\foureu{0}{0.4}{2}
\foureu{0.7}{0}{3}
\twoeABS{0}{-0.7}{1}
\twoeABS{0}{0.7}{2}
\begin{scope}[rotate around={45:(0,0)}]
\onee{0}{1}
\end{scope}
\twoeARB{-0.35}{-0.7}{0}{0}{2}{left=3pt};
\begin{scope}[shift={(-0.35,-0.7)}]
\begin{scope}[rotate around={90:(0,0)}]
\twoe{0}{1}
\twoed{0}{2}
\end{scope}
\end{scope}
\end{tikzpicture}
\gS{.}
\end{center}
Applying the Pl\"ucker relation to the diagonal edges in each of both graphs gives a relation, which we can write - ignoring coefficients - as
$$
 \sum_{(i,j,k)\vdash(123) } 
\mathfrak{h}_{i}\mathfrak{j}_{12j} \mathfrak{j}_{21k}
+
 \sum_{(ij,k)\vdash(123) } \left(
\mathfrak{i}_{ij1}(\mathfrak{j}_{12k} \mathfrak{l}_{11} + \mathfrak{j}_{21k} \mathfrak{l}_{02})+\mathfrak{i}_{ij2}(\mathfrak{j}_{12k} \mathfrak{l}_{20} + \mathfrak{j}_{21k} \mathfrak{l}_{11})
\right).
$$

Now consider the case $p=3$. Here as with $W_1$, we  encounter all types of invariants but $\mathfrak{k}_{ab4}$. We need three relations corresponding to $(t_1^{k}t_2^{8-k}s_1s_2s_3r)$ for $k=3,4,5$. Consider the graphs
\begin{center}
\begin{tikzpicture}[baseline=(A),outer sep=0pt,inner sep=0pt]

\node (A) at (0,0) {};

\twoeARB{-1.05}{0}{0}{0}{2}{above=1pt};
\twoeARB{1.05}{0}{0}{0}{2}{above=1pt};
\twoeARB{-0.7}{0.7}{0}{0}{2}{left=2pt};
\twoeARB{0}{0.7}{0}{0}{2}{left=1pt};

\sixeu{-1.05}{0}{1}
\sixeu{-0.7}{0.7}{2}
\sixeu{0}{0.7}{3}

\begin{scope}[rotate around={135:(0,0)}]
\onee{0}{1}
\end{scope}

\twoed{0}{2}

\begin{scope}[shift={(1.05,0)}]
\begin{scope}[rotate around={-15:(0,0)}]
\twoed{0}{1}
\end{scope}
\begin{scope}[rotate around={90:(0,0)}]
\twoed{0}{1}
\end{scope}
\begin{scope}[rotate around={195:(0,0)}]
\twoed{0}{1}
\end{scope}
\end{scope}
\end{tikzpicture}
\gS{,}
\begin{tikzpicture}[baseline=(A),outer sep=0pt,inner sep=0pt]

\node (A) at (0,0) {};

\twoeARB{-1.05}{0}{0}{0}{1}{above=1pt};
\twoeARB{1.05}{0}{0}{0}{2}{above=1pt};
\twoeARB{-0.7}{0.7}{0}{0}{2}{left=2pt};
\twoeARB{0}{0.7}{0}{0}{2}{left=1pt};

\sixeu{-1.05}{0}{1}
\sixeu{-0.7}{0.7}{2}
\sixeu{0}{0.7}{3}

\begin{scope}[rotate around={135:(0,0)}]
\onee{0}{1}
\end{scope}

\twoed{0}{2}

\begin{scope}[shift={(1.05,0)}]
\begin{scope}[rotate around={-15:(0,0)}]
\twoed{0}{1}
\end{scope}
\begin{scope}[rotate around={90:(0,0)}]
\twoed{0}{1}
\end{scope}
\begin{scope}[rotate around={195:(0,0)}]
\twoed{0}{1}
\end{scope}
\end{scope}
\end{tikzpicture}
\gS{,}
\begin{tikzpicture}[baseline=(A),outer sep=0pt,inner sep=0pt]

\node (A) at (0,0) {};

\twoeARB{-1.05}{0}{0}{0}{1}{above=1pt};
\twoeARB{1.05}{0}{0}{0}{2}{above=1pt};
\twoeARB{-0.7}{0.7}{0}{0}{1}{left=2pt};
\twoeARB{0}{0.7}{0}{0}{2}{left=1pt};

\sixeu{-1.05}{0}{1}
\sixeu{-0.7}{0.7}{2}
\sixeu{0}{0.7}{3}

\begin{scope}[rotate around={135:(0,0)}]
\onee{0}{1}
\end{scope}

\twoed{0}{2}

\begin{scope}[shift={(1.05,0)}]
\begin{scope}[rotate around={-15:(0,0)}]
\twoed{0}{1}
\end{scope}
\begin{scope}[rotate around={90:(0,0)}]
\twoed{0}{1}
\end{scope}
\begin{scope}[rotate around={195:(0,0)}]
\twoed{0}{1}
\end{scope}
\end{scope}
\end{tikzpicture}
\gS{.}
\end{center}
Applying Pl\"ucker relations to them results - for $k=3,4,5$ - in
$$
\mathfrak{g}_k=
 \sum_{
  (i,j,l)\vdash(123)} 
  \sum
\mathfrak{h}_{i}\mathfrak{j}_{abj} \mathfrak{j}_{cdl}
+
\sum_{(ij,l)\vdash(123)}
\left( \sum 
\mathfrak{j}_{abl} \mathfrak{m}_{cdl}
+ \sum
\mathfrak{i}_{ije}\mathfrak{j}_{abl} \mathfrak{l}_{cd}
\right),
$$
where in the second and fourth sum, we sum over $a+c=k, b+d=2p+2-k$ and in the last one over $e=1,2$ and $ a+c=k-\delta_{e1}, b+d=2p+2-k-\delta_{e2}$. In the terms $\mathfrak{j}_{abl} \mathfrak{m}_{cdl}$, we observe that $a=b=2$ must hold, so $\mathfrak{g}_3, \mathfrak{g}_4, \mathfrak{g}_5$ contain terms  $\mathfrak{j}_{22l} \mathfrak{m}_{13l}, \mathfrak{j}_{22l} \mathfrak{m}_{22l}, \mathfrak{j}_{22l} \mathfrak{m}_{31l}$ respectively. We apply the Crosshair-sieve-algorithm~\ref{algCS}. First we make the $\mathfrak{m}$ the variables of highest degree and second, we filter out $\mathfrak{j}_{221} \mathfrak{m}_{131}, \mathfrak{j}_{222} \mathfrak{m}_{222}, \mathfrak{j}_{223} \mathfrak{m}_{313}$ as leading monomials of $\mathfrak{g}_3, \mathfrak{g}_4, \mathfrak{g}_5$, which shows that they form a Gr\"obner basis and that $\CC[W_2]^{\SL_7}$ is a complete intersection.

Now for $p\geq 4$, the situation is similar, but the relations $\mathfrak{g}_k$ now have additional terms:
$$
\mathfrak{g}_k=
\sum \mathfrak{k}_{ab4}\mathfrak{l}_{cd}
+
 \sum_{
  (i,j,l)\vdash(123)} 
\mathfrak{h}_{i}\mathfrak{j}_{abj} \mathfrak{j}_{cdl}
+
\sum_{(ij,l)\vdash(123)}
\left( \sum 
\mathfrak{j}_{abl} \mathfrak{m}_{cdl}
+ \sum
\mathfrak{i}_{ije}\mathfrak{j}_{abl} \mathfrak{l}_{cd}
\right).
$$
We apply the Crosshair-sieve-algorithm~\ref{algCS} - in a preliminary step $\mathcal{M}_1$ making variables $\mathfrak{m}$ and $\mathfrak{k}$ the ones of highest degree -  with the following sieves: 
$\mathcal{S}_\nu$ now comprises two parts -  a $p\times(p-2)\times 3$-matrix $\mathcal{S}_{\nu,1}$ and a $(p+1)\times(p-3)$-matrix $\mathcal{S}_{\nu,2}$ , where in the $(r,s,t)$-th entry of $\mathcal{S}_{\nu,1}$ stands the degree of  $\mathfrak{j}_{r(p+1-r)t}\mathfrak{m}_{(s+1)(p-s)t}$ and in the $(r,s)$-th entry of $\mathcal{S}_{\nu,2}$ the degree of $\mathfrak{k}_{(s+2)(p-s)4}\mathfrak{l}_{(r-1)(p+1-r)}$. So for $p=4$, the sieves are of the form
\begin{center}
\begin{tikzpicture}
\node[matrix,  nodes={fill=white,draw,minimum size=2mm},row sep=-\pgflinewidth,column sep=-\pgflinewidth] (Leva) at (0,0)
{
\node(a1) {}; & \node (a12) {}; \\
\node(a2) {}; & \node {}; \\
\node(a3) {};  & \node {}; \\
\node(a5) {}; & \node (a52) {}; \\
};

\node[matrix,  nodes={fill=white,draw,minimum size=2mm},row sep=-\pgflinewidth,column sep=-\pgflinewidth] (Leva) at (1,0.2)
{
\node(b1) {}; & \node {}; \\
\node(b2) {}; & \node {}; \\
\node(b3) {}; & \node {}; \\
\node(b5) {}; & \node {}; \\
};

\node[matrix,  nodes={fill=white,draw,minimum size=2mm},row sep=-\pgflinewidth,column sep=-\pgflinewidth] (Leva) at (2,0.4)
{
\node(c1) {}; & \node(c12) {}; \\
\node(c2) {}; & \node {}; \\
\node(c3) {}; & \node {}; \\
\node(c5) {}; & \node(c52) {}; \\
};
\node[matrix,  nodes={fill=white,draw,minimum size=2mm},row sep=-\pgflinewidth,column sep=-\pgflinewidth] (Leva) at (3,0.2)
{
\node(d1) {}; \\
\node(d2) {};  \\
\node(d3) {};  \\
\node(d4) {};  \\
\node(d5) {};  \\
};
\draw[dashed,opacity=0.9] (a12.north east) -- (c12.north east);
\draw[dashed,opacity=0.9] (a1.north west) -- (c1.north west);
\draw[dashed,opacity=0.9] (a52.south east) -- (c52.south east);
\begin{pgfonlayer}{background}
\draw[dashed, opacity=0.9] (a5.south west) -- (c5.south west);
\end{pgfonlayer}
\end{tikzpicture}
\end{center}
Again, monomials occuring in one $\mathfrak{g}_k$ lie on the counterdiagonals. For $\mathfrak{g}_3,\mathfrak{g}_4,\mathfrak{g}_5$, we choose the same leading monomials as in the case $p=3$, while for $\mathfrak{g}_6,\mathfrak{g}_7$, we take
$$
\lm(\mathfrak{g}_6)=\mathfrak{l}_{31}\mathfrak{k}_{334},
\quad
\lm(\mathfrak{g}_7)=\mathfrak{j}_{411}\mathfrak{m}_{321}.
$$
With this choice $\mathcal{S}_2$ filters out all leading monomials but the ones of $\mathfrak{g}_4$ and $\mathfrak{g}_6$, which is done by $\mathcal{S}_3$. If for general $p$, we choose the same leading monomials as for $p-1$ and for the two additional relations we take
$$
\lm(\mathfrak{g}_{2p})=\mathfrak{j}_{(p-1)22}\mathfrak{m}_{(p-3)42},
\quad
\lm(\mathfrak{g}_{2p-1})=\mathfrak{j}_{p33}\mathfrak{m}_{(p-3)43},
$$
by the same argument as in the proof of Proposition~\ref{prop:W1}, each  $\mathcal{S}^p_\nu$ filters out at least the leading monomial of the remaining $\mathfrak{g}_k$ with highest index, so the Crosshair-sieve-algorithm terminates, the $\mathfrak{g}_k$ are a Gr\"obner basis and $\CC[W_2]^{\SL_n}$ is a complete intersection.

\end{proof}

\subsection{The case $(\SL_{2p+1},W_3)$}
Let from now on all two-edges be of color one.

 \begin{proposition}\label{prop:W3}
The ring of invariants $\CC[W_3]^{\SL_{2p+1}}$ is a complete intersection. Its homological dimension is $2p-2$. It is minimally generated by $\mathfrak{i}_{121}$ from Proposition~\ref{prop:W1} and
\begin{center}
\gS{\mathfrak{n}_{ab}=}
\begin{tikzpicture}[baseline=(A),outer sep=0pt,inner sep=0pt]
\node (A) at (0,0) {};
\twoeS{0}{0.7}{}{0}
\staru{0}{0}{a};
\staru{0.7}{0}{b};
\end{tikzpicture}
\kern-0.3em
\gS{,ab \in \{11,12,22\},}
\gS{\mathfrak{o}=}
\kern-0.5em
\begin{tikzpicture}[baseline=(A),outer sep=0pt,inner sep=0pt]
\node (A) at (0,0) {};

\twoeS{0}{0.7}{}{0.35};
\twoeSd{0}{0.7}{}{0.35};
\draw[line width=0.7pt, draw=black,dotted]  (0.35,0.15) --(0.35,-0.15) ;

\draw[black, fill=black] (0,0) circle [radius=1.5pt];
\draw[black, fill=black] (0.7,0) circle [radius=1.5pt];
\end{tikzpicture}
\kern-0.9em
\gS{,}
\kern-0.3em
\gS{\mathfrak{p}_{a}=}
\kern-0.3em
\begin{tikzpicture}[baseline=(A),outer sep=0pt,inner sep=0pt]
\node (A) at (0,0) {};
\twoeS{0}{0.7}{}{0}
\staru{0}{0}{a};

\begin{scope}[shift={(0.7,0)}]
\twostr{-0.4}{0.2}{};
\draw[line width=0.7pt, draw=black,dotted]  (-0.1,0.4) -- (0.1,0.4) ;
\twostr{0.4}{0.2}{};
\end{scope}

\end{tikzpicture}
\kern-1.3em
\gS{,a \in \{1,2\},}
\\
\gS{\mathfrak{q}_1=}
\kern-1.1em
\begin{tikzpicture}[baseline=(A),outer sep=0pt,inner sep=0pt]
\node (A) at (0,0) {};
\twoeS{0}{0.7}{}{0}
\twostr{-0.4}{0.2}{};
\draw[line width=0.7pt, draw=black,dotted]  (-0.1,0.4) -- (0.1,0.4) ;
\twostr{0.4}{0.2}{};

\begin{scope}[shift={(0.7,0)}]
\twostr{-0.4}{0.2}{};
\draw[line width=0.7pt, draw=black,dotted]  (-0.1,0.4) -- (0.1,0.4) ;
\twostr{0.4}{0.2}{};
\end{scope}

\end{tikzpicture}
\kern-1.3em
\gS{,}
\gS{\mathfrak{q}_a=}
\kern-0.5em
\begin{tikzpicture}[baseline=(A),outer sep=0pt,inner sep=0pt]
\node (A) at (0,0) {};

\twoeS{0}{0.7}{}{0.35};
\twoeSd{0}{0.7}{}{0.35};
\draw[line width=0.7pt, draw=black,dotted]  (0.35,0.15) --(0.35,-0.15) ;
\node[font=\tiny,align=center, fill=white, minimum size=2mm] at (0.35,0) {$a$};

\begin{scope}[rotate around={90:(0,0)}]
\twostr{-0.4}{0.2}{};
\draw[line width=0.7pt, draw=black,dotted]  (-0.1,0.4) -- (0.1,0.4) ;
\twostr{0.4}{0.2}{};
\end{scope}

\begin{scope}[shift={(0.7,0)}]
\begin{scope}[rotate around={-90:(0,0)}]
\twostr{-0.4}{0.2}{};
\draw[line width=0.7pt, draw=black,dotted]  (-0.1,0.4) -- (0.1,0.4) ;
\twostr{0.4}{0.2}{};
\end{scope}
\end{scope}

\end{tikzpicture}
\kern-0.5em
\gS{,}
\gS{\mathfrak{r}_a=}
\kern-0.5em
\begin{tikzpicture}[baseline=(A),outer sep=0pt,inner sep=0pt]
\node (A) at (0,0) {};

\twoeS{0}{0.7}{}{0.35};
\twoeSd{0}{0.7}{}{0.35};
\draw[line width=0.7pt, draw=black,dotted]  (0.35,0.15) --(0.35,-0.15) ;
\node[font=\tiny,align=center, fill=white, minimum size=2mm] at (0.35,0) {$a$};

\begin{scope}[rotate around={90:(0,0)}]
\twostr{-0.4}{0.2}{};
\draw[line width=0.7pt, draw=black,dotted]  (-0.1,0.4) -- (0.1,0.4) ;
\twostr{0.4}{0.2}{};
\end{scope}

\begin{scope}[shift={(0.7,0)}]
\begin{scope}[rotate around={-90:(0,0)}]
\twostr{-0.4}{0.2}{};
\draw[line width=0.7pt, draw=black,dotted]  (-0.1,0.4) -- (0.1,0.4) ;
\twostr{0.4}{0.2}{};
\end{scope}
\end{scope}

\staru{0.7}{0.5}{1};

\begin{scope}[rotate around={180:(0.7,-0.5)}]
\staru{0.7}{-0.5}{2};
\end{scope}
\twoeARB{0.7}{-0.5}{0.7}{0.5}{}{above=1pt};
\end{tikzpicture}
\kern-0.5em
\gS{, 3\leq a \leq 2p-1 \in 2\ZZ+1,}
\\
\gS{\mathfrak{s}_{abc}=}
\kern-0.5em
\begin{tikzpicture}[baseline=(A),outer sep=0pt,inner sep=0pt]
\node (A) at (0,0) {};

\twoeS{0}{0.7}{}{0.35};
\twoeSd{0}{0.7}{}{0.35};
\draw[line width=0.7pt, draw=black,dotted]  (0.35,0.15) --(0.35,-0.15) ;
\node[font=\tiny,align=center, fill=white, minimum size=2mm] at (0.35,0) {$a$};

\begin{scope}[rotate around={90:(0,0)}]
\twostr{-0.4}{0.2}{};
\draw[line width=0.7pt, draw=black,dotted]  (-0.1,0.4) -- (0.1,0.4) ;
\twostr{0.4}{0.2}{};
\end{scope}

\begin{scope}[shift={(0.7,0)}]
\begin{scope}[rotate around={-90:(0,0)}]
\twostr{-0.4}{0.2}{};
\draw[line width=0.7pt, draw=black,dotted]  (-0.1,0.4) -- (0.1,0.4) ;
\twostr{0.4}{0.2}{};
\end{scope}
\end{scope}

\staru{0.7}{0.5}{c};

\staru{0}{0.5}{b};
\twoeARB{0}{0}{0}{0.5}{}{above=1pt};
\twoeARB{0.7}{0}{0.7}{0.5}{}{above=1pt};
\end{tikzpicture}
\kern-0.5em
\gS{, 4\leq a \leq 2p \in 2\ZZ, bc \in \{11,12,22\},}
\\
\gS{\mathfrak{t}_a=}
\kern-0.5em
\begin{tikzpicture}[baseline=(A),outer sep=0pt,inner sep=0pt]
\node (A) at (0,0) {};

\twoeS{0}{0.7}{}{0.35};
\twoeSd{0}{0.7}{}{0.35};
\draw[line width=0.7pt, draw=black,dotted]  (0.35,0.15) --(0.35,-0.15) ;
\node[font=\tiny,align=center, fill=white, minimum size=2mm] at (0.35,0) {$a$};

\begin{scope}[rotate around={90:(0,0)}]
\twostr{-0.4}{0.2}{};
\draw[line width=0.7pt, draw=black,dotted]  (-0.1,0.4) -- (0.1,0.4) ;
\twostr{0.4}{0.2}{};
\end{scope}

\begin{scope}[shift={(0.7,0)}]
\begin{scope}[rotate around={-90:(0,0)}]
\twostr{-0.4}{0.2}{};
\draw[line width=0.7pt, draw=black,dotted]  (-0.1,0.4) -- (0.1,0.4) ;
\twostr{0.4}{0.2}{};
\end{scope}
\end{scope}

\staru{0.7}{0.5}{1};

\begin{scope}[rotate around={180:(0.7,-0.5)}]
\staru{0.7}{-0.5}{2};
\end{scope}
\twoeARB{0.7}{-0.5}{0.7}{0.5}{}{above=1pt};

\staru{0}{0.5}{1};

\begin{scope}[rotate around={180:(0,-0.5)}]
\staru{0}{-0.5}{2};
\end{scope}
\twoeARB{0}{-0.5}{0}{0.5}{}{above=1pt};
\end{tikzpicture}
\kern-0.5em
\gS{, 5\leq a \leq 2p-1 \in 2\ZZ+1.}
\end{center}

\end{proposition}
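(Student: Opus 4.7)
The plan is to mirror the strategy used for Propositions~\ref{prop:W1} and~\ref{prop:W2}, adapted to accommodate the symmetric tensor factor $V_3 = S^2V$. First, I would establish the list of generators. The $\SL_n$-invariants for $W_3$ can be extracted from~\cite[Le. 2.5]{shmel}, and minimality will again follow from~\cite[Proof of Thm. 0.2]{shmel}. The graph shapes listed in the proposition are the only irreducible ones of the correct weights; this can be verified by the same case analysis used in~\cite[Sec. 4--6]{braun} once jagged edges corresponding to the $S^2V$ summand are included. In particular, $\mathfrak{o}, \mathfrak{q}_1$ and the $\mathfrak{q}_a$ come from applying the $U^o$-covariants in Lemma~\ref{le:covL^2} to the zero part of $V_1$, while the generators $\mathfrak{n}_{ab}, \mathfrak{p}_a, \mathfrak{r}_a, \mathfrak{s}_{abc}, \mathfrak{t}_a$ arise by combining covariants from Lemmas~\ref{le:covL2+2Ln-1} and~\ref{le:covL^2} under the diagonal torus, exactly as in Lemmas~\ref{le:W1} and~\ref{le:W2}.

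Next, I would compute the contracted algebra $A_3 = (\CC[V_1]^U \otimes \CC[V_3]^{U^o})^T$ as in the earlier cases. Denoting by $x$ the images of the $\Cov(\SL_n, V_1)$ generators and by $y$ the images of the $\Cov(\SL_n, V_3)$ generators, a direct inspection of the $T$-weights (read off from~\cite[Table~1]{brionRed} and~\cite[Thm.~3]{brionIrred}) expresses $A_3$ as a tensor product of torus invariants of small linear actions. Each factor is either polynomial or a hypersurface generated by $2\times 2$-minors plus (at most) one trinomial carrying the relation from Lemma~\ref{le:covL2+2Ln-1}. By Theorem 0.2 of~\cite{panyushev}, the resulting $\ZZ^k$-multivariate Hilbert series coincides with that of $\CC[W_3]^{\SL_{2p+1}}$, so it provides the multidegrees of a minimal system of syzygies. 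Counting these factors against the Krull dimension gives the homological dimension $2p-2$. The worksheet for computing this multivariate Hilbert series (mentioned after the Remark concluding Section~\ref{sec:tech}) applied to $A_3$ will make this step concrete for small $p$; the parametric computation then follows by direct examination of the tensor factors.

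Then I would produce explicit syzygies by the graph method. For each degree class flagged in step two, I would exhibit a single invariant graph which is reducible \emph{in two different ways}: one reduction, obtained by Plücker-pulling a jagged edge onto a vertex already saturated by looping $2$-edges, yields a sum of products involving the $\mathfrak{s}_{abc}$ and $\mathfrak{t}_a$; the other, obtained by pulling the non-jagged $2$-edge first, yields a sum of products involving $\mathfrak{q}_a,\mathfrak{r}_a$ together with $\mathfrak{n}_{ab}$. Equating the two expansions produces a syzygy $\mathfrak{g}_k$ whose leading shape schematically reads
\[
\mathfrak{g}_k \;=\; \sum \mathfrak{s}_{abc}\,\mathfrak{n}_{de} \;+\; \sum \mathfrak{t}_a\,\mathfrak{p}_b \;+\; \sum \mathfrak{r}_a\,\mathfrak{q}_b \;+\; (\text{lower-weight corrections}),
\]
with $k$ running over the $2p-2$ multidegrees detected in step two. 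This is the step that parallels the construction of $\mathfrak{f}_k$ in Proposition~\ref{prop:W1} and $\mathfrak{g}_k$ in Proposition~\ref{prop:W2}.

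Finally, I would verify that the $\mathfrak{g}_k$ generate the full ideal of syzygies by showing that they form a Gröbner basis with respect to some monomial order, using Algorithm~\ref{algCS}. The first row of the grading matrix $\mathcal{M}$ makes the monomials involving $\mathfrak{s}_{abc}$ and $\mathfrak{t}_a$ dominant, thus reducing the problem to filtering monomials on counterdiagonals of an $\mathcal{S}_\nu$ matrix whose entries are indexed by the $(a,b,c)$-ranges of the $\mathfrak{s}_{abc}$ and by the $a$-range of the $\mathfrak{t}_a$. Choosing leading monomials inductively in $p$ - keeping the $p-1$ choices from the previous step and targeting the two new $\mathfrak{g}_k$ appearing in the $p$-th case along the outermost row and column of $\mathcal{S}_\nu$ - the same crosshair argument as in the proofs of Propositions~\ref{prop:W1} and~\ref{prop:W2} guarantees that each $\mathcal{S}_\nu$ filters out at least the leading monomial of the $\mathfrak{g}_k$ with highest remaining index. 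Since the leading monomials have pairwise disjoint supports, the $\mathfrak{g}_k$ constitute a Gröbner basis by the Buchberger criterion, whence $\CC[W_3]^{\SL_{2p+1}}$ is a complete intersection of the claimed homological dimension. The main obstacle I foresee is correctly bookkeeping the presign contributions coming from the jagged edges in the Plücker relations of~\cite[Prop.~10]{GRS}, together with verifying that the inductive choice of leading monomials indeed remains consistent once the $\mathfrak{r}_a,\mathfrak{s}_{abc},\mathfrak{t}_a$ with different parity ranges are mixed; this is more delicate than in the purely antisymmetric $W_1,W_2$ cases because the counterdiagonals in the sieve $\mathcal{S}_\nu$ now split according to the parity of $a$.
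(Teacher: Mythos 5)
Your overall architecture is the right one and matches the paper's: deform to $A_3$, read off multidegrees from the multivariate Hilbert series, produce syzygies by exhibiting graphs that decompose in two ways, and finish with the Crosshair-sieve. But there is a concrete gap in the middle two steps. The syzygies that actually arise are not of the mixed form $\sum \mathfrak{s}_{abc}\mathfrak{n}_{de}+\sum\mathfrak{t}_a\mathfrak{p}_b+\sum\mathfrak{r}_a\mathfrak{q}_b$ you propose; they are deformations of the \emph{determinants} of the $2\times 2$ blocks $h^{(k)}$ and $i^{(k)}$ of $A_3$. Concretely, for each odd $a=2k+1$ one graph decomposes in two ways whose difference contains $\mathfrak{r}_a^2$ and $\mathfrak{q}_a\mathfrak{t}_a$ (the contraction of $\det h^{(k)}$), and for each even $a=2k+2$ another graph yields $\mathfrak{s}_{a12}^2$ and $\mathfrak{s}_{a11}\mathfrak{s}_{a22}$ (the contraction of $\det i^{(k)}$). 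The presence of the \emph{square} monomials $\mathfrak{r}_a^2$ and $\mathfrak{s}_{a12}^2$ is exactly what makes the last step work: these targets sit on the diagonal of the sieve, while all other monomials of a given syzygy lie on the counterdiagonal through that diagonal entry. Hence $\mathcal{S}_1$ already filters out the two extreme squares $\mathfrak{r}_3^2$ and $\mathfrak{s}_{(2p)12}^2$, $\mathcal{S}_2$ the next two, and so on --- no induction on $p$ and no parity bookkeeping of the kind you worry about is needed. Your schematic $\mathfrak{g}_k$, having no square terms, does not supply leading monomials with this property, so the crosshair argument as you describe it would not obviously terminate.

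Two smaller points. First, the minimality of the generating set is not obtained here from \cite[Le.~2.5, Thm.~0.2]{shmel} as in the $W_1,W_2$ cases, but from \cite[Proof of Thm.~0.4]{shmel}: one starts from the multidegrees of the generators of $A_3$ and must then discard those candidates that are in fact reducible --- this happens for all entries of $g$ and for $h^{(1)}_{2,2}$, where the reducibility is checked by Pl\"ucker relations on two-edges. Omitting this reducibility check would leave a non-minimal list and a wrong homological dimension count. Second, the count $2p-2$ comes from the $p-1$ factors $h^{(k)}$ plus the $p-1$ factors $i^{(k)}$ (the trinomial relations and the minors of $f,g$ do not survive as generating syzygies of $\CC[W_3]^{\SL_{2p+1}}$ after the reducible generators are removed), so the identification of which relations of $A_3$ deform to generating syzygies has to be made precise rather than inferred from a raw factor count.
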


 \begin{lemma}\label{le:W3}
 Denote by $x$ the images of the $c$ in $\CC[V_1]^U$ and by $y$ those in $\CC[V_3]^{U^o}$.  The algebra  $A_3=(\CC[V_1]^U\otimes\CC[V_3]^{U^o})^T$ is minimally generated by $x^*_2,  y^\triangle_{2p+1}$ and the entries on and above the diagonal of the matrices
 \setlength{\arraycolsep}{0.1mm}
 \renewcommand{\arraystretch}{1.3}
$$
 f:=y^\triangle_1
 \cdot
 \begin{bmatrix}
{x^{(1)}_1}^2 & x^{(1)}_1 x^{(2)}_1   & x^{(1)}_1 x_p \\
x^{(1)}_1 x^{(2)}_1 & {x^{(2)}_1}^2 & x^{(2)}_1 x_p \\
x^{(1)}_1 x_p & x^{(2)}_1 x_p & x_p^2
 \end{bmatrix}
  ,
  \
g:=y^\triangle_2
 \cdot
 \begin{bmatrix}
{x^{(1)}_{p+1}}^2 & x^{(1)}_{p+1}x^{(2)}_{p+1} & x^{(1)}_{p+1}x_1^* \\
x^{(1)}_{p+1}x^{(2)}_{p+1} &  {x^{(2)}_{p+1}}^2 & x^{(2)}_{p+1} x_1^* \\
x^{(1)}_{p+1}x_1^* & x^{(2)}_{p+1} x_1^*  & {x_1^*}^2
 \end{bmatrix}
 ,
 $$
   $$
h^{(k)}:=
y^\triangle_{2k+1}
 \cdot
 \begin{bmatrix}
 x_{p-k}^2 &
 x_{p-k} x^*_{p+2-k} \\
 x_{p-k} x^*_{p+2-k} &  {x^*_{p+2-k}}^2
 \end{bmatrix}
,
k=1,\ldots,p-1,
$$
$$
 i^{(k)}:=
 y^\triangle_{2k+2}
 \cdot
  \begin{bmatrix}
 {x^{(1)}_{p+1-k}}^2 &
 x^{(1)}_{p+1-k} x^{(2)}_{p+1-k} \\
 x^{(1)}_{p+1-k} x^{(2)}_{p+1-k} & {x^{(2)}_{p+1-k}}^2
 \end{bmatrix}
 ,
 k=1,\ldots,p-1.
 $$
 The ideal of relations is generated  by all $2\times 2$-minors  of the matrices $f,g,h^{(k)}, i^{(k)}$ with at most one entry below the diagonal and in addition
$$
f_{a3}g_{b3}-p f_{a2}g_{1b} +p f_{1a} g_{2b}, \quad a,b \in \{1,2,3\}.
$$ 
 \end{lemma}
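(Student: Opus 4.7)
The plan is to proceed exactly as in Lemmas~\ref{le:W1} and~\ref{le:W2}, now using the polynomiality of $\CC[V_3]^{U^o}$ from Lemma~\ref{le:covL^2} in place of the non-polynomial structure of $\CC[V_1]^{U^o}$. The key new feature is that $V_3 = S^2 V$ is the symmetric square rather than an exterior power, so each polynomial generator $y^\triangle_k$ of $\CC[V_3]^{U^o}$ has weight that is effectively doubled compared to the exterior case. This is exactly why the matrices $f, g, h^{(k)}, i^{(k)}$ appear as symmetric rank-one matrices whose entries are \emph{quadratic} (rather than linear) monomials in the $x$-generators.

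First I would identify the images of all covariant generators under the isomorphisms $\Cov(\SL_n, V_1) \cong \CC[V_1]^U$ and $\Cov(\SL_n, V_3) \cong \CC[V_3]^{U^o}$. The $T$-weights of the $x$-generators are read off from~\cite[Table 1]{brionRed} and those of the $y^\triangle_k$ from~\cite[Thm.~3]{brionIrred}. To produce an element of $A_3$ one must pair each $y^\triangle_k$ with a product of exactly two $x$-generators whose weights sum to the negative of that of $y^\triangle_k$. Listing all such pairings produces precisely the entries of $f, g, h^{(k)}, i^{(k)}$; the two standalone $T$-invariants $x^*_2$ and $y^\triangle_{2p+1}$ fall out of this matching as the weight-zero generators of the individual factors. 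A routine multidegree count confirms minimality.

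Next I would verify the two families of relations. Each of $f, g, h^{(k)}, i^{(k)}$ is by construction an outer product of a vector with itself (scaled by some $y^\triangle_k$), hence has rank one, so all its $2\times 2$-minors vanish automatically; symmetry of these matrices reduces the list of independent minors to those with at most one entry strictly below the diagonal. The trinomial relations $f_{a3}g_{b3} - p f_{a2}g_{1b} + p f_{1a}g_{2b}$ are obtained by transporting the single covariant syzygy $c_p c^*_1 - p c^{(1)}_1 c^{(2)}_{p+1} + p c^{(2)}_1 c^{(1)}_{p+1} = 0$ of Lemma~\ref{le:covL2+2Ln-1} into $A_3$: multiplying it by $y^\triangle_1 y^\triangle_2$ and any appropriate pair of remaining $x$-factors yields exactly the nine listed relations as $a,b$ range over $\{1,2,3\}$.

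The main obstacle will be showing that the listed relations generate the whole ideal of syzygies; here the symmetric nature of $V_3$ means that the simpler toric argument available for $\BigWedge^{n-1}V$ does not apply verbatim. I would handle this by a multivariate Hilbert series comparison in the spirit of~\cite[Proof of Thm.~1.8]{ltpticr}: the $T$-weight decomposition of $A_3 = (\CC[V_1]^U \otimes \CC[V_3]^{U^o})^T$ produces an explicit lattice-point generating function, while the Hilbert series of the quotient of a polynomial ring by the listed binomial and trinomial relations can be computed by a Gröbner basis adapted to the rank-one structure of the matrices (placing the below-diagonal entries and the $f_{a3}, g_{b3}$ as leading monomials). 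Coincidence of the two series then forces the listed relations to generate the entire ideal.
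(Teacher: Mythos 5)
Your proposal is correct and follows essentially the same route as the paper: the paper's proof of this lemma simply says to proceed as in Lemmata~\ref{le:W1} and~\ref{le:W2}, i.e.\ read off the $T$-weights of the covariant generators from~\cite{brionRed} and~\cite{brionIrred}, deduce the $T$-invariant generators and the binomial (rank-one minor) plus trinomial (from the single covariant syzygy of Lemma~\ref{le:covL2+2Ln-1}) relations, in the manner of~\cite[Proof of Thm.~1.8]{ltpticr}. Your explicit weight-matching, rank-one observation, and Hilbert-series verification of completeness are exactly the details the paper leaves implicit.
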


 \begin{proof}
We proceed exactly as in Lemmata~\ref{le:W1} and~\ref{le:W2}.
\end{proof}

 \begin{proof}[Proof of Prop.~\ref{prop:W3}] We have 
\begin{align*}
 A_3 = 
\CC[x^*_2, y^\triangle_{2p+1}]  &  \otimes  \CC[y^\triangle_{1},y^\triangle_{2}, x^{(1)}_{1},x^{(2)}_{1},x_p,x^{(1)}_{p+1},x^{(2)}_{p+1},x^*_1]^T  \\
 & \otimes  \bigotimes_{k=1}^{p-1}\CC[y^\triangle_{2k+1}, x_{p-k},x^*_{p+2-k}]^T   \otimes \bigotimes_{k=1}^{p-1}\CC[y^\triangle_{2k+2}, x^{(1)}_{p+1-k},x^{(2)}_{p+1-k}]^T.
    \end{align*}
 With variables $t$, $s$ and $r_i$ corresponding to copies of $\BigWedge^2V$, $S^2V$ and $\BigWedge^{n-1}V$ respectively, the first two algebras and the $k$-th factors of the third and fourth algebra have respective Hilbert series 
$$
{\scriptstyle
\frac{1}{(1-tr_1r_2)(1-s^{2p+1})},
\frac {1-{s}^{3}{t}^{2p}r_{{1}}^{2}r_{{2}}^{2}}{ (1- s{t}^{2p})  (1- sr_{{2}}{t}^{p}) (1- sr_{{2}}^{2}) (1- sr_{{1}}{t}^{p})(1- sr_{{1}}r_{{2}}) (1- sr_{1}^{2})}
}
$$
$$
{\scriptstyle
\frac{1-s^{4k+2}t^{4p-4k+2}r_1^2r_2^2}{(1-s^{2k+1}t^{2p-2k})(1-s^{2k+1}t^{2p-2k+1}r_1r_2)(1-s^{2k+1}t^{2p-2k+2}r_1^2r_2^2)},}
$$
$$
{\scriptstyle
\frac{1-s^{4k+4}t^{4p-4k}r_1^2r_2^2}{(1-s^{2k+2}t^{2p-2k}r^2_1)(1-s^{2k+2}t^{2p-2k}r_1r_2)(1-s^{2k+2}t^{2p-2k}r^2_2)
}
}
$$
By~\cite[Proof of Thm. 0.4]{shmel}, a minimal system of generators has the same set of multidegrees as the system of generators of $A_3$ from Lemma~\ref{le:W3}, where those generators must be checked for reducibility and in case omitted, for which $A_3$ has a syzygy of smaller or equal multidegree. This is the case for all entries of $g$ and for $h^{(1)}_{2,2}$, where invariants having the same multidegree can be shown to be reducible by applying Pl\"ucker relations to two-edges. 
We arrive at the proposed minimal system of generators of $\CC[W_3]^{\SL_{2p+1}}$. Concerning the syzygies, the Hilbert series tells us to search for ones corresponding to $(1-s^{4k+2}t^{4p-4k+2}r_1^2r_2^2)$ and $(1-s^{4k+4}t^{4p-4k}r_1^2r_2^2)$. Consider the reducible graphs
\begin{center}
\begin{tikzpicture}[baseline=(A),outer sep=0pt,inner sep=0pt]
\node (A) at (0,-0.5) {};

\twoeS{0}{0.7}{}{0.35};
\twoeSd{0}{0.7}{}{0.35};
\draw[line width=0.7pt, draw=black,dotted]  (0.35,0.15) --(0.35,-0.15) ;
\node[font=\tiny,align=center, fill=white, minimum size=2mm] at (0.35,0) {$a$};

\begin{scope}[rotate around={90:(0,0)}]
\twostr{-0.4}{0.2}{};
\draw[line width=0.7pt, draw=black,dotted]  (-0.1,0.4) -- (0.1,0.4) ;
\twostr{0.4}{0.2}{};
\end{scope}

\begin{scope}[shift={(0.7,0)}]
\begin{scope}[rotate around={-90:(0,0)}]
\twostr{-0.4}{0.2}{};
\draw[line width=0.7pt, draw=black,dotted]  (-0.1,0.4) -- (0.1,0.4) ;
\twostr{0.4}{0.2}{};
\end{scope}
\end{scope}

\staru{0.7}{0.5}{2};

\staru{0}{0.5}{1};
\twoeARB{0}{0}{0}{0.5}{}{above=1pt};
\twoeARB{0.7}{0}{0.7}{0.5}{}{above=1pt};

\begin{scope}[shift={(0,-1)}]
\twoeS{0}{0.7}{}{0.35};
\twoeSd{0}{0.7}{}{0.35};
\draw[line width=0.7pt, draw=black,dotted]  (0.35,0.15) --(0.35,-0.15) ;
\node[font=\tiny,align=center, fill=white, minimum size=2mm] at (0.35,0) {$a$};

\begin{scope}[rotate around={90:(0,0)}]
\twostr{-0.4}{0.2}{};
\draw[line width=0.7pt, draw=black,dotted]  (-0.1,0.4) -- (0.1,0.4) ;
\twostr{0.4}{0.2}{};
\end{scope}

\begin{scope}[shift={(0.7,0)}]
\begin{scope}[rotate around={-90:(0,0)}]
\twostr{-0.4}{0.2}{};
\draw[line width=0.7pt, draw=black,dotted]  (-0.1,0.4) -- (0.1,0.4) ;
\twostr{0.4}{0.2}{};
\end{scope}
\end{scope}
\end{scope}

\begin{scope}[rotate around={180:(0.7,-1.5)}]
\staru{0.7}{-1.5}{1};
\end{scope}
\begin{scope}[rotate around={180:(0,-1.5)}]
\staru{0}{-1.5}{2};
\end{scope}
\twoeARB{0}{-1}{0}{-1.5}{}{above=1pt};
\twoeARB{0.7}{-1}{0.7}{-1.5}{}{above=1pt};

\twoeARB{0}{0}{0}{-1}{}{above=1pt};
\twoeARB{0.7}{0}{0.7}{-1}{}{above=1pt};
\end{tikzpicture}
\gS{,}
\begin{tikzpicture}[baseline=(A),outer sep=0pt,inner sep=0pt]
\node (A) at (0,-1) {};

\twoeS{0}{0.7}{}{0.35};
\twoeSd{0}{0.7}{}{0.35};
\draw[line width=0.7pt, draw=black,dotted]  (0.35,0.15) --(0.35,-0.15) ;
\node[font=\tiny,align=center, fill=white, minimum size=2mm] at (0.35,0) {$a$};

\begin{scope}[rotate around={90:(0,0)}]
\twostr{-0.4}{0.2}{};
\draw[line width=0.7pt, draw=black,dotted]  (-0.1,0.4) -- (0.1,0.4) ;
\twostr{0.4}{0.2}{};
\end{scope}

\begin{scope}[shift={(0.7,0)}]
\begin{scope}[rotate around={-90:(0,0)}]
\twostr{-0.4}{0.2}{};
\draw[line width=0.7pt, draw=black,dotted]  (-0.1,0.4) -- (0.1,0.4) ;
\twostr{0.4}{0.2}{};
\end{scope}
\end{scope}

\twoeARB{0}{0}{0}{-1}{}{above=1pt};
\twoeARB{0.7}{0}{0.7}{-1}{}{above=1pt};
\twoeARB{0}{-2}{0}{-1}{}{above=1pt};
\twoeARB{0.7}{-2}{0.7}{-1}{}{above=1pt};

\begin{scope}[shift={(0,-2)}]
\twoeS{0}{0.7}{}{0.35};
\twoeSd{0}{0.7}{}{0.35};
\draw[line width=0.7pt, draw=black,dotted]  (0.35,0.15) --(0.35,-0.15) ;
\node[font=\tiny,align=center, fill=white, minimum size=2mm] at (0.35,0) {$a$};

\begin{scope}[rotate around={90:(0,0)}]
\twostr{-0.4}{0.2}{};
\draw[line width=0.7pt, draw=black,dotted]  (-0.1,0.4) -- (0.1,0.4) ;
\twostr{0.4}{0.2}{};
\end{scope}

\begin{scope}[shift={(0.7,0)}]
\begin{scope}[rotate around={-90:(0,0)}]
\twostr{-0.4}{0.2}{};
\draw[line width=0.7pt, draw=black,dotted]  (-0.1,0.4) -- (0.1,0.4) ;
\twostr{0.4}{0.2}{};
\end{scope}
\end{scope}
\end{scope}

\staru{1.7}{-1}{1};

\staru{-1}{-1}{2};

\draw[line width=0.5pt, draw=black,fill=black]  (0,-1) .. controls (-0.2,-0.8) and (-0.3,-0.8) .. (-0.5,-1) ;
\draw[line width=0.5pt, draw=black,fill=black]  (0,-1) .. controls (-0.2,-1.2) and (-0.3,-1.2) .. (-0.5,-1) ;
\draw[line width=0.5pt, draw=black]  (0,-1) -- (-1,-1) node[font=\tiny,midway, above=3pt] {$\ivE{1}$};

\draw[line width=0.5pt, draw=black,fill=black]  (1.2,-1) .. controls (1,-0.8)  and (0.9,-0.8) .. (0.7,-1) ;
\draw[line width=0.5pt, draw=black,fill=black]  (0.7,-1) .. controls (0.9,-1.2) and (1,-1.2) .. (1.2,-1) ;
\draw[line width=0.5pt, draw=black]  (0.7,-1) -- (1.7,-1) node[font=\tiny,midway, above=3pt] {$\ivE{2}$};

\end{tikzpicture} 
\end{center}
for $a=2k+1$ in the left and $a=2k+2$ in the right one. For both types, there are \emph{two ways} to transform them into a sum of disconnected ones, so we get a syzygy for each of them. These two ways are the following: for the left one, consider the Pl\"ucker relation applied to the two-edges connecting the upper and lower part, but for the first way pulling both to the upper vertices and for the second way pulling one to the upper and one to the lower vertices. This results in two equivalent sums of reducible graphs,  containing $\mathfrak{r}_a^2$ and $\mathfrak{q}_a \mathfrak{t}_a$ respectively. We do not list the other summands but remark that this is exactly the part of the relation that reflects in the algebra $A_3$ as the determinant of $h^{(k)}$. 

For the right graph, consider for the first way the Pl\"ucker relation applied to the two-edges connecting the upper and middle part, and for the second way, which is more important for us, the Pl\"ucker relation applied to the horizontal non-looping $2p$-edges, both pulled towards the middle. The resulting syzygy contains - resulting from the second way - $\mathfrak{s}_{a12}^2$ and $\mathfrak{s}_{a11}\mathfrak{s}_{a22}$, which again are exactly the part of the relation reflecting in $A_3$, but now as the determinant of $i^{(k)}$. 

Finally, we apply the Crosshair-sieve-algorithm~\ref{algCS} to show that $\CC[W_3]^{\SL_n}$ is a complete intersection. This is done by choosing the $\mathfrak{r}_a^2$ and $\mathfrak{s}_{a12}^2$ as leading monomials. The  sieves here can be viewed as part-matrices up and above the diagonal,  containing in the $(i,j)$-th entry the degree of $\mathfrak{r}_{2+i}\mathfrak{r}_{2+j}$, $\mathfrak{r}_{2+i}\mathfrak{s}_{(2+j)12}$, $\mathfrak{s}_{(2+i)12}\mathfrak{r}_{2+j}$ and $\mathfrak{s}_{(2+i)12}\mathfrak{s}_{(2+j)12}$ for $(i,j) \in \{(2\ZZ+1)^2,(2\ZZ+1)\times 2\ZZ, 2\ZZ \times (2\ZZ+1), 2\ZZ^2\}$ respectively.
By the multidegrees of the syzygies, monomials that may occur in one relation lie on a counterdiagonal containing an entry on the diagonal (corresponding to a square monomial), as is shown in the following example picture:
\begin{center}
\begin{tikzpicture}
\node[matrix,  nodes={fill=white,draw,minimum size=2mm},row sep=-\pgflinewidth,column sep=-\pgflinewidth] (Leva) at (0,0)
{
\node(a1) {}; & \node (a12) {}; & \node (a13) {}; & \node (a14) {}; & \node (a15) {}; \\
  & \node(a22) {}; & \node (a23) {}; & \node (a24) {}; & \node (a25) {};   \\
& & \node(a33) {};  & \node {}; & \node (a35) {}; \\
&& & \node (a44) {}; & \node (a12) {}; \\
&&&& \node (a55) {}; \\
};
\draw[line width=3pt, draw=black,opacity=0.5,cap=round]  (a1.center) --(a1.center)  ;
\draw[line width=3pt, draw=black,opacity=0.5,cap=round]  (a22.center)--(a13.center) ;
\draw[line width=3pt, draw=black,opacity=0.5,cap=round]  (a33.center)  --(a15.center)  ;
\draw[line width=3pt, draw=black,opacity=0.5,cap=round]  (a44.center)--(a35.center) ;
\draw[line width=3pt, draw=black,opacity=0.5,cap=round]  (a55.center)--(a55.center) ;
\end{tikzpicture}
\end{center}
Now as the targeted monomials lie on the diagonal, $\mathcal{S}_1$ filters out $\mathfrak{r}_3^2$ and $\mathfrak{s}_{(2p)12}^2$, then $\mathcal{S}_2$ filters out $\mathfrak{s}_{412}^2$ and $\mathfrak{r}_{(2p-1)}^2$ et cetera. So we found a Gr\"obner basis for the ideal of relations of $\CC[W_3]^{\SL_n}$ and it is a complete intersection. 

\end{proof}

\end{document}